\setlist[enumerate,1]{label={\upshape(\roman*)}}
\setlist[enumerate,2]{label={\upshape(\alph*)}}
\theoremstyle{plain}
\newtheorem{theorem}{Theorem}[section]
\newtheorem{lemma}[theorem]{Lemma}
\newtheorem{proposition}[theorem]{Proposition}
\theoremstyle{definition}
\newtheorem{definition}[theorem]{Definition}
\newtheorem{remark}[theorem]{Remark}
\newcommand{\term}[1]{\emph{#1}} 
\newcommand{\defeq}{=_{df}} 
\newcommand{\A}{\mathcal{A}}
\newcommand{\B}{\mathcal{B}}
\newcommand{\C}{\mathcal{C}}
\newcommand{\D}{\mathcal{D}}
\newcommand{\E}{\mathcal{E}}
\newcommand{\F}{\mathcal{F}}
\newcommand{\dimdrop}{\D_{d.d.}} 
\newcommand{\Z}{\mathcal{Z}} 
\newcommand{\K}{\mathcal{K}} 
\newcommand{\Mul}{\mathcal{M}} 
\newcommand{\paschkedual}[2]{#1^d_{#2}}
\numberwithin{equation}{section}
\DeclareMathOperator{\diam}{diam}
\DeclareMathOperator{\Ped}{Ped}
\DeclareMathOperator{\Ideal}{Ideal}
\DeclareMathOperator{\her}{her}
\DeclareMathOperator{\supp}{supp}
\DeclareMathOperator{\osupp}{osupp}
\DeclareMathOperator{\dist}{dist}
\DeclareMathOperator{\trace}{tr}
\DeclareMathOperator{\ind}{ind}
\DeclareMathOperator{\id}{id}
\DeclareMathOperator{\Aut}{Aut} 
\DeclareMathOperator{\spec}{sp} 
\newcommand{\cstar}{\textrm{C}*\kern-0.15ex} 
\newcommand{\paue}{\approxeq} 
\newcommand{\nat}{\mathbb{N}} 
\newcommand{\ints}{\mathbb{Z}} 
\newcommand{\real}{\mathbb{R}} 
\newcommand{\complex}{\mathbb{C}} 
\newcommand{\mat}{M} 
\renewcommand{\k}{K} 
\newcommand{\kk}{KK} 
\newcommand{\kkz}{KK^0} 
\newcommand{\kz}{K_0} 
\newcommand{\ko}{K_1} 
\newcommand{\kkhig}{\kk_{\mathrm{Higson}}}
\newcommand{\set}{\delim\{\}}
\newcommand{\ec}{\delimpair[{[.]:}]}
\title[Paschke dual algebra]{$\ko$-injectivity of the Paschke dual algebra, \\ and uniqueness}
\author{Jireh Loreaux}
\address{Department of Mathematics and Statistics\\
Southern Illinois University Edwardsville\\
1 Hairpin Dr.\\
Edwardsville, IL\\
62026-1653\\ USA}
\email{jloreau@siue.edu}
\author{P. W. Ng}
\address{Department of Mathematics\\
University of Louisiana at Lafayette\\
217 Maxim Doucet Hall\\
P. O. Box 43568\\
Lafayette, Louisiana\\
70504--3568\\
USA}
\email{png@louisiana.edu}
\author{Arindam Sutradhar}
\address{Department of Mathematics\\
University of Louisiana at Lafayette\\
217 Maxim Doucet Hall\\
P. O. Box 43568\\
Lafayette, Louisiana\\
70504--3568\\
USA}
\email{arindam.sutradhar1@louisiana.edu}
\begin{document}

\maketitle

\begin{abstract}
  We prove that a large class of Paschke dual algebras of simple unital \cstar-algebras are $\ko$-injective.
  As a consequence, we obtain interesting $\kk$-uniqueness theorems which generalize the Brown--Douglas--Fillmore essential codimension property.
\end{abstract}

\section{Introduction}

In their seminal paper \cite[Remark~4.9]{BDF-1973-PoaCoOT}, Brown, Douglas and Fillmore
(BDF)  classified essentially normal operators using Fredholm indices.
In the course of proving functorial properties of their homology $\operatorname{Ext}(X)$, BDF introduced
the \term{essential codimension} for a pair of projections $P,Q \in B(\ell_2)$ whose
difference is compact.

Since that time, the concept of essential codimension has had numerous applications including the computation of spectral flow in semifinite von Neumann algebras \cite{BCP+-2006-Agatoeo}, as well as the explanation \cite{KL-2017-IEOT,Lor-2019-JOT} of unexpected integers arising in strong sums of projections \cite{KNZ-2009-JFA}, diagonals of projections \cite{Kad-2002-PNASUa}, and diagonals of normal operators with finite spectrum \cite{Arv-2007-PNASU,Jas-2013-JFA,BJ-2015-TAMS}.

We here present the definition of essential codimension in $B(\ell_2)$, but the translation to arbitrary semifinite factors is straightforward.

\begin{definition}
  \label{def:standard-essential-codimension}
  Given projections $P,Q \in B(\ell_2)$ with $P - Q \in \K$, the \term{essential codimension of $Q$ in $P$} is any of the following equal quantities:
  \begin{align*}
    \ec{P}{Q} &\defeq
                \begin{cases}
                  \trace(P) - \trace(Q) & \text{if}\ \trace(P) + \trace(Q) < \infty, \\[0.5em]
                  \ind(V^*W) &
                  \parbox[t]{28ex}{%
                    if $\trace(P) = \trace(Q) = \infty,$ \\
                    where $V^*V = W^*W =1,$ \\
                    $VV^* = Q, WW^* = P,$%
                  }
                \end{cases}
  \end{align*}
  where $\ind$ denotes the Fredholm index.
  We note that when $Q \le P$, $\ec{P}{Q}$ coincides with the usual codimension of $Q$ in $P$.
  Note also that $\ec{P}{Q} \in \kz(\K)$.
\end{definition}

The fundamental property introduced by
Brown--Douglas--Fillmore is encapsulated in the following theorem, whose proof extends easily to semifinite factors (see \cite{BL-2012-CJM} or \cite{KL-2017-IEOT}).

\begin{theorem}
  \label{thm:bdf-unitary-equivalence-semifinite-factors}
  If $P,Q \in B(\ell_2)$ are projections with $P-Q \in \K$, then $\ec{P}{Q} = 0$ if and only if there is a unitary $U \in 1 + \K$ conjugating $P$ to $Q$, i.e., $UPU^{*} = Q$.
\end{theorem}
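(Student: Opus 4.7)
The plan is to prove the two implications separately: the forward direction is a direct trace/index calculation, while the converse splits into the finite- and infinite-trace cases, the latter being the substantial one.

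For $(\Leftarrow)$, suppose $UPU^* = Q$ with $U \in 1+\K$. If $\trace P, \trace Q < \infty$, then cyclicity of the trace gives $\trace Q = \trace(UPU^*) = \trace P$, so $\ec{P}{Q} = 0$. If $\trace P = \trace Q = \infty$, pick an isometry $W$ with $WW^* = P$ and set $V := UW$; then $V^*V = 1$, $VV^* = Q$, and $V^*W = W^*U^*W = 1 + W^*(U^* - 1)W$ is a compact perturbation of $1$, hence Fredholm of index zero.

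For the converse in the finite-trace case, $\ec{P}{Q} = \trace P - \trace Q = 0$ forces $P, Q$ to have the same finite rank; setting $H_0 := P\Hil + Q\Hil$ (finite-dimensional), pick any unitary on $H_0$ sending $P|_{H_0}$ to $Q|_{H_0}$ and extend by the identity on $H_0^\perp$. The resulting $U \in B(\Hil)$ satisfies $U - 1$ of finite rank and $UPU^* = Q$.

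The infinite-trace converse is the heart of the proof. I would work with the auxiliary operator $X := QP + (1-Q)(1-P)$, noting that $X - 1 = -(1-Q)P - Q(1-P) \in \K$ (by compactness of $P - Q$) and that $X$ intertwines via $XP = QX$. Then $X^*X$ commutes with $P$ and is a compact perturbation of $1$, so $|X| - 1 \in \K$ by continuous functional calculus. A direct computation shows $\ker X = K_1 \oplus K_2$ and $\ker X^* = K_2 \oplus K_1$, where $K_1 := P\Hil \cap (1-Q)\Hil$ and $K_2 := Q\Hil \cap (1-P)\Hil$. Since $P - Q$ is compact self-adjoint with $\pm 1$ of finite multiplicity, and $K_1 \subseteq \ker(P - Q - 1)$, $K_2 \subseteq \ker(P - Q + 1)$, both $K_1, K_2$ are finite-dimensional; the hypothesis $\ec{P}{Q} = 0$ translates, via $\ec{P}{Q} = \ind(QP|_{P\Hil \to Q\Hil}) = \dim K_1 - \dim K_2$, to $\dim K_1 = \dim K_2$. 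Extending the partial isometry $U_X$ in the polar decomposition $X = U_X|X|$ by any unitaries $K_1 \to K_2$ and $K_2 \to K_1$ produces a global unitary $U$; the intertwining $XP = QX$ together with $P|X| = |X|P$ yields $UPU^* = Q$, while $U - 1 \in \K$ follows because on $(\ker X)^\perp$ we have $U_X - 1 = (X - |X|)|X|^{-1} \in \K$ (using $X - 1, |X| - 1 \in \K$) and the projection onto $\ker X = K_1 \oplus K_2$ is finite-rank. The main obstacle is this final bookkeeping: the extension across the defect subspaces must simultaneously preserve $UPU^* = Q$ and keep $U - 1$ compact; both are possible exactly because $K_1, K_2$ are finite-dimensional (compactness of $P - Q$) and of equal dimension ($\ec{P}{Q} = 0$).
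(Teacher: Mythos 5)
The paper does not actually supply a proof of this theorem: it simply cites \cite{BL-2012-CJM} and \cite{KL-2017-IEOT} for the semifinite-factor version and ultimately defers to \cite{BDF-1973-PoaCoOT}. So there is no in-paper argument to compare against; your proof has to be judged on its own, and it checks out. You use the by-now-standard device (in the spirit of Avron--Seiler--Simon and of Kato's treatment of pairs of projections): form $X = QP + (1-Q)(1-P)$, observe $X - 1 \in \K$, $XP = QX$, $[X^{*}X, P] = 0$, compute $\ker X = K_1 \oplus K_2$ and $\ker X^{*} = K_2 \oplus K_1$ with $K_1 = P\Hil \cap (1-Q)\Hil$, $K_2 = Q\Hil \cap (1-P)\Hil$, and identify $\ec{P}{Q}$ with $\ind\bigl(QP|_{P\Hil \to Q\Hil}\bigr) = \dim K_1 - \dim K_2$ before extending the polar partial isometry across the finite-dimensional defect spaces. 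All the intermediate claims are correct, including the forward direction (trace cyclicity, resp.\ $V^{*}W = 1 + W^{*}(U^{*}-1)W$) and the finite-rank case.

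Two small points worth tightening in a written version. First, the identity $\ec{P}{Q} = \ind\bigl(QP|_{P\Hil \to Q\Hil}\bigr)$ deserves a one-line justification (e.g.\ $V^{*}W = V^{*}QPW$ since $V^{*} = V^{*}Q$ and $PW = W$, with $V^{*}|_{Q\Hil}$ and $W$ unitaries, so the Fredholm indices agree). Second, when you write $U_X - 1 = (X - |X|)|X|^{-1}$ on $(\ker X)^{\perp}$, you are implicitly using that $|X|$ is bounded below on $(\ker X)^{\perp}$; this is true, but only because $|X| - 1 \in \K$ forces $\spec_{\mathrm{ess}}(|X|) = \{1\}$, so $0$ is at worst an isolated eigenvalue of finite multiplicity. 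With that spelled out the argument is complete. Finally, the requirement that the extension send $K_1$ to $K_2$ and $K_2$ to $K_1$ (rather than being an arbitrary unitary $\ker X \to \ker X^{*}$) is indeed forced by the intertwining $UP = QU$, and is exactly where $\dim K_1 = \dim K_2$, i.e.\ $\ec{P}{Q} = 0$, is used --- you have this right.
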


It has been recognized that essential codimension can
be realized as an element of $\kkz(\complex,\K)$ (or $\kkz(\complex,\complex)$) where one identifies $\ec{P}{Q}$ with the equivalence class $[\phi,\psi]$ where $\phi,\psi : \complex \to B(\ell_2)$ are *-homomorphisms with $\phi(1) = P$ and $\psi(1) = Q$.
This leads to a natural generalization of essential codimension to the setting $\kkz(\A,\B)$ where $\A$ is a separable nuclear \cstar-algebra and $\B$ is separable stable \cstar-algebra,
and thus, uniqueness results which generalize
\Cref{thm:bdf-unitary-equivalence-semifinite-factors}
(e.g., see \cite{Lee-2011-JFA}; see also, \cite{BL-2012-CJM},
\cite{Lin-2002-JOT}, \cite{DE-2001-KT},
\cite{LN-2020-IEOT}.)

It turns out that a sufficient condition for such generalizations
is the $\ko$-injectivity of the Paschke dual algebra $\paschkedual{\A}{\B}$ (see \Cref{thm:MainUniqueness}), which is a key ingredient in Paschke duality
$\kk^j(\A, \B) \cong \k_{j+1}(\paschkedual{\A}{\B})$ for $j = 0,1$
(e.g., see \cite{Pas-1981-PJM}, \cite{Tho-2001-PAMS},
\cite{Val-1983-PJM}).
Recall that a unital \cstar-algebra $\C$ is \term{$\ko$-injective} if the natural map from $\mathcal{U}(\C)/\mathcal{U}_0(\C)$ to $K_1(\C)$ is injective.

$\ko$-injectivity of the Paschke dual algebra is, in itself, an interesting question.  For example, in the case where $\A$ is unital, the Paschke dual algebra is properly infinite (e.g., see \Cref{lem:Jan1520216AM}), and it is an interesting open question of Blanchard,
Rohde and R\o{}rdam whether every properly infinite unital \cstar-algebra is $\ko$-injective (\cite[Question~2.9]{BRR-2008-JNG}).
Consequently, we focus attention on determining conditions on $\A$ and $\B$ which guarantee that the Paschke dual algebra is $\ko$-injective.

In this paper we continue the investigation \cite{LN-2020-IEOT} by the first and second named authors (itself in the spirit of \cite{Lee-2011-JFA,BL-2012-CJM,Lee-2013-JFA,Lee-2018-JMAA}) to obtain generalizations of \Cref{thm:bdf-unitary-equivalence-semifinite-factors} in various contexts.
One such generalization from \cite{LN-2020-IEOT} was:

\begin{theorem}[\protect{\cite[Theorem~3.5]{LN-2020-IEOT}}]
  Let $\A$ be a unital separable simple nuclear \cstar-algebra, and $\B$ a separable simple stable \cstar-algebra with a nonzero projection, strict comparison of positive elements and for which $T(\B)$ has finitely many extreme points.
  Suppose that there exists a *-embedding $\A \hookrightarrow \B$.

  Let $\phi,\psi : \A \to \Mul(\B)$ be unital trivial full extensions such that $\phi(a) - \psi(a) \in \B$ for all $a \in \A$.
  Then $[\phi,\psi] = 0$ in $\kk(\A,\B)$ if and only if $\phi,\psi$ are properly asymptotically unitarily equivalent.
  \label{thm:Jan152021Restrictive}
\end{theorem}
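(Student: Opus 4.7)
The reverse implication is a standard KK-theoretic fact: a continuous path $(u_t)_{t \ge 0}$ of unitaries in $1+\B$ implementing $\phi \to \psi$ asymptotically yields an operator homotopy of Cuntz pairs from $(\phi,\psi)$ to the degenerate pair $(\psi,\psi)$, so $[\phi,\psi] = 0$ in $\kkz(\A,\B)$.

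For the forward direction, my plan is to proceed in two main stages. First, I would apply an Elliott--Kucerovsky-style absorption theorem: because $\phi$ and $\psi$ are unital, trivial, and full extensions of the separable simple nuclear unital \cstar-algebra $\A$ by the separable simple stable \cstar-algebra $\B$ with strict comparison and $T(\B)$ having finitely many extreme points, every trivial full extension of $\A$ by $\B$ is absorbed by both $\phi$ and $\psi$. In particular, $\phi$ and $\psi$ are approximately unitarily equivalent by some sequence $(u_n)$ of unitaries in $\Mul(\B)$. Using $\phi(a) - \psi(a) \in \B$, the images $\pi(u_n) \in \Mul(\B)/\B$ commute with $\pi\circ\phi = \pi\circ\psi$, so a distinguished unitary built from the $u_n$'s --- for instance a telescoping product $u_{n+1}^* u_n$ viewed in the Paschke dual algebra $\paschkedual{\A}{\B}$ --- represents, under Paschke duality $\kkz(\A,\B) \cong \ko(\paschkedual{\A}{\B})$, precisely the class $[\phi,\psi]$.

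Second, the hypothesis $[\phi,\psi] = 0$ then says this distinguished unitary is $\ko$-trivial. Invoking $\ko$-injectivity of $\paschkedual{\A}{\B}$ (the central theme of the present paper), the unitary lies in the connected component of $1$ in $\mathcal{U}(\paschkedual{\A}{\B})$; lifting a continuous path of unitaries connecting it to the identity, and splicing such lifts across the successive approximation stages while controlling convergence on a countable dense subset of $\A$, produces a norm-continuous family $(u_t)_{t \ge 0}$ of unitaries in $1+\B$ with $(\mathrm{Ad}\, u_t)\circ\phi \to \psi$ pointwise.

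The main obstacle is twofold. First, one must actually establish $\ko$-injectivity of $\paschkedual{\A}{\B}$ under these hypotheses: this is the central question of the present paper, and a special case of the Blanchard--Rohde--R\o{}rdam problem. Second, even once $\ko$-injectivity is in hand, the splicing step requires a Cauchy-type reparametrization argument arranging both uniform convergence on a countable dense subset of $\A$ and membership in $1+\B$ at every time $t$. The trivial/full hypotheses on the extensions and the assumptions on $\B$ are precisely what are needed to make both of these achievable.
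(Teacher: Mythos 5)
Your overall route does follow the same skeleton as the paper's \Cref{thm:MainUniqueness} (which is what makes this theorem, cited from \cite{LN-2020-IEOT}, go through once one knows $\ko$-injectivity of $\paschkedual{\A}{\B}$): reduce the problem to showing that a unitary in the Paschke dual algebra, whose $\ko$-class corresponds to $[\phi,\psi]$ under Paschke duality, is connected to the identity, and then intertwine. You also correctly identify the two genuine bottlenecks --- establishing $\ko$-injectivity, and the final asymptotic-intertwining step. However, two pieces of the implementation are either wrong or too vague to carry the argument.

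First, the ``distinguished unitary.'' A telescoping term $u_{n+1}^*u_n$ does \emph{not} represent $[\phi,\psi]$; if anything, such a product measures the \emph{difference} between consecutive stages and should be trivial in $\ko(\paschkedual{\A}{\B})$, not equal to the class you want. What actually carries $[\phi,\psi]$ is a single conjugating unitary. The mechanism, as in the proof of \Cref{thm:MainUniqueness}, is: by \cite[Lemma~3.3]{LN-2020-IEOT} one obtains a norm-continuous path $\{u_t\}_{t\ge 0}$ of unitaries in $\Mul(\B)$ with $u_t\phi(a)u_t^* - \psi(a) \in \B$ for all $t$ and $\norm{u_t\phi(a)u_t^*-\psi(a)}\to 0$; since $\pi\circ\phi = \pi\circ\psi$ this forces $\pi(u_t)\in\paschkedual{\A}{\B}$, and then one passes to Higson's picture of $\kk$ and uses \cite[Lemma~2.3]{Hig-1987-PJM} plus Thomsen's Paschke duality isomorphism $\ko(\paschkedual{\A}{\B})\cong\kkhig(\A,\B)$ to identify $[\pi(u_0)]$ with $[\phi,\phi,u_0^*] = [\phi,u_0\phi u_0^*,1] = [\phi,\psi]$. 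Your sketch has none of this bookkeeping, and without it the identification of a $\ko$-class with $[\phi,\psi]$ is not justified.

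Second, the ``splicing.'' The honest difficulty is not merely a Cauchy-type reparametrization: after $\ko$-injectivity gives a unitary $v\in\complex 1 +\B$ with $v^*u_0\sim_h 1$ inside $\pi^{-1}(\paschkedual{\A}{\B})$, one has a uniformly continuous path of automorphisms $\alpha_t = \mathrm{Ad}(v^*u_t)$ of $\phi(\A)+\B$ with $\alpha_0 = \id$, and one must invoke an asymptotic-unitary-equivalence theorem for such automorphism paths --- concretely \cite[Proposition~2.15]{DE-2001-KT} or \cite[Theorems~3.2 and~3.4]{Lin-2002-JOT} --- to produce inner approximants $v_t\in\phi(\A)+\B$ with $v_0=1$, and then unwind $vv_t = \phi(a_t)+b_t$ to extract the path $w_t = vv_t\phi(a_t)^*\in 1+\B$. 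Your phrasing suggests an ad hoc gluing across approximation stages, which won't automatically land the $w_t$ in $\complex 1 +\B$ and won't control the intertwining error uniformly. So the plan is directionally right, but those two lemmas (the Paschke-duality identification and the Dadarlat--Eilers automorphism intertwining) are exactly the missing ingredients.
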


See \Cref{def:proper-asymptotic-unitary-equivalence} for the definition of proper asymptotic unitary equivalence.
Of course, the hypotheses here are quite restrictive, and in this paper we remove many of these restraints by proving:

\begin{restatable*}{theorem}{maintheorem}
  \label{thm:main-theorem}
  Let $\A$, $\B$ be separable simple \cstar-algebras, with $\A$ unital and nuclear and $\B$ stable and $\Z$-stable.
  Let $\phi,\psi : \A \to \Mul(\B)$ be unital trivial full extensions such that $\phi(a) - \psi(a) \in \B$ for all $a \in \A$.
  Then $[\phi,\psi] = 0$ in $\kk(\A,\B)$ if and only if $\phi,\psi$ are properly asymptotically unitarily equivalent.
\end{restatable*}

As previously mentioned, the key to establishing this result, as in \cite{LN-2020-IEOT}, is to prove that the Paschke dual algebra $\paschkedual{\A}{\B}$ --- the relative commutant in the corona algebra $\C(\B)$ of the image of $\A$ under a strongly
unital trivial absorbing extension --- is $\ko$-injective.
To this end, in \Cref{sec:paschke-duals-uniq}, we fix some notation on extension theory  and the Paschke dual algebra $\paschkedual{\A}{\B}$ (see \Cref{def:paschke-dual}).
We review and slightly improve upon the properties of the Paschke dual algebra developed in \cite{LN-2020-IEOT}, many of which are generalizations of results from Paschke's original paper which was for the case when $\B = \K$.
\Cref{prop:paschke-dual-purely-infinite} guarantees that $\paschkedual{\A}{\B}$ is purely infinite when $\A$ is unital separable simple nuclear and either $\B = \K$ or $\B$ is separable stable simple purely infinite.
We conclude \Cref{sec:paschke-duals-uniq} with \Cref{thm:k1-injectivity-generic}, which is an abstract condition  on the Paschke dual algebra $\paschkedual{\A}{\B}$ sufficient to guarantee $\ko$-injectivity;
this is the main tool used in \Cref{sec:paschke-dual-simple}.
This section serves as a short summary of Paschke dual algebras, and their general properties and consequences, and as a reference for the future.

In \Cref{sec:paschke-dual-simple}, we prove that the Paschke dual algebra $\paschkedual{\A}{\B}$ is $\ko$-injective when $\A,\B$ are separable and simple with $\A$ unital nuclear and $\B$ stable and $\Z$-stable (see \Cref{lem:PIK1Injective} and \Cref{thm:k1-injectivity-simple-nuclear-strict-comparison}).
This section is quite technical.
This leads to the uniqueness result (\Cref{thm:main-theorem}).
We also have results where the $\Z$-stability of $\B$ is replaced by the hypotheses of strict comparison and that the trace space $T(\B)$ has finitely many extreme points (\Cref{thm:Apr2720218AM} and \Cref{thm:uniqueness-finite-extreme-boundary}).
The techniques in this section share many key ideas with those from \cite{LN-2020-IEOT}, but with nontrivial modifications.

In this paper, we assume that for separable simple \cstar-algebras every quasitrace is a trace.

The reader of this paper should be familiar with many aspects of \cstar-algebra theory and $\k$-theory, especially extension theory and $\kk$-theory.
It would be helpful if the reader was also knowledgeable of the basic ideas from the modern theory of simple nuclear \cstar-algebras.
The reader should be comfortable with the concepts and computations in the following books and the references therein:
\cite{Dav-1996,Weg-1993,Bla-1998,JT-1991,Lin-2001,RLL-2000}.
As examples, here are some concepts used in this paper: absorbing extensions, Busby invariant, Brown--Douglas--Fillmore theory, essential codimension, $\kk$-groups, $\ko$-injectivity, multiplier and corona algebras, properly infinite projections, Jiang--Su stability, strict comparison, cancellation of projections, nuclearity.

\section{Paschke duals and uniqueness}
\label{sec:paschke-duals-uniq}

We begin by fixing some notation and recalling some basic facts from extension theory.
More detailed references for extension theory can be found in \cite{Weg-1993,Bla-1998}.
Given a \cstar-algebra $\B$, we let $\Mul(\B)$ denote the multiplier algebra of $\B$, and $\C(\B) \defeq \Mul(\B)/\B$ denote the corona algebra of $\B$.
Recall that given an extension of \cstar-algebras
\begin{equation*}
  0 \to \B \to \mathcal{E} \to \A \to 0,
\end{equation*}
one associates the \term{Busby invariant}, which is a *-homomorphism $\phi : \A \to \C(\B)$.
Conversely, given such a *-homomorphism $\phi : \A \to \C(\B)$, one can obtain an extension of $\B$ by $\A$ whose Busby invariant is $\phi$.
It is well-known that the extension corresponding to a given Busby invariant is unique up to \term{strong isomorphism} (in the terminology of Blackadar; see \cite[15.1--15.4]{Bla-1998}).
Our results are all invariant under strong isomorphism, and therefore we will simply refer to $\phi : \A \to \C(\B)$ as an extension.

A \term{trivial extension} is one for which the short exact sequence is split exact, or equivalently, the Busby invariant factors through the quotient map $\pi : \Mul(\B) \to \C(\B)$ via a *-homomorphism $\phi_0 : \A \to \Mul(\B)$ so that $\phi = \pi \circ \phi_0$.
If $\phi : \A \to \C(\B)$ is a trivial extension, then by a slight abuse of terminology we also refer to the map $\phi_0$ as a trivial extension.
When $\A$ is a unital \cstar-algebra and $\phi$ is a unital *-homomorphism, we say that $\phi$ is a \term{unital extension}.
If, in addition, $\phi$ is trivial and some lift $\phi_0$ is unital, $\phi$ is said to be a \term{strongly unital trivial extension}.
When $\phi$ is injective, then $\phi$ is called an \term{essential extension}.  This corresponds to $\B$ being an essential ideal of $\E$.     Two extensions $\phi, \psi : \A \rightarrow \C(\B)$ are said to be \term{unitarily equivalent} (and denoted $\phi \sim \psi$) if there exists a unitary
$U \in \Mul(\B)$ such that
\begin{equation*}
  \pi(U) \phi(\cdot) \pi(U)^* = \psi(\cdot).
\end{equation*}
(In the literature, unitary equivalence is sometimes called \term{strong unitary equivalence}.)

Suppose that $\phi_0, \psi_0 : \A \rightarrow \Mul(\B)$ are *-homomorphisms.  We will sometimes write $\phi_0 \sim \psi_0$
to mean $\pi \circ \phi_0 \sim \pi \circ \psi_0$, i.e., that the corresponding extensions are unitarily equivalent.

When $\B$ is stable, there are isometries $V,W \in \Mul(\B)$ for which $VV^{*} + WW^{*} = 1$.
Given extensions $\phi,\psi : \A \to \C(\B)$, their \term{BDF sum} is the extension
\begin{equation*}
  (\phi \oplus \psi)(\cdot) \defeq \pi(V) \phi(\cdot) \pi(V^{*}) + \pi(W) \psi(\cdot) \pi(W^{*}).
\end{equation*}
It is not hard to see that the BDF sum is well-defined up to unitary equivalence (i.e., up to $\sim$).
An extension $\phi : \A \to \C(\B)$ is \term{absorbing} (respectively, \term{unitally absorbing}) if
\begin{equation*}
  \phi \sim \phi \oplus \psi
\end{equation*}
for any (respectively, \term{strongly unital}) trivial extension $\psi$.
Note that since any absorbing extension must absorb the zero *-homomorphism, it is necessarily nonunital.  The convention (which we will follow) is that if $\phi$ is a unital extension then when we say that $\phi$ is \term{absorbing}, we mean that $\phi$ is unitally absorbing.

In \cite{Tho-2001-PAMS}, Thomsen establishes the existence of an absorbing trivial extension of $\B$ by $\A$, when $\A,\B$ are separable with $\B$ stable, and provides several equivalent characterizations of absorbing trivial extensions.
Important precursors to Thomsen's work, which are themselves of interest and give significantly more information, are contained in \cite{Voi-1976-RRMPA,Kas-1980-JOT,Lin-2002-JOT,EK-2001-PJM}.

In \cite{Pas-1981-PJM}, Paschke introduced, for a separable unital \cstar-algebra $\A$ and a unital trivial essential extension $\phi : \A \to B(\ell_2(\nat))$ (which is necessarily absorbing by Voiculescu's theorem \cite{Voi-1976-RRMPA}) the subalgebra $(\pi \circ \phi (\A))'$ of the Calkin algebra and proved, in the language of $\kk$-theory, $\k_j((\pi \circ \phi (\A))') \cong \kk^{j+1}(\A,\complex)$.
Soon after in \cite{Val-1983-PJM}, Valette extended these ideas and techniques to a pair of algebras $\A,\B$.
In particular, Valette proved \cite[Proposition~3]{Val-1983-PJM} that if $\A$ is a separable unital nuclear \cstar-algebra and $\B$ is a stable $\sigma$-unital \cstar-algebra, then $\k_j((\pi \circ \phi (\A))') \cong \kk^{j+1}(\A,\B)$, where $\phi : \A \to \Mul(\B)$ is a unital absorbing trivial extension.   This is the so-called \term{Paschke duality}, and it has been generalized to more general algebras (e.g., see \cite{Tho-2001-PAMS}), but in this paper we focus on the case when $\A$ is unital and nuclear.
It is for this reason that this algebra $(\pi \circ \phi (\A))'$ gets its name, the \term{Paschke dual algebra}.

\begin{definition}
  \label{def:paschke-dual}
  Let $\A,\B$ be separable \cstar-algebras with $\A$ unital and $\B$ stable.
  Let $\phi : \A \to \Mul(\B)$ be a unital absorbing trivial extension.
  The \term{Paschke dual algebra} $\paschkedual{\A}{\B}$ is the relative commutant in the corona algebra $\C(\B)$ of the image of $\A$ under $\pi \circ \phi$, i.e., $\paschkedual{\A}{\B} \defeq \big(\pi \circ \phi (\A)\big)'$.
\end{definition}

The Paschke dual algebra is independent, up to *-isomorphism, of the choice of the absorbing (strongly) unital trivial extension $\phi$.
Indeed, if $\phi, \psi : \A \to \Mul(\B)$ are unital absorbing trivial extensions, then $\phi \sim \phi \oplus \psi \sim \psi$, and so there is a unitary $U \in \Mul(\B)$ such that $\pi(U)(\pi \circ \phi(a))\pi(U^{*}) = \pi \circ \psi(a)$ for all $a \in \A$.
Then this unitary also conjugates the relative commutants of $\pi \circ \phi(\A)$ and $\pi \circ \psi(\A)$.

In \cite{LN-2020-IEOT}, we proved several results about $\paschkedual{\A}{\B}$, generalizing those studied by Paschke for $\paschkedual{\A}{\K}$, which we summarize in the next lemma.

\begin{lemma}[\protect{\cite[Lemma~2.2]{LN-2020-IEOT}}]
  \label{lem:paschke-dual-properly-infinite}
  Let $\A$ be a unital separable nuclear \cstar-algebra, and let $\B$ be a separable stable \cstar-algebra.
  Then we have the following:
  \begin{enumerate}
  \item The Paschke dual algebra $\paschkedual{\A}{\B}$ is properly infinite.  In fact, $1 \oplus 0 \sim 1 \oplus 1$ in $\mat_2 \otimes \paschkedual{\A}{\B}$, i.e., $\paschkedual{\A}{\B}$ contains a unital copy of the Cuntz algebra $\mathcal{O}_2$.
  \item Every element of $\kz(\paschkedual{\A}{\B})$ is represented by a full properly infinite projection in $\paschkedual{\A}{\B}$.
  \end{enumerate}
  \label{lem:Jan1520216AM}
\end{lemma}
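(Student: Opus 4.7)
The plan is to exploit the fact that $\paschkedual{\A}{\B}$ is the relative commutant of the image of a unital absorbing trivial extension $\phi : \A \to \Mul(\B)$.

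For (i), the main idea is that since $\phi$ is itself a strongly unital trivial extension, absorption yields $\phi \sim \phi \oplus \phi$. I would fix isometries $V,W \in \Mul(\B)$ with $VV^* + WW^* = 1$ appearing in the BDF sum, together with a unitary $U \in \Mul(\B)$ implementing the equivalence $\phi \sim \phi \oplus \phi$, so that
\[
  \pi(U)\,(\pi \circ \phi)(a)\,\pi(U)^* = \pi(V)\,(\pi \circ \phi)(a)\,\pi(V)^* + \pi(W)\,(\pi \circ \phi)(a)\,\pi(W)^*
\]
for every $a \in \A$. Setting $s_1 := \pi(U^*V)$ and $s_2 := \pi(U^*W)$, a short calculation shows that $s_1, s_2$ are isometries in $\C(\B)$ with $s_1 s_1^* + s_2 s_2^* = 1$ and $(\pi \circ \phi)(a) = s_1 (\pi \circ \phi)(a) s_1^* + s_2 (\pi \circ \phi)(a) s_2^*$. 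The orthogonality relations $s_i^* s_j = \delta_{ij}$ follow automatically, after which right-multiplying the previous identity by $s_i$ yields $s_i (\pi \circ \phi)(a) = (\pi \circ \phi)(a) s_i$, so $s_1, s_2 \in \paschkedual{\A}{\B}$. This exhibits a unital copy of $\mathcal{O}_2$ in $\paschkedual{\A}{\B}$, and in particular witnesses $1 \oplus 0 \sim 1 \oplus 1$ in $\mat_2 \otimes \paschkedual{\A}{\B}$.

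For (ii), part (i) immediately gives $[1] = 0$ in $\kz(\paschkedual{\A}{\B})$ and, by iterating the $\mathcal{O}_2$-generators, provides isometries $V_1, \ldots, V_n \in \paschkedual{\A}{\B}$ with mutually orthogonal ranges summing to $1$ for every $n$. The resulting *-isomorphism $\mat_n \otimes \paschkedual{\A}{\B} \to \paschkedual{\A}{\B}$ defined by $[a_{ij}] \mapsto \sum_{i,j} V_i a_{ij} V_j^*$ is compatible with $\kz$, so every class can be represented as $[p] - [q]$ with $p,q$ projections in $\paschkedual{\A}{\B}$ itself. Because $[1] = 0$ forces $-[q] = [1-q]$, I would then take four such isometries and define
\[
  r := V_1 p V_1^* + V_2 (1-q) V_2^* + V_3 V_3^* + V_4 V_4^*,
\]
a projection in $\paschkedual{\A}{\B}$. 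The Murray--von Neumann equivalences $V_i p V_i^* \sim p$ (via $V_i p$) and $V_i V_i^* \sim 1$ (via $V_i$), together with orthogonality of the $V_i V_i^*$, give $[r] = [p] + [1-q] + 2[1] = [p] - [q]$. Fullness of $r$ follows from $V_3 V_3^* \le r$ and $V_3^*(V_3 V_3^*)V_3 = 1$. For proper infiniteness, observe that $V_3 r$ and $V_4 r$ are partial isometries in $\paschkedual{\A}{\B}$ with common initial projection $r$ and mutually orthogonal range projections $V_3 r V_3^* \le V_3 V_3^*$ and $V_4 r V_4^* \le V_4 V_4^*$, both dominated by $r$; hence $r$ contains two orthogonal copies of itself.

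The main obstacle lies in (i): passing from the operator-algebraic equation encoding $\phi \sim \phi \oplus \phi$ to the conclusion that the candidate $\mathcal{O}_2$-generators actually commute with $(\pi \circ \phi)(\A)$. Once those generators are in hand, the remaining $\kz$-bookkeeping for (ii) is essentially routine.
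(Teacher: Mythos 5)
Your proof is correct. Note, however, that this lemma is merely cited in the present paper (as \cite[Lemma~2.2]{LN-2020-IEOT}) and no proof is given here, so there is no ``paper's own proof'' in this document to compare against. Nevertheless your argument is the standard one: since a unital absorbing extension $\phi$ absorbs itself, the unitary implementing $\phi \sim \phi \oplus \phi$ conjugates the BDF-sum isometries into a pair of Cuntz isometries lying in the relative commutant, and the $\kz$-bookkeeping in part (ii) then proceeds exactly as you describe. One small point worth spelling out explicitly: after obtaining $s_1(\pi\circ\phi)(a) = (\pi\circ\phi)(a)s_1$ from right-multiplication, you should also note that taking adjoints in the displayed identity gives the analogous equation for $s_i^*$, or equivalently replace $a$ by $a^*$ and take adjoints; this ensures $s_i^* \in \paschkedual{\A}{\B}$ as well, which you implicitly use when asserting a unital copy of $\mathcal{O}_2$ sits inside $\paschkedual{\A}{\B}$. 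Since $\paschkedual{\A}{\B}$ is the commutant of a self-adjoint set it is automatically a \cstar-subalgebra, so this is not an actual gap, but it is cleaner to say so. The rest of the argument---the $\mat_n\cong$ reduction of $\kz$, the definition of $r$, the computation $[r]=[p]-[q]$, fullness via $V_3 V_3^* \le r$, and proper infiniteness via the orthogonal partial isometries $V_3 r$ and $V_4 r$---is all sound.
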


In \cite{LN-2020-IEOT}, we also provided the following double commutant theorem for the Paschke dual algebra, which is akin to \cite[Theorem~1]{Ng-2018-NYJM}, and shows that the algebra $\paschkedual{\A}{\B}$ is dual in yet another way.   This generalizes a remark of Valette \cite{Val-1983-PJM}.

\begin{theorem}[\protect{\cite[Theorem~2.10]{LN-2020-IEOT}}]
  \label{thm:standard-position-relative-commutants}
  Let $\A$ be a separable simple unital nuclear \cstar-algebra, and let $\B$ be a separable stable simple \cstar-algebra.
  For any unital trivial absorbing extension $\phi : \A \to \Mul(\B)$,
  \begin{equation*}
    \big(\pi \circ \phi(\A)\big)'' = \pi \circ \phi (\A).
  \end{equation*}
  Equivalently, if we identify $\A$ with its image $\pi \circ \phi (\A)$ (since $\pi \circ \phi$ is injective), and $\paschkedual{\A}{\B} \defeq \big(\pi \circ \phi(\A)\big)'$, then
  \begin{equation*}
    \A' = \paschkedual{\A}{\B} \quad\text{and}\quad (\paschkedual{\A}{\B})' = \A.
  \end{equation*}
\end{theorem}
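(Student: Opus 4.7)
The containment $\pi\circ\phi(\A)\subseteq(\paschkedual{\A}{\B})'$ is immediate from the definition, so the entire content lies in establishing $(\paschkedual{\A}{\B})'\subseteq \pi\circ\phi(\A)$, i.e.\ that any element of the corona commuting with everything in the Paschke dual lies in $\pi\circ\phi(\A)$. Unlike the classical von Neumann double commutant theorem, there is no weak closure to invoke here; the argument must be purely \cstar-algebraic, leveraging the rigidity of absorbing extensions of simple nuclear algebras.

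The plan is to first exploit absorption to produce enough structure inside $\paschkedual{\A}{\B}$. Since $\phi$ is a unital absorbing trivial extension and $\B$ is stable, $\phi\oplus\phi\sim\phi$, so there exists a unitary $u\in\C(\B)$ with $u\pi\phi(a)u^*=s_1\pi\phi(a)s_1^*+s_2\pi\phi(a)s_2^*$ for the canonical isometries $s_1,s_2\in\C(\B)$. Setting $t_i\defeq u^*s_i$ gives a pair of isometries in $\paschkedual{\A}{\B}$ with $t_1t_1^*+t_2t_2^*=1$, i.e.\ a unital copy of $\mathcal{O}_2$ inside the Paschke dual. More generally, by absorbing with any trivial extension $\rho$ and extracting the corresponding intertwining isometries, one obtains an abundant supply of elements in $\paschkedual{\A}{\B}$ parametrized by unitary equivalences $\phi\sim\phi\oplus\rho$.

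Now take $x\in(\paschkedual{\A}{\B})'$. Because the commutant is a $*$-algebra, $x$ commutes with each $t_i$ and $t_i^*$, so $x=\sum_i t_ixt_i^*$. The decisive step is to argue that $x$ must in fact lie in $\pi\circ\phi(\A)$. My approach is by contraposition: assuming $x\notin\pi\circ\phi(\A)$, I would construct an element of $\paschkedual{\A}{\B}$ not commuting with $x$. The construction uses simplicity of $\A$ (so $\pi\circ\phi(\A)$ is simple and $\pi\circ\phi$ is faithful), simplicity of $\B$ (so the corona $\C(\B)$ has a well-behaved ideal structure), and nuclearity of $\A$ together with Thomsen's characterization of absorbing trivial extensions (building on Voiculescu and Kasparov). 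Concretely, one realizes auxiliary absorbing trivial extensions $\phi\sim\phi\oplus\psi$ with $\psi$ chosen to separate $x$ from $\pi\circ\phi(\A)$, and extracts the required commutant element from the implementing unitaries as above.

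The main obstacle is this last step: the uniqueness/absorption theorems produce \emph{some} unitary implementing $\phi\sim\phi\oplus\psi$, but we need to choose $\psi$ cleverly so that the resulting commutant element actually fails to commute with a specified $x\notin\pi\circ\phi(\A)$. This requires using simplicity of $\A$ and $\B$ in a genuine way to show that the collection of elements of $\paschkedual{\A}{\B}$ produced in this fashion is rich enough to detect non-membership in $\pi\circ\phi(\A)$. This is precisely the place where we must go beyond what works for arbitrary $\A$ and $\B$, and it is the reason the hypotheses of simplicity (of both algebras) and nuclearity (of $\A$) appear. Once the separating element is in hand, the contradiction with $x\in(\paschkedual{\A}{\B})'$ is immediate.
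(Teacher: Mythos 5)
The statement you are proving is cited in the paper to \cite[Theorem~2.10]{LN-2020-IEOT}; the present manuscript does not reprove it, so there is no proof here to compare against directly. Judged on its own terms, your proposal has a genuine gap at exactly the place where the theorem has content.

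The easy inclusion $\pi\circ\phi(\A)\subseteq(\paschkedual{\A}{\B})'$ and the production of a unital $\mathcal{O}_2\subset\paschkedual{\A}{\B}$ via $\phi\sim\phi\oplus\phi$ are both fine (the latter is \Cref{lem:paschke-dual-properly-infinite}). But the identity $x=\sum_i t_ixt_i^*$ that you derive from it is tautological for $x\in(\paschkedual{\A}{\B})'$ and is never used afterward; it does not constrain $x$ beyond what you already assumed. Everything then hinges on the final paragraph, and there you do not give an argument: you state that, given $x\notin\pi\circ\phi(\A)$, one should be able to choose an auxiliary trivial extension $\psi$ so that the implementing unitary for $\phi\sim\phi\oplus\psi$ produces an element of $\paschkedual{\A}{\B}$ that fails to commute with $x$, and you explicitly acknowledge that you do not know how to choose such a $\psi$. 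That is not a small detail to fill in; it is the theorem. Absorption by itself only gives the existence of \emph{some} intertwining unitary, with no a priori control over its interaction with a fixed $x$ outside $\pi\circ\phi(\A)$, and you give no mechanism for obtaining such control. In particular you never explain how nuclearity of $\A$ enters, and it must enter: without something like an injective bicommutant, a weak expectation, or a quasicentral/Voiculescu-type approximation scheme built from nuclearity, the analogous statement is false for general $\A$.

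Two further concrete issues. First, the contrapositive framing forces you to produce a separating element for an \emph{arbitrary} $x\notin\pi\circ\phi(\A)$, which (absent a conditional expectation onto $\pi\circ\phi(\A)$) is a much harder target than proving a quantitative approximation result of the form ``$[\,\pi(X),\,\paschkedual{\A}{\B}\,]=0$ forces $\dist(\pi(X),\pi\circ\phi(\A))$ arbitrarily small.'' The standard route for double-commutant theorems of this kind goes through such an approximation, using a Voiculescu-type quasicentral approximate unit that is simultaneously quasicentral for $\phi(\A)$ and for a lift $X$ of $x$, together with the absorption/purely-large property. Your sketch never touches this. Second, your appeal to ``Thomsen's characterization of absorbing trivial extensions'' in order to ``choose $\psi$ cleverly'' conflates existence and equivalence characterizations of absorbing extensions with a separation property that those theorems do not assert. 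As written, the proposal is a strategy outline with the crux unproved rather than a proof.
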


Towards generalizations of \Cref{thm:bdf-unitary-equivalence-semifinite-factors},
we remind the reader of the following definition due to \cite{DE-2001-KT}.

\begin{definition}
  \label{def:proper-asymptotic-unitary-equivalence}
  Let $\A, \B$ be separable \cstar-algebras, and let
  $\phi, \psi : \A \rightarrow \Mul(\B)$ be *-homomorphisms.

  $\phi$ and $\psi$ are said to be \term{properly asymptotically unitarily equivalent}
  ($\phi \paue \psi$) if there exists a norm continuous path
  $\{ u_t \}_{t \in [0, \infty)}$ of unitaries in $\complex 1 + \B$ such that
  \begin{equation*}
    u_t \phi(a) u_t^* - \psi(a) \in \B, \makebox{   for all }t
  \end{equation*}
  and
  \begin{equation*}
    \norm{ u_t \phi(a) u_t^* - \psi(a) } \rightarrow 0 \makebox{   as  } t \rightarrow
    \infty,
  \end{equation*}
  for all $a \in \A$.
\end{definition}

As preparation for the next result, we remark that the corona algebra of a separable stable \cstar-algebra is $\ko$-injective (see \Cref{lem:C(B)Injective}).

The next result says that $\ko$-injectivity of the Paschke dual $\paschkedual{\A}{\B}$ is sufficient to guarantee interesting uniqueness theorems which generalize \Cref{thm:bdf-unitary-equivalence-semifinite-factors}.
The proof is already contained in previous works, though often implicitly (\cite{DE-2001-KT,Lee-2011-JFA,LN-2020-IEOT, Lin-2002-JOT}).  As part of the purpose of this section is to provide an accessible reference,
for the convenience of the reader and to help clean up the literature, we explicitly provide the statement and argument.

\begin{theorem}
  Let $\A$ be a separable nuclear \cstar-algebra, and let $\B$ be a separable stable \cstar-algebra.
  Suppose that either $\A$ is unital and $\paschkedual{\A}{\B}$ is $\ko$-injective, or $\A$ is nonunital and $\paschkedual{(\widetilde{\A})}{\B}$ is $\ko$-injective.
  Let $\phi, \psi : \A \rightarrow \Mul(\B)$ be two absorbing trivial extensions with $\phi(a) - \psi(a) \in \B$ for all $a \in \A$ such that either both $\phi$ and $\psi$ are unital or both $\pi \circ \phi$ and $\pi \circ \psi$ are nonunital.

  Then
  $[\phi, \psi] = 0$ in $\kk(\A, \B)$ if and only if $\phi \paue \psi$.
  \label{thm:MainUniqueness}
\end{theorem}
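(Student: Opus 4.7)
The plan is to reduce to the unital case---for nonunital $\A$, unitize to $\widetilde{\A}$ and pass to the induced extensions $\widetilde{\phi}, \widetilde{\psi} : \widetilde{\A} \to \Mul(\B)$, invoking the hypothesis on $\paschkedual{\widetilde{\A}}{\B}$---and then handle the two implications separately. For $(\Leftarrow)$, given a path $u_t \in \mathcal{U}(\complex 1 + \B)$ realizing $\phi \paue \psi$, I would view $t \mapsto (\phi, u_t \phi u_t^*)$ as a norm-continuous family of Cuntz pairs on $[0,\infty]$ (with value $(\phi, \psi)$ at $t = \infty$ via the asymptotic convergence), packaged as a single Cuntz pair over $C([0,\infty], \B)$. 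For each $t < \infty$ the Paschke-dual representative of $(\phi, u_t \phi u_t^*)$ is just $\pi(u_t) = \lambda_t \cdot 1 \in \complex \subset \paschkedual{\A}{\B}$, which is a scalar and therefore null-homotopic, so its class in $\k_1(\paschkedual{\A}{\B})$ is zero; hence $[\phi, u_t \phi u_t^*] = 0$. By homotopy invariance of $\kk$, $[\phi, \psi] = 0$.

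For $(\Rightarrow)$, assume $[\phi, \psi] = 0$. By absorption, choose $U \in \mathcal{U}(\Mul(\B))$ with $U\phi(a) U^* - \psi(a) \in \B$ for all $a$. Combined with $\phi(a) - \psi(a) \in \B$, this gives $U \phi U^* - \phi \in \B$ pointwise, so $\pi(U) \in \paschkedual{\A}{\B}$; under Paschke duality, $[\pi(U)] \in \k_1(\paschkedual{\A}{\B})$ corresponds to $[\phi, \psi] = 0$. By $\ko$-injectivity of $\paschkedual{\A}{\B}$, $\pi(U) \in \mathcal{U}_0(\paschkedual{\A}{\B})$; write $\pi(U) = \prod_j e^{i h_j}$ with self-adjoint $h_j \in \paschkedual{\A}{\B}$, lift each to self-adjoint $\tilde{h}_j \in \Mul(\B)$, and set $V := \prod_j e^{i \tilde{h}_j} \in \mathcal{U}(\Mul(\B))$, so $\pi(V) = \pi(U)$. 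Then $W := V^* U$ lies in $\mathcal{U}(\complex 1 + \B)$ and satisfies $W \phi W^* - \psi \in \B$: indeed,
$W \phi W^* - \psi = V^*(U \phi U^* - \psi) V + (V^* \psi V - \psi) \in \B$,
using $U \phi U^* - \psi \in \B$ and that $\pi(V) \in \paschkedual{\A}{\B}$ commutes with $\pi \circ \psi$, whence $V^* \psi V - \psi \in \B$. This yields a one-shot unitary equivalence of $\phi$ and $\psi$ by an element of $\widetilde{\B}$.

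The main obstacle lies in upgrading this one-shot equivalence to an asymptotic one, as required by $\paue$. I would iterate: since conjugation by $W \in \widetilde{\B}$ preserves $\kk$-classes, the pair $(W \phi W^*, \psi)$ again satisfies the hypotheses, so re-running the construction produces $W_1 \in \mathcal{U}(\complex 1 + \B)$ with $\|W_1 W \phi W^* W_1^* - \psi\|$ smaller; iterating yields $W_n$ with errors decreasing geometrically, and the finite products $W_n \cdots W_1 W$ can be concatenated via short paths inside $\mathcal{U}(\complex 1 + \B)$ to assemble the continuous $u_t$ on $[0, \infty)$. The crucial input that makes the iteration converge---a quantitative refinement of the one-shot argument ensuring each step cuts the error by a fixed factor---is where the bulk of the technical work lies, analogous to the approximate unitary equivalence machinery of Dadarlat--Eilers \cite{DE-2001-KT} and Lin \cite{Lin-2002-JOT}. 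Once this quantitative lemma is in hand, the path-concatenation into a norm-continuous $u_t$ is routine.
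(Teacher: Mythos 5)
Your ``if'' direction is fine (and is the same content as the paper's citation of Dadarlat--Eilers Lemma~3.3), but your ``only if'' direction has a genuine gap, and it is precisely the gap you flag at the end: the iteration has no foundation. Your one-shot construction produces $W \in 1+\B$ with $W\phi(a)W^* - \psi(a) \in \B$ for every $a$, but this gives \emph{no norm control whatsoever} on $W\phi W^* - \psi$; the error is merely in $\B$, not small. ``Re-running the construction'' on the pair $(W\phi W^*, \psi)$ produces another $W_1 \in 1+\B$ with the same qualitative property, with no mechanism forcing $\|W_1 W \phi W^* W_1^* - \psi\|$ to shrink by a fixed factor. The ``quantitative refinement'' you defer to is not a refinement of what you wrote --- it would require a completely different argument, and it is not obvious one exists along these lines.

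The paper's proof is structured differently and avoids the problem. Rather than producing a single unitary and trying to iterate it, one invokes \cite[Lemma~3.3]{LN-2020-IEOT} (an absorption-style result going back to Kasparov--Thomsen and Dadarlat--Eilers) to produce, \emph{up front}, a norm-continuous path $\{u_t\}_{t\in[0,\infty)}$ of unitaries in $\Mul(\B)$ with $u_t\phi(a)u_t^* - \psi(a) \in \B$ and $\|u_t\phi(a)u_t^* - \psi(a)\| \to 0$. The asymptotic convergence is thus built in from the start; the sole remaining obstruction to having the path in $\complex 1 + \B$ is the class $[\pi(u_0)] \in \ko(\paschkedual{\A}{\B})$. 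One then argues, via Higson's picture of $\kk$ and Thomsen's form of Paschke duality, that $[\phi,\psi]=0$ forces $[\pi(u_0)]=0$, so by $\ko$-injectivity $\pi(u_0) \sim_h 1$ in $\paschkedual{\A}{\B}$; this gives $v \in \complex 1 + \B$ with $v^*u_0 \sim_h 1$ in $\pi^{-1}(\paschkedual{\A}{\B})$, and after modifying the initial segment one may take $v^*u_0 = 1$. Finally --- and this is the second ingredient you are missing --- one considers the continuous path of inner automorphisms $\alpha_t = \operatorname{Ad}(v^*u_t)$ of $\phi(\A)+\B$ with $\alpha_0 = \operatorname{id}$, and invokes \cite[Proposition~2.15]{DE-2001-KT} (or \cite[Theorems~3.2, 3.4]{Lin-2002-JOT}) to implement this path by a continuous path of unitaries $\{v_t\}$ in $\phi(\A)+\B$ with $v_0=1$. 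Writing $vv_t = \phi(a_t) + b_t$ and setting $w_t := vv_t\phi(a_t)^* \in 1+\B$ then yields the required path in $\complex 1 + \B$. Your approach starts from the wrong primitive (a single unitary) and so has to manufacture the path from scratch; the paper starts from the path and only has to correct its initial value and shift it into $\complex 1 + \B$, which is exactly what $\ko$-injectivity plus the Dadarlat--Eilers automorphism lemma accomplish.
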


\begin{proof}

  The ``if'' direction follows directly from Lemma~3.3 of \cite{DE-2001-KT}.

  We now prove the ``only if'' direction.

  Let $\A^+$ denote the unitization of $\A$ if $\A$ is nonunital, and
  $\A \oplus \complex$ if $\A$ is unital.
  Now if $\phi : \A \rightarrow \Mul(\B)$ is a
  nonunital absorbing extension (so $\pi \circ \phi$ is nonunital),
  then $\phi(\A)^{\perp}$ contains a projection which is Murray--von Neumann equivalent
  to $1_{\Mul(\B)}$, and by
  \cite{EK-2001-PJM},
  the map $\phi^+ :
  \A^+ \rightarrow \Mul(\B)$ given by
  $\phi^+ |_{\A} = \phi$ and
  $\phi^+(1) = 1$ is a unital absorbing trivial extension (i.e., $\pi \circ \phi^+$ is unitally absorbing).
  The same holds for $\psi$ and $\psi^+$.
  Moreover, $(\phi^+,\psi^+)$ is a generalized homomorphism.
  Additionally, $[\phi^+,\psi^+] = 0$ because a homotopy of generalized homomorphisms $(\phi_s,\psi_s)$ between $(\phi,\psi)$ and $(0,0)$ lifts to a homotopy $(\phi^+_s,\psi^+_s)$, and hence $[\phi^+,\psi^+] = [0^+,0^+] = 0$.
  Thus, we may assume that
  $\A$ is unital and $\phi$ and $\psi$ are unital *-monomorphisms.

  As before, we may identify the Paschke dual algebra as $\paschkedual{\A}{\B} = (\pi \circ \phi(\A))' \subseteq \C(\B)$.

  By \cite[Lemma~3.3]{LN-2020-IEOT} there exists a norm continuous path  $\{ u_t \}_{t \in [0, \infty)}$ of unitaries in $\Mul(\B)$ such that
  \begin{equation*}
    u_t \phi(a) u_t^* - \psi(a) \in \B
  \end{equation*}
  for all $t$ and for all $a \in \A$, and
  \begin{equation*}
    \norm{u_t \phi(a) u_t^* - \psi(a)} \rightarrow 0
  \end{equation*}
  as $t \rightarrow \infty$, for all $a \in \A$.

  It is trivial to see that this implies that
  \begin{equation*}
    [\phi, u_0\phi u_0^*] = [ \phi, \psi ] = 0,
  \end{equation*}
  and that $\pi(u_t) \in (\pi \circ \phi(\A))' = \paschkedual{\A}{\B}$ for all $t$.

  It is well-known that we have
  a group isomorphism $\kk(\A, \B) \rightarrow \kkhig(\A, \B) : [\phi, \psi]
  \rightarrow [\phi, \psi, 1]$.
  (Here, $\kkhig$ is the version of $\kk$-theory presented in \cite{Hig-1987-PJM}
  Section 2.)
  Hence, $[\phi, u_0 \phi u_0^*, 1] = 0$ in $\kkhig(\A, \B)$.
  Hence, by \cite[Lemma~2.3]{Hig-1987-PJM}, $[\phi, \phi, u_0^*] = 0$
  in $\kkhig(\A, \B)$.

  By Thomsen's Paschke duality theorem (\cite{Tho-2001-PAMS} Theorem 3.2),
  there is a group isomorphism $\ko(\paschkedual{\A}{\B}) \rightarrow \kkhig(\A, \B)$
  which sends $[\pi(u_0)]$ to $[\phi, \phi, u_0^*]$.
  Hence, $[\pi(u_0)] = 0$ in $\ko(\paschkedual{\A}{\B})$.
  Since $\paschkedual{\A}{\B}$ is $\ko$-injective, $\pi(u_0) \sim_h 1$ in $\paschkedual{\A}{\B} =
  (\pi \circ \phi(\A))'$.
  Hence, there exists a unitary $v \in \complex 1 + \B$ such that
  $v^* u_0 \sim_h 1$ in $\pi^{-1}(\paschkedual{\A}{\B})$.

  Hence, modifying an initial segment of $\{ v^*u_t \}_{t \in [0, \infty)}$
  if necessary, we may assume that $\{ v^*u_t \}_{t \in [0, \infty)}$ is
  a norm continuous path of unitaries in $\pi^{-1}(\paschkedual{\A}{\B})$ such that
  $v^*u_0 = 1$.

  Now for all $t \in [0,\infty)$, let $\alpha_t \in \Aut(\phi(\A) + \B)$ be given
  by $\alpha_t(x) \defeq v^* u_t x u_t^* v$ for all $x \in \phi(\A) + \B$.
  Thus, $\{ \alpha_t \}_{t \in [0, \infty)}$ is a uniformly continuous path of
  automorphisms of $\phi(\A) + \B$ such that $\alpha_0 = \id$.
  Hence, by \cite[Proposition~2.15]{DE-2001-KT} (see also
  \cite[Theorems~3.2 and 3.4]{Lin-2002-JOT}),
  there exist a continuous path $\{ v_t \}_{t\in [0, \infty)}$ of unitaries
  in $\phi(\A) + \B$ such that
  $v_0 = 1$ and
  $\norm{ v_t x v_t^* - v^* u_t x u_t^* v } \rightarrow 0$ as $t \rightarrow \infty$
  for all $x \in \phi(\A) + \B$.
  Thus, $\norm{ v v_t x v_t^* v^* - u_t x u_t^* } \rightarrow 0$ as
  $t \rightarrow \infty$
  for all $x \in \phi(\A) + \B$.

  We now proceed as in the last part of the proof of \cite[Proposition 3.6. Step 1]{DE-2001-KT} (see
  also the proof of \cite[Theorem~3.4]{Lin-2002-JOT}).
  For all $t \in [0, \infty)$, let $a_t \in \A$ and $b_t \in \B$ such that
  $v v_t = \phi(a_t) + b_t$.
  Since $\pi \circ \phi$ is injective, we have that for all $t$,
  $a_t$ is a unitary in $\A$, and hence, $\phi(a_t)$ is a unitary in $\phi(\A) + \B$.
  Note also that since $\pi \circ \phi = \pi \circ \psi$ and both maps are
  injective, $\norm{ a_t a a_t^* - a } \rightarrow 0$ as $t \rightarrow \infty$ for
  all $a \in \A$.
  For all $t$, let $w_t \defeq v v_t \phi(a_t)^* \in 1 + \B$.
  Then $\{ w_t \}_{t \in [0,1)}$ is a norm continuous path of unitaries in
  $\complex 1 + \B$, and for all $a \in \A$,
  \begin{align*}
    \norm{w_t \phi(a) w_t^* - \psi(a)}
    &\leq \norm{ w_t \phi(a) w_t^* - v v_t \phi(a) v_t^* v^* } \\
    &\qquad+ \norm{ v v_t \phi(a) v_t^* v^* - u_t \phi(a) u_t^* } \\
    &\qquad+ \norm{ u_t \phi(a) u_t^* - \psi(a) } \\
    &= \norm{ v v_t\phi( a_t^* a a_t - a) v_t^* v^* } \\
    &\qquad+ \norm{ v v_t \phi(a) v_t^* v^* - u_t \phi(a) u_t^* } \\
    &\qquad+ \norm{ u_t \phi(a) u_t^* - \psi(a) } \\
    &\rightarrow 0. \qedhere
  \end{align*}

\end{proof}

\begin{lemma}[\protect{\cite[Lemma~2.4]{LN-2020-IEOT}}]
  Let $\A$ be a unital separable nuclear \cstar-algebra and $\B$ be a separable
  stable \cstar-algebra such that either $\B \cong \K$ or $\B$ is
  simple purely infinite.

  Then $\paschkedual{\A}{\B}$ is $\ko$-injective.
  \label{lem:PIK1Injective}
\end{lemma}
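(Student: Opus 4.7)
The plan is to apply the abstract sufficient condition for $\ko$-injectivity given by \Cref{thm:k1-injectivity-generic}, verifying its hypothesis in both cases by exploiting the structure of the corona algebra $\C(\B)$. By \Cref{lem:Jan1520216AM}, $\paschkedual{\A}{\B}$ is properly infinite and unitally contains $\mathcal{O}_2$, so $\k_1(\paschkedual{\A}{\B}) = \mathcal{U}(\paschkedual{\A}{\B})/\mathcal{U}_0(\paschkedual{\A}{\B})$ (no matrix amplification is needed), and every unitary of $\paschkedual{\A}{\B}$ admits a lift to a unitary in $\Mul(\B)$.

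The common feature of the two cases is that $\C(\B)$ is itself a purely infinite simple unital \cstar-algebra, and hence is $\ko$-injective by Cuntz. For $\B = \K$ this is the classical Calkin algebra; for $\B$ separable, stable, simple, and purely infinite, the pure infiniteness and simplicity of $\C(\B)$ follow from results of Lin, Zhang, and Kucerovsky--Ng. Given $u \in \mathcal{U}(\paschkedual{\A}{\B})$ with $[u]=0$ in $\k_1(\paschkedual{\A}{\B})$, one lifts $u$ to a unitary $U \in \Mul(\B)$; the conjugation $\operatorname{Ad}(U) \circ \phi$ is then another unital trivial absorbing extension, agreeing with $\phi$ modulo $\B$. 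Via Paschke--Thomsen duality $\k_1(\paschkedual{\A}{\B}) \cong \kk^0(\A,\B)$, the hypothesis $[u]=0$ reinterprets as triviality of the associated $\kk^0$-class of the pair $(\phi, \operatorname{Ad}(U) \circ \phi)$.

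Feeding this into \Cref{thm:k1-injectivity-generic} together with $\ko$-injectivity of $\C(\B)$ and proper infiniteness of $\paschkedual{\A}{\B}$ yields a norm continuous path of unitaries in $\paschkedual{\A}{\B}$ from $u$ to $1$. The main obstacle is the commutant-promotion step built into the criterion: upgrading a path of unitaries in $\C(\B)$ that asymptotically intertwines $\phi$ with itself into a genuine path of unitaries inside the relative commutant $\paschkedual{\A}{\B}$. In each of our two cases, the abundance of isometries provided by the purely infinite simple structure of $\C(\B)$, together with the unital $\mathcal{O}_2$ sitting inside $\paschkedual{\A}{\B}$, furnishes exactly the flexibility required to execute this promotion. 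This is the point at which the hypothesis that $\B$ is either $\K$ or simple purely infinite is indispensable.
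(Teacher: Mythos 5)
The paper does not prove this lemma; it cites it as \cite[Lemma~2.4]{LN-2020-IEOT}. So there is no in-text proof to compare against, and your proposal must be judged on its own merits. It has several substantive problems.

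Your opening claim --- that since $\paschkedual{\A}{\B}$ unitally contains $\mathcal{O}_2$, ``$\k_1(\paschkedual{\A}{\B}) = \mathcal{U}(\paschkedual{\A}{\B})/\mathcal{U}_0(\paschkedual{\A}{\B})$'' --- is circular. That identification is exactly $\ko$-injectivity (together with surjectivity). Containing a unital $\mathcal{O}_2$ gives \emph{surjectivity} of $\mathcal{U}/\mathcal{U}_0 \to \ko$, but injectivity for general properly infinite unital \cstar-algebras is precisely the open question of Blanchard--Rohde--R\o{}rdam quoted in the introduction. You cannot assume it; the lemma is a special case you are meant to prove.

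You then assert you will ``feed'' your data into \Cref{thm:k1-injectivity-generic}, but you never verify the hypothesis of that theorem, namely that every unitary in $\paschkedual{\A}{\B}$ is homotopic \emph{within} $\paschkedual{\A}{\B}$ to one that is strongly full in $\C(\B)$. The whole point of the cases $\B \cong \K$ or $\B$ simple purely infinite is that $\C(\B)$ is then \emph{simple}, so every nonzero element of $\C(\B)$ --- and hence every unitary in $\paschkedual{\A}{\B}$ --- is automatically strongly full in $\C(\B)$, making the hypothesis trivial with the identity homotopy. You mention the simplicity of $\C(\B)$ but never draw this conclusion; instead you turn to a Paschke--Thomsen duality reinterpretation and a vague ``commutant-promotion'' narrative that does not yield an argument. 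There is also a mismatch of hypotheses: \Cref{thm:k1-injectivity-generic} assumes $\A$ is simple, whereas the lemma you are proving does not, so invoking the theorem as stated is not licit without addressing that gap. Finally, the assertion that every unitary in $\paschkedual{\A}{\B}$ lifts to a unitary in $\Mul(\B)$ is not automatic (a unitary in $\C(\B)$ lifts to a unitary in $\Mul(\B)$ only when its $\ko(\B)$ index vanishes); you state it without justification and your subsequent reasoning leans on it.

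In short: the one observation that would make the proposed strategy go through --- strong fullness is automatic because $\C(\B)$ is simple --- is not made, and the place you do try to shortcut (via unital $\mathcal{O}_2$) begs the question. As written, this is not a proof.
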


\begin{theorem}
  Let $\A$, $\B$ be separable \cstar-algebras such that $\A$ is nuclear and
  either $\B \cong \K$ or $\B$ is stable simple purely infinite.
  Let $\phi, \psi : \A \rightarrow \Mul(\B)$ be essential trivial extensions
  with $\phi(a) - \psi(a) \in \B$ for all $a \in \A$  such that
  either both $\phi$ and $\psi$ are unital or both $\pi \circ \phi$ and
  $\pi \circ \psi$ are nonunital.

  Then $[\phi, \psi] = 0$ in $\kk(\A, \B)$ if and only if $\phi \paue \psi$.
\end{theorem}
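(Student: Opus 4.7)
The plan is to reduce the statement directly to \Cref{thm:MainUniqueness}, which already packages the desired equivalence $[\phi,\psi] = 0 \iff \phi \paue \psi$ under two hypotheses: (a) $\ko$-injectivity of the relevant Paschke dual algebra, and (b) that $\phi$ and $\psi$ are \emph{absorbing} trivial extensions. The present theorem weakens absorption to merely essential, so the only nontrivial work is to upgrade ``essential trivial'' to ``absorbing trivial'' under the stronger assumption that $\B\cong\K$ or $\B$ is stable simple purely infinite.

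First I would dispose of hypothesis (a). If $\A$ is unital, then \Cref{lem:PIK1Injective} applied to the pair $(\A,\B)$ yields $\ko$-injectivity of $\paschkedual{\A}{\B}$ directly. If $\A$ is nonunital, then $\widetilde{\A}$ is a unital separable nuclear \cstar-algebra, and \Cref{lem:PIK1Injective} applied to $(\widetilde{\A},\B)$ yields $\ko$-injectivity of $\paschkedual{(\widetilde{\A})}{\B}$, which is exactly the form of (a) required by \Cref{thm:MainUniqueness} in the nonunital case.

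Next I would verify hypothesis (b) by invoking the classical absorption theorems. In the case $\B \cong \K$, we have $\Mul(\B) = B(\ell_2)$, and Voiculescu's theorem \cite{Voi-1976-RRMPA} shows that any essential trivial extension of a separable nuclear \cstar-algebra into $B(\ell_2)$ is absorbing (and unitally absorbing in the unital case). In the case where $\B$ is a separable stable simple purely infinite \cstar-algebra, nuclearity of $\A$ guarantees that any trivial lift $\phi_0 : \A \to \Mul(\B)$ of $\phi$ is a nuclear *-homomorphism, so the Kirchberg-type absorption theorem of Elliott--Kucerovsky \cite{EK-2001-PJM} applies and yields that $\phi$ is (unitally) absorbing. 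The same argument applies to $\psi$, and the unital/nonunital convention on $\pi \circ \phi, \pi \circ \psi$ passes through unchanged.

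With both hypotheses of \Cref{thm:MainUniqueness} in hand, together with the remaining assumptions $\phi(a) - \psi(a) \in \B$ for all $a \in \A$ and the unital/nonunital compatibility of $\phi$ and $\psi$, the equivalence $[\phi,\psi] = 0$ in $\kk(\A,\B)$ iff $\phi \paue \psi$ follows immediately. There is no substantive obstacle here: the theorem is essentially the specialization of \Cref{thm:MainUniqueness} made possible by combining \Cref{lem:PIK1Injective} with the classical absorption theorems for $\K$ and for stable simple purely infinite targets.
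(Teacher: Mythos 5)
Your proposal is correct and takes essentially the same route as the paper: reduce to \Cref{thm:MainUniqueness}, supply $\ko$-injectivity from \Cref{lem:PIK1Injective}, and upgrade ``essential trivial'' to ``absorbing trivial'' using the hypotheses on $\B$. The paper dispatches the absorption step with a single citation to \cite[Proposition~2.5]{GN-2019-AM}. One small imprecision in your justification for the purely infinite case: the Elliott--Kucerovsky theorem does not say that a \emph{nuclear} trivial extension is automatically absorbing; it says that a (nuclear) essential extension is (nuclearly) absorbing if and only if it is \emph{purely large}. Nuclearity of $\A$ only converts ``nuclearly absorbing'' into ``absorbing.'' The missing ingredient is that when $\B$ is separable stable simple purely infinite, every essential extension is automatically purely large — this is precisely the content of the reference the paper cites. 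With that one clause added, your argument matches the paper's.
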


\begin{proof}
  This follows immediately from \Cref{thm:MainUniqueness} and
  \Cref{lem:PIK1Injective}.

  Of course, we here are using that we are in the ``nicest'' setting for
  extension theory:  By our hypotheses on $\A$ and $\B$, every essential
  extension of $\B$ by $\A$ is absorbing in the appropriate sense (see, for example, \cite[Proposition~2.5]{GN-2019-AM}).
\end{proof}

Thus, from the point of view of simple stable canonical ideals with appropriate
regularity properties,
what remains is the case where the canonical ideal is stably finite.  In
\cite{LN-2020-IEOT}, we had a partial result with restrictive conditions
(see the present paper \Cref{thm:Jan152021Restrictive}).  Part of
the goal of the present paper is to remove many of these restrictive conditions
(see the present paper \Cref{thm:main-theorem}).

The next result partially answers a question that Professor Huaxin Lin
asked the second author.  The argument actually comes from Lin's
paper \cite[Proposition~2.6]{Lin-2005-CJM}.

\begin{proposition}
  \label{prop:paschke-dual-purely-infinite}
  Let $\A$ be a unital separable simple nuclear \cstar-algebra, and let
  $\B$ be a separable stable \cstar-algebra such that
  either
  \begin{equation*}
    \B \cong \K \makebox{  or  } \B \makebox{  is simple purely infinite.}
  \end{equation*}

  Let $\sigma : \A \rightarrow \Mul(\B)$ be a unital trivial essential
  extension.

  Then $(\pi \circ \sigma (\A))'$  is simple purely infinite.
  As a consequence, $\paschkedual{\A}{\B}$ is simple purely infinite.
\end{proposition}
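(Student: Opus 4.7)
The plan is to prove both conclusions simultaneously by a single reduction. Under the hypotheses on $\A$ and $\B$, every unital essential extension of $\B$ by $\A$ is absorbing in the appropriate unital sense (this is the ``nicest setting'' noted in the remark after the proof of the theorem preceding \Cref{lem:PIK1Injective}; cf.\ \cite[Proposition~2.5]{GN-2019-AM}). Thus the given $\sigma$ is itself a unital absorbing trivial extension, and so $(\pi \circ \sigma(\A))' \cong \paschkedual{\A}{\B}$; establishing the first statement therefore yields the ``consequence'' automatically. Since \Cref{lem:paschke-dual-properly-infinite} supplies a unital embedding $\mathcal{O}_2 \hookrightarrow \paschkedual{\A}{\B}$, the unit is properly infinite, and the conclusion reduces to the single criterion: for every $x \in \paschkedual{\A}{\B}_+ \setminus \{0\}$ there exists $y \in \paschkedual{\A}{\B}$ with $y^{*} x y = 1$. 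This forces $1$ into the ideal generated by $x$ (yielding simplicity) and, combined with proper infiniteness of the unit, yields pure infiniteness.

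Given such an $x$, I first lift it to a positive contraction $X \in \Mul(\B)$ satisfying $[X, \sigma(a)] \in \B$ for every $a \in \A$; this is achieved by averaging any positive lift against a quasicentral approximate unit of $\B$ relative to the separable subalgebra $\sigma(\A)$. For $0 < \epsilon < \|x\|$, set $X_\epsilon \defeq f_\epsilon(X)$ for the usual Rørdam cutoff $f_\epsilon$ vanishing on $[0,\epsilon]$. Then $\pi(X_\epsilon) \ne 0$, $X_\epsilon$ still commutes with $\sigma(\A)$ modulo $\B$, and $\pi(X_\epsilon) \precsim x$ in $\paschkedual{\A}{\B}$. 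The heart of the proof, imported from Lin \cite[Proposition~2.6]{Lin-2005-CJM}, is to construct an isometry $W \in \Mul(\B)$ with $W W^{*}$ lying (modulo $\B$) in the hereditary subalgebra of $\Mul(\B)$ generated by $X_\epsilon$ and with $[W, \sigma(a)] \in \B$ for every $a \in \A$. This uses (a) the rich structure of $\Mul(\B)$ under either hypothesis on $\B$: every full hereditary subalgebra contains a projection Murray--von Neumann equivalent to $1_{\Mul(\B)}$ (for $\B = \K$ this is the infinite-dimensionality of spectral projections of $X$ bounded away from $0$; in the simple purely infinite case it is the standard Kirchberg--Phillips machinery), and (b) nuclearity of $\A$ together with absorbing-ness of $\sigma$, which permits a Voiculescu--Kasparov-type absorption argument arranging that the equivalence can be implemented by an isometry commuting with $\sigma(\A)$ modulo $\B$. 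Once $W$ is in hand, $\pi(W) \in \paschkedual{\A}{\B}$ is an isometry with $\pi(W)\pi(W)^{*} \precsim \pi(X_\epsilon) \precsim x$, and a routine Rørdam scaling manipulation inside the hereditary subalgebra of $x$ promotes this to the required $y$.

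The principal obstacle is step (b): producing $W$ that both absorbs $1_{\Mul(\B)}$ into the cutdown of $X$ and commutes modulo $\B$ with every element of $\sigma(\A)$. Commutation modulo $\B$ for a single element (or finitely many) is trivially obtained by averaging against a quasicentral approximate unit, but arranging it uniformly over the entire nuclear separable subalgebra $\sigma(\A)$ while simultaneously satisfying the absorption condition into $X_\epsilon$ is the serious technical input. This is precisely the content of Lin's construction in \cite[Proposition~2.6]{Lin-2005-CJM}, which I would adapt verbatim to the present setting.
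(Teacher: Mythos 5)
Your high-level reduction is correct and matches the paper's: both reduce (using \cite[Proposition~2.5]{GN-2019-AM}) to the fact that $\sigma$ is absorbing, identify $(\pi \circ \sigma(\A))'$ with $\paschkedual{\A}{\B}$, note that \Cref{lem:paschke-dual-properly-infinite} gives $\mathcal{O}_2 \hookrightarrow \paschkedual{\A}{\B}$, and then observe that it suffices to scale each nonzero positive $c \in \paschkedual{\A}{\B}$ to $1$ by an element of $\paschkedual{\A}{\B}$. You also correctly identify Lin \cite[Proposition~2.6]{Lin-2005-CJM} as the source of the key idea, which is exactly what the paper acknowledges.

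However, the proposal leaves the central step --- producing the scaling element while simultaneously arranging commutation mod $\B$ with all of $\sigma(\A)$ --- as an explicit black box: you describe it only as ``a Voiculescu--Kasparov-type absorption argument \dots which I would adapt verbatim,'' and you flag this as ``the principal obstacle.'' The paper resolves this obstacle by a concrete trick that your proposal does not mention and that makes the quasicentrality issue disappear automatically. Namely, the paper forms the two unital extensions $\phi, \psi : C(\spec(c)) \otimes \A \to \C(\B)$ given by $\phi(f \otimes a) = f(c)\,\pi\circ\sigma(a)$ and $\psi(f \otimes a) = f(1)\,\pi\circ\sigma(a)$. Since $\phi$ is essential (hence absorbing under the hypotheses on $\B$) and $\psi$ is strongly unital trivial, $\phi \sim \phi \oplus \psi$, and the implementing unitary $w \in M_2 \otimes \C(\B)$ satisfies $w^*(c\oplus 0)w = 1 \oplus c$ \emph{and} intertwines $\pi\circ\sigma(a)\oplus 0$ with $\pi\circ\sigma(a)\oplus\pi\circ\sigma(a)$; a one-line calculation then shows $w(1\oplus 0) \in (\pi\circ\sigma(\A))'$ and gives the desired $s$ with $scs^* = 1$ directly, with no separate lift to $\Mul(\B)$, no cutoff $f_\epsilon(X)$, no isometry $W$, and no quasicentral approximate unit. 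So you should replace your proposed route through $\Mul(\B)$ by this tensor product packaging --- encoding both $c$ and $\sigma(\A)$ in a single extension of $C(\spec(c)) \otimes \A$ is exactly what lets absorption deliver both the scaling and the commutation at once, and it is the content of the adaptation of Lin's argument that your proposal leaves unwritten.
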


\begin{proof}
  Note that $\sigma$ is absorbing (e.g., see
  \cite[Proposition~2.5]{GN-2019-AM}).

  By \Cref{lem:paschke-dual-properly-infinite}, $\paschkedual{\A}{\B}$ contains a unital copy of $\mathcal{O}_2$, so it cannot be isomorphic to $\complex$.
  Let $c \in (\pi \circ \sigma(\A))'_+ - \{ 0 \}$ be arbitrary.
  We want to find $s \in (\pi \circ \sigma(\A))'$ so that
  $s c s^* = 1$.

  We may assume that $\norm{c} = 1$.

  Let
  \begin{equation*}
    X \defeq \spec(c).
  \end{equation*}

  Let
  \begin{equation*}
    \phi, \psi : C(X) \otimes \A \rightarrow \Mul(\B)/\B
  \end{equation*} be *-homomorphisms
  that are given as follows (using the universal property of the maximal tensor product):

  \begin{equation*}
    \phi : f \otimes a \mapsto f(c) (\pi \circ \sigma(a))
  \end{equation*}
  and
  \begin{equation*}
    \psi : f \otimes a \mapsto f(1) (\pi \circ \sigma(a)).
  \end{equation*}

  Both $\phi$ and $\psi$ are unital extensions.  Since $\A$ is simple unital, it is a short exercise to see that $\phi$ is essential.  Moreover,
  one can check that $\psi$ is a strongly unital trivial extension.

  Hence, since either $\B \cong \K$ or $\B$ is simple purely infinite,
  we must have that $\phi$ is absorbing (from \cite[Proposition~2.5]{GN-2019-AM}), so
  \begin{equation*}
    \phi \sim \phi \oplus \psi
  \end{equation*}
  where $\oplus$ is the BDF sum, and $\sim$ is unitary equivalence with unitary coming from $\Mul(\B)$.

  So let $W \in \mat_2 \otimes \Mul(\B)$ be such that
  \begin{equation*}
    W^* W = 1 \oplus 1,
  \end{equation*}
  \begin{equation*}
    W W^* = 1 \oplus 0,
  \end{equation*}
  and letting $w \defeq \pi(W)$
  \begin{equation*}
    w^*(\phi(\cdot) \oplus 0)w = \psi(\cdot) \oplus \phi(\cdot).
  \end{equation*}

  Then
  \begin{align*}
    w^*(c \oplus 0)w
    &= w^*(\phi(\id_X \otimes 1) \oplus 0)w\\
    &= \psi(\id_X \otimes 1) \oplus \phi(\id_X \otimes 1)\\
    &= 1 \oplus c. 
  \end{align*}

  So
  \begin{equation*}
    \left[\begin{array}{cc} 1 & 0 \\ 0 & 0 \end{array}\right]
    w^* (c \oplus 0) w \left[\begin{array}{cc} 1 & 0 \\ 0 & 0 \end{array} \right]
    = 1 \oplus 0.
  \end{equation*}

  Identifying $\Mul(\B)$ with $e_{1,1} \otimes \Mul(\B)$, we may view
  $w \left[ \begin{array}{cc} 1 & 0 \\ 0 & 0 \end{array} \right]$ as an
  element of $\C(\B)$.  Hence, from the above computation, to finish the
  proof, it suffices to prove that
  \begin{equation*}
    w \left[\begin{array}{cc} 1 & 0 \\ 0 & 0 \end{array} \right]
    \in (\pi \circ \sigma(\A))'.
  \end{equation*}

  But for all $a \in \A$,
  \begin{align*}
    w^* ( \pi \circ \sigma(a) \oplus 0) w
    &=  w^* (\phi(1 \otimes a) \oplus 0)w \\
    &= \psi(1 \otimes a) \oplus \phi(1 \otimes a)\\
    &= \pi \circ \sigma(a) \oplus \pi \circ \sigma(a).
  \end{align*}

  So
  \begin{equation*}
    (\pi \circ \sigma(a) \oplus 0) w =
    w ( \pi \circ \sigma(a) \oplus \pi \circ \sigma(a))
  \end{equation*}
  for all $a \in \A$.

  Since $ww^* = 1 \oplus 0$,
  \begin{equation*}
    (\pi \circ \sigma(a) \oplus \pi \circ \sigma(a))w =
    w (\pi \circ \sigma(a) \oplus \pi \circ \sigma(a))
  \end{equation*}
  for all $a \in \A$.

  Hence,
  \begin{equation*}
    w \in \mat_2 \otimes (\pi \circ \sigma(\A))'.
  \end{equation*}

  Hence,
  \begin{equation*}
    w \left[ \begin{array}{cc} 1 & 0 \\ 0 & 0 \end{array} \right]
    \in (\pi \circ \sigma(\A))'
  \end{equation*}
  as required.
\end{proof}

Our next result establishes a generic sufficient criterion for establishing the $\ko$-injectivity of the Paschke dual algebra.
It is the key technique used in the next section.

Let $\C \subseteq \D$ be an inclusion of \cstar-algebras. Recall that
$\C$ is said to be \term{strongly full} in $\D$ if
every nonzero element of $\C$ is full in $\D$, i.e., for all $y \in \C - \{ 0 \}$, $\Ideal_{\D}(y) = \D$.
Recall also that an element $x \in \D$ is said to be \term{strongly full} in $\D$ if $C^*(x)$ is strongly full in $\D$.
Finally, an extension $\phi : \A \to \C(\B)$ is said to be \term{full} if $\phi$ is essential and $\phi(\A)$ is strongly full in $\C(\B)$.

We remark on a key fact we use in the proof: if $\A$ is a unital separable nuclear \cstar-algebra and $\B$ is a separable stable with the corona factorization property (CFP), then every unital full extension of $\B$ by $\A$ is absorbing \cite{KN-2006-HJM}.
Many separable simple stable \cstar-algebras have the corona factorization property, including all such \cstar-algebras with strict comparison of positive elements, and also all such \cstar-algebras which are purely infinite (see \cite{OPR-2011-TAMS,OPR-2012-IMRN}).

\begin{theorem}
  \label{thm:k1-injectivity-generic}
  Suppose that $\A,\B$ are separable \cstar-algebras with $\A$ unital, simple and nuclear, and $\B$ stable and has the corona factorization property.

  Suppose that $\phi : \A \to \Mul(\B)$ is a unital trivial absorbing extension and realize $\paschkedual{\A}{\B}$ as $(\pi \circ \phi (\A))'$.
  If every unitary in the Paschke dual algebra $\paschkedual{\A}{\B}$ is homotopic in $\paschkedual{\A}{\B}$ to a unitary which is strongly full in $\C(\B)$, then $\paschkedual{\A}{\B}$ is $\ko$-injective.
  Moreover, for each $n \in \nat$, the map
  \begin{equation*}
    U(\mat_n \otimes \paschkedual{\A}{\B})/U(\mat_n \otimes \paschkedual{\A}{\B})_0 \to U(\mat_{2n} \otimes \paschkedual{\A}{\B})/U(\mat_{2n} \otimes \paschkedual{\A}{\B})_0
  \end{equation*}
  given by
  \begin{equation*} [u] \mapsto [u \oplus 1] \end{equation*}
  is injective.
\end{theorem}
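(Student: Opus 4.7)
The plan is to prove the ``moreover'' statement at $n=1$---that $[u] \mapsto [u \oplus 1]$ from $U(\paschkedual{\A}{\B})/U(\paschkedual{\A}{\B})_0$ to $U(\mat_2 \otimes \paschkedual{\A}{\B})/U(\mat_2 \otimes \paschkedual{\A}{\B})_0$ is injective---and to derive from it both the $\ko$-injectivity (via the proper infiniteness of \Cref{lem:Jan1520216AM}) and the general-$n$ case (by applying the $n=1$ statement to $(\mat_n \otimes \A, \B)$; its Paschke dual over $\B$ is canonically $\mat_n \otimes \paschkedual{\A}{\B}$ by stability, and the hypothesis transports across this identification since conjugation by the Murray--von Neumann partial isometries implementing the isomorphism preserves strong fullness in $\C(\B)$). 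So the task is: given $u \in U(\paschkedual{\A}{\B})$ with $u \oplus 1 \sim_h 1 \oplus 1$ in $\mat_2 \otimes \paschkedual{\A}{\B}$, show $u \sim_h 1$ in $\paschkedual{\A}{\B}$. The hypothesis of the theorem lets me replace $u$ at the outset by a homotopic unitary $v \in \paschkedual{\A}{\B}$ that is strongly full in $\C(\B)$, reducing me to showing $v \sim_h 1$ in $\paschkedual{\A}{\B}$.

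Next I would set $X \defeq \spec(v) \subseteq \mathbb{T}$, fix any $z_0 \in X$, and use the universal property of the maximal tensor product (unambiguous by nuclearity) together with the commutation of $v$ with $\pi \circ \phi(\A)$ to define *-homomorphisms $\phi_v, \phi_{z_0} : C(X) \otimes \A \to \C(\B)$ by $\phi_v(f \otimes a) \defeq f(v)(\pi \circ \phi(a))$ and $\phi_{z_0}(f \otimes a) \defeq f(z_0)(\pi \circ \phi(a))$. The map $\phi_{z_0}$ factors as $C(X) \otimes \A \to \A \xrightarrow{\phi} \Mul(\B)$ and so is a strongly unital trivial extension. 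Since every ideal of $C(X) \otimes \A$ has the form $C_0(U) \otimes \A$ by simplicity of $\A$, and $\phi_v(h \otimes 1_\A) = h(v) \neq 0$ for any nonzero $h \in C(X)$ (using $\spec(v) = X$), $\phi_v$ is essential. For the fullness of $\phi_v$---strong fullness of its image in $\C(\B)$---take a nonzero $y \in C(X) \otimes \A$ whose generated ideal is $C_0(U_y) \otimes \A$, pick any nonzero $h \in C_0(U_y)$, and observe that $\phi_v(h \otimes 1_\A) = h(v) \in \Ideal_{\C(\B)}(\phi_v(y))$; since $h(v) \in C^*(v) \setminus \{0\}$ is full in $\C(\B)$ by the strong fullness of $v$, so is $\phi_v(y)$. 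Combined with the CFP of $\B$, \cite{KN-2006-HJM} then makes $\phi_v$ absorbing, so $\phi_v \sim \phi_v \oplus \phi_{z_0}$.

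The crux is extracting the intertwiner and interpreting it as an isomorphism onto $\mat_2 \otimes \paschkedual{\A}{\B}$. Choosing isometries $V, W \in \Mul(\B)$ with $VV^* + WW^* = 1$, setting $t_1 \defeq \pi(V), t_2 \defeq \pi(W)$, and letting $U \in \Mul(\B)$ be a unitary implementing $\phi_v \sim \phi_v \oplus \phi_{z_0}$, evaluation on $\id_X \otimes 1$ and on $1 \otimes a$ yields
\begin{align*}
\pi(U) v \pi(U)^* &= t_1 v t_1^* + z_0 t_2 t_2^*, \\
\pi(U)(\pi \circ \phi(a)) \pi(U)^* &= t_1 (\pi \circ \phi(a)) t_1^* + t_2 (\pi \circ \phi(a)) t_2^*, \quad a \in \A.
\end{align*}
The second identity says $\pi(U)$ intertwines $\pi \circ \phi$ with its BDF double, so conjugation by $\pi(U)$ carries $\paschkedual{\A}{\B} = (\pi \circ \phi(\A))'$ onto the commutant of $(\pi \circ \phi \oplus \pi \circ \phi)(\A)$; through the corner decomposition using $t_1, t_2$, that latter commutant is canonically $\mat_2 \otimes \paschkedual{\A}{\B}$. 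This yields a *-isomorphism $\alpha : \paschkedual{\A}{\B} \to \mat_2 \otimes \paschkedual{\A}{\B}$, and the first identity translates to $\alpha(v) = v \oplus z_0$. Writing $z_0 = e^{i\theta_0}$, the norm-continuous path $t \mapsto v \oplus e^{i(1-t)\theta_0}$ in $\mat_2 \otimes \paschkedual{\A}{\B}$ witnesses $v \oplus z_0 \sim_h v \oplus 1$, which combined with $v \oplus 1 \sim_h 1 \oplus 1$ (from the initial assumption on $u$) gives $\alpha(v) \sim_h \alpha(1)$, whence $v \sim_h 1$ in $\paschkedual{\A}{\B}$ upon applying $\alpha^{-1}$. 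The main obstacle I anticipate is less any individual technical step than the careful identification of $\mat_2 \otimes \paschkedual{\A}{\B}$ with the commutant of $(\pi \circ \phi \oplus \pi \circ \phi)(\A)$ inside $\C(\B)$ and the verification that $\alpha$ so defined is a *-isomorphism compatible with the hypothesis's interpretation of $v \oplus 1$.
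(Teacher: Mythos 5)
Your proof is correct, but it takes a genuinely different route from the paper's proof of \Cref{thm:k1-injectivity-generic}. The paper works with the inclusion $\i : C^*(\pi\circ\phi(\A), u) \hookrightarrow \C(\B)$ (fullness coming from \cite[Lemma~2.6]{LN-2020-IEOT}) compared against an abstract unital trivial absorbing extension $\sigma$ of that same algebra, and then must invoke \cite[Lemma~2.3]{LN-2020-IEOT} to establish that $\pi\circ\sigma(u) \sim_h 1$ in $\paschkedual{\A}{\B}$. Your argument instead transplants the technique the paper uses in \Cref{prop:paschke-dual-purely-infinite} (where it is applied to positive elements to prove pure infiniteness): you pass to $C(X) \otimes \A$ with $X = \spec(v)$, use $\phi_v(f \otimes a) \defeq f(v)(\pi\circ\phi(a))$ and the concrete scalar evaluation $\phi_{z_0}$, and then use the CFP-induced absorption $\phi_v \sim \phi_v \oplus \phi_{z_0}$. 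The payoff is that $\phi_{z_0}(\id_X \otimes 1) = z_0$ is a scalar, so the homotopy of the ``extra'' summand to $1$ inside the Paschke dual is free, and the auxiliary lemma is no longer needed. What the paper's approach buys in exchange is a slightly more self-contained framework that stays entirely with the single algebra $C^*(\pi\circ\phi(\A),u)$ and matches the $\sigma$-side of the BDF sum against one trivialization. One small remark on your reduction of the general-$n$ statement: applying the $n=1$ case to $(\mat_n \otimes \A, \B)$ does work, but it requires you to verify that the hypothesis (every unitary homotopic to a strongly full one) transports to $\paschkedual{(\mat_n\otimes\A)}{\B} \cong \mat_n \otimes \paschkedual{\A}{\B}$; this is true because the isomorphism $\paschkedual{\A}{\B} \cong \mat_n \otimes \paschkedual{\A}{\B}$ furnished by \Cref{lem:paschke-dual-properly-infinite} is, under the identification $\mat_n \otimes \C(\B) \cong \C(\B)$, conjugation by a unitary of $\C(\B)$ and hence preserves strong fullness --- but the paper's reduction (transporting the $K_1$-relation rather than the hypothesis) avoids having to check this at all.
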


\begin{proof}
  Clearly, $\ko$-injectivity will follow from the more specific statement.
  By \Cref{lem:paschke-dual-properly-infinite}, $\paschkedual{\A}{\B}$ contains a unital copy of $\mathcal{O}_2$, and so $1 \oplus 1 \sim 1$.
  Therefore, for all $n \in \nat$, $\paschkedual{\A}{\B} \cong \mat_n \otimes \paschkedual{\A}{\B}$.
  Hence it suffices to establish injectivity for the map in the $n = 1$ case: \begin{equation*}
    U(\paschkedual{\A}{\B})/U(\paschkedual{\A}{\B})_0 \to U(\mat_2 \otimes \paschkedual{\A}{\B})/U(\mat_2 \otimes \paschkedual{\A}{\B})_0.
  \end{equation*}

  Let $u \in \paschkedual{\A}{\B}$ be a unitary for which
  \begin{equation*}
    u \oplus 1 \sim_h 1 \oplus 1
  \end{equation*} in $\mat_2 \otimes \paschkedual{\A}{\B}$.

  By hypothesis, $u$ is homotopic, in $\paschkedual{\A}{\B}$, to a unitary which is strongly full in $\C(\B)$.
  Therefore, we may assume, without loss of generality, that $u$ is strongly full in $\C(\B)$.
  By \cite[Lemma~2.6]{LN-2020-IEOT}, $C^{*}(\pi \circ \phi(\A),u)$ is strongly full in $\C(\B)$.
  Hence, the inclusion map
  \begin{equation*}
    \i : C^{*}(\pi \circ \phi(\A),u) \hookrightarrow \C(\B)
  \end{equation*}
  is a full extension.  Since $\B$ has the corona factorization property, and since $C^*(\pi \circ \phi(\A), u)$ is nuclear (being a quotient of $C(\mathbb{S}^1) \otimes \A$), $\i$ is a unital absorbing extension.

  Let $\sigma : C^{*}(\pi \circ \phi(\A),u) \to \Mul(\B)$ be a unital trivial absorbing extension.
  Since the restriction $\sigma |_{\pi \circ \phi(\A)}$ is also a unital trivial absorbing extension, conjugating
  $\sigma$ by a unitary if necessary, we may assume that the restriction of $\pi \circ \sigma$ to $\pi \circ \phi(\A)$ is the identity
  map.

  By \cite[Lemma 2.3]{LN-2020-IEOT},
  \begin{equation*}
    \pi \circ \sigma(u) \sim_h 1
  \end{equation*}
  in $\paschkedual{\A}{\B}$.

  Since $\i$ is absorbing,
  \begin{equation*}
    \i \oplus (\pi \circ \sigma) \sim \i.
  \end{equation*}

  Consequently, there is an isometry $\tilde{v} \in \mat_2 \otimes \Mul(\B)$ such that $\tilde{v}^{*} \tilde{v} = 1 \oplus 1$, $\tilde{v} \tilde{v}^{*} = 1 \oplus 0$, and for $v \defeq \pi(\tilde{v})$,
  \begin{equation*}
    v (\i \oplus (\pi \circ \sigma)) v^{*} = \i \oplus 0.
  \end{equation*}
  Therefore,
  \begin{equation*}
    v(a \oplus a)v^{*} = a \oplus 0
  \end{equation*} for all $a \in \pi \circ \phi(\A)$, and also
  \begin{equation*}
    v(u \oplus (\pi \circ \sigma(u)))v^{*} = u \oplus 0.
  \end{equation*}
  Thus, since $vv^{*} = 1 \oplus 0$,
  \begin{equation*}
    v(a \oplus a) = (a \oplus 0)v = (a \oplus a) v.
  \end{equation*} for all $a \in \pi \circ \phi(\A)$, and therefore,
  \begin{equation*}
    v \in \mat_2 \otimes \paschkedual{\A}{\B}.
  \end{equation*}

  Moreover, we also have
  \begin{equation*}
    u \oplus (\pi \circ \sigma (u)) \sim_h u \oplus 1 \sim_h 1 \oplus 1
  \end{equation*}
  within $\mat_2 \otimes \paschkedual{\A}{\B}$.
  Conjugating this continuous path of unitaries by $v$, we obtain
  \begin{equation*}
    u \sim_h 1
  \end{equation*} within $\paschkedual{\A}{\B}$.
\end{proof}

\section{The Paschke dual for simple nuclear \cstar-algebras}
\label{sec:paschke-dual-simple}

In this section, we prove our main result that the Paschke dual algebra $\paschkedual{\A}{\B}$ is $K_1$ injective, when $\A$, $\B$ are separable simple \cstar-algebras with $\A$ unital and nuclear, and $\B$ stable and $\Z$-stable.
Since this section is very long and technical, we begin by providing a rough summary of the main ideas of the argument.
We advise the reader to read this summary first and to use it as a guide while going through the technical details.

The main strategy of our argument is to use \Cref{thm:k1-injectivity-generic}.
For this, for some unital trivial absorbing extension $\phi : \A \to \Mul(\B)$, it suffices to show that given any $U \in \paschkedual{\A}{\B} := (\pi \circ \phi (\A))'$ there is a unitary $V$ homotopic to $1$ in $\paschkedual{\A}{\B}$ such that $VU$ is strongly full in $\C(\B)$.

Firstly, we summarize what we did in the paper \cite{LN-2020-IEOT} because the argument is simpler, and understanding it should help the reader better comprehend our modifications necessary for the current argument.
In \cite{LN-2020-IEOT}, we assumed that $\A$, $\B$ were separable simple \cstar-algebras with $\A$ unital and nuclear, $\B$ stable and having strict comparison, $T(\B)$ having finitely many extreme points, and for which there exists an embedding $\rho : \A \hookrightarrow \B$.
Let $p \defeq \rho(1_{\A}) \in \B$.
Then there exists a sequence $\{ p_k \}_{k=0}^{\infty}$  of pairwise orthogonal projections in $\B$ such that
\begin{equation*}
  p_0 = p \text{ and } p_j \sim p_k \text{ for all } j,k,
\end{equation*}
and
\begin{equation*}
  \sum_{k=0}^{\infty} p_k = 1_{\Mul(\B)},
\end{equation*}
where the sum converges strictly in $\Mul(\B)$.
For all $j \geq 0$, let $v_j \in \B$ be a partial isometry in $\B$ for which $v_j^*v_j = p_0$ and $v_j v_j^* = p_j$.
Let $\phi : \A \rightarrow \Mul(\B)$ be the unital *-monomorphism given by
\begin{equation*}
  \phi(a) \defeq \sum_{j=0}^{\infty} v_j \rho(a) v_j^*.
\end{equation*}
Then $\phi$ is a (unitally) absorbing extension.
(This is the Lin extension.)
We realize the Paschke dual algebra as $\paschkedual{\A}{\B} = (\pi \circ \phi(\A))'$.
Following the strategy of \Cref{thm:k1-injectivity-generic}, given a unitary $ U\in \paschkedual{\A}{\B}$, we want to find a unitary $V \in \paschkedual{\A}{\B}$, which is homotopic to $1$ in $\paschkedual{\A}{\B}$, such that $VU$ is strongly full in $\C(\B)$.
Notice that by our concrete choice of $\phi$, for every diagonal unitary of the form $Z \defeq \sum_{j=0}^{\infty} \alpha_j p_j$ ($\alpha_j \in S^1$), $V \defeq \pi(Z) \in \paschkedual{\A}{\B}$ and $\pi(Z)$ is a unitary which is homotopic to $1_{\paschkedual{\A}{\B}}$ in $\paschkedual{\A}{\B}$.
Using that $\B$ has strict comparison and $T(\B)$ has finitely many extreme points, we showed that we could choose $\alpha_j \in S^1$ so that $\pi(Z) U$ is strongly full in $\C(\B)$.
This complicated argument can be simply (and misleadingly) described as choosing $\{ \alpha_j \}$ so that we could ``distribute mass over the spectrum of $\pi(Z)U$.''
Since $U$ was arbitrary, by applying \Cref{thm:k1-injectivity-generic}, we have that $\paschkedual{\A}{\B}$ is $K_1$ injective.

We emphasize that the assumptions made in \cite{LN-2020-IEOT} were highly restrictive.
That $\A$ is assumed to be unitally embedded into $\B$ is a great constraint, since, first of all this implies that $\B$ has a nonzero projection, and in the theory of simple \cstar-algebras, there are many interesting regular \cstar-algebras $\B$ which have no projection other than zero.
More seriously, this rules out whole categories of examples --- for instance, the case where $\A$ is purely infinite and $\B$ is stably finite.
The requirement that $T(\B)$ has finite extreme boundary is another enormous constraint, since we know that every metrizable Choquet simplex can be realized as $T(\B)$.
Note that this last requirement was used for our method for getting strong fullness of $\pi(Z) U$.

In the present paper, we remove the aforementioned restrictions with the additional assumption of Jiang--Su or $\Z$-stability of $\B$.
We mention here (and in a few paragraphs) that Jiang--Su stability is mainly used as a tool to remove the finite extreme boundary condition on $T(\B)$.
The other restrictions can be removed without Jiang--Su stability.
In fact, say that $\phi : \A \rightarrow \Mul(\B)$ is a unital trivial absorbing extension (and we do not assume that $\B$ is $\Z$-stable).
Let $\{ e_n \}_{n=1}^{\infty}$ be an approximate unit for $\B$ with $e_{n+1} e_n = e_n$ for all $n$ and which quasicentralizes $\phi(\A)$ ``fast enough'' (as in \Cref{lem:QuasicentralAU} for a countable dense subset of $\phi(\A)$), and let $\{ \alpha_n \}_{n=1}^{\infty}$ be a sequence in $S^1$ which has a ``close neighbors'' property $\abs{\alpha_n - \alpha_{n+1}} \rightarrow 0$.
Then by \Cref{lem:QuasicentralAU,lem:Apr2720216AM}, if we define $Z =_{df} \sum_{n=1}^{\infty} \alpha_n (e_n - e_{n-1})$, then $\pi(Z)$ is a unitary in $(\pi \circ \phi(\A))' = \paschkedual{\A}{\B}$.
Note that $\{ e_n \}$ need not consist of projections and $Z$ itself need not be a unitary.
Moreover, we can choose the sequence $\{ \alpha_n \}$ so that $\pi(Z)$ is homotopic to $1$ in $(\pi \circ \phi(\A))'$.
Note that here, there are no assumptions that $\A$ be embeddable in $\B$, nor that $\B$ has a nonzero projection.
In fact, we can even start with an arbitrary absorbing extension $\phi$ (and not one in a specific form like the Lin extension).
The problem arises when trying to choose the sequence $\{ \alpha_n \}$ so that $\pi(Z) U$ will be strongly full.
(This is a problem of ``distributing mass over the spectrum of $\pi(Z)U$''.)
With this approach, we would need that $T(\B)$ have finite extreme boundary.

Thus, we need to use a more sophisticated approach with the additional assumption that $\B$ be $\Z$-stable.
Since $\B \cong \B \otimes \Z$, we work with $\B \otimes \Z$.
Firstly, in the new approach, we can again realize the Paschke dual algebra $\paschkedual{\A}{\B}$ using an arbitrary unital absorbing trivial extension $\phi : \A \rightarrow \Mul(\B \otimes \Z)$.
(So $\paschkedual{\A}{\B} = (\pi \circ \phi(\A))'$.)
The operator $Z \in \Mul(\B \otimes \Z)$ that we construct will have the complicated form
\begin{equation}
  \label{equ:ComplicatedZExpr}
  Z = \sum_{k=1}^{\infty} u_k ((e_k - e_{k-1}) \otimes z_k) u_k^*,
\end{equation}
where $\{ e_n \}$ is an approximate unit for $\B$ with $e_{n+1} e_n = e_n$ for all $n$, $z_k \in \Z$ is a unitary and $u_k \in \Mul(\B) \otimes \Z$ is a unitary for all $k$.
These operators will have further properties to be specified below.  

To ensure that $\pi(Z)$ is a unitary, we need to impose conditions on the $u_k$ and $z_k$ in (\ref{equ:ComplicatedZExpr}).
This is essentially \Cref{lem:AUUnitary}, which generalizes the ``close neighbors''
condition of \Cref{lem:Apr2720216AM}.
Note that, again, $\{ e_n \}$ need not consist of projections and $Z$ itself need not be a unitary.

To ensure that $\pi(Z) \in (\pi \circ \phi(\A))' = \paschkedual{\A}{\B}$, we will require that $\{ e_n \}$ quasicentralizes a certain set ``quickly enough'', but also, we are forced to use the $\Z$-stability property.
The complicated main result for this is \Cref{lem:Apr120213AM}, which again imposes conditions on the $u_k$, $z_k$, and $e_k$.
We here roughly summarize some of the key ingredients to \Cref{lem:Apr120213AM}:
\begin{enumerate}
\item\label{item:strict-lower-semicontinuous-extension}
  From standard results about $\Z$-stable \cstar-algebras, we get *-monomorphisms
  $\iota, \Phi : \B \rightarrow \B \otimes \Z$ for which $\iota$ has special form $\iota = id_{\B} \otimes 1_{\Z}$ and $\Phi$ is a *-isomorphism, and we get 
  a sequence of unitaries $\{ u_n \}$ in $\Mul(\B) \otimes \Z$ which witnesses that $\iota$ and $\Phi$ are approximately unitarily equivalent  (\Cref{lem:JiangSuMaps}).
  Moreover, there are unique injective strictly lower semicontinuous extensions $\Mul(\B) \rightarrow \Mul(\B \otimes \Z)$, which we also denote by $\iota$, $\Phi$ respectively (here the extension $\Phi$ will also be an isomorphism).
\item\label{item:quasicentral-approximate-unit}
  Let $S_0 \subseteq \Mul(\B \otimes \Z)$ be a countable set which is dense in $\phi(\A)$, and let $S =_{df} \Phi^{-1}(S_0) \subseteq \Mul(\B)$.
  We require that $\{ e_n \}$ quasicentralizes $S$ at a ``fast rate" as 
  in \Cref{lem:QuasicentralAU}.
\item\label{item:bidiagonalizable}
  We use \Cref{thm:BidiagonalDecomp}, which says that, up to a compact element (i.e., element in $\B \otimes \Z$), every positive operator in $\Mul(\B \otimes \Z)$ is a sum of two diagonal operators (with diagonal entries in $\B \otimes \Z$).
  A key fact is that by \ref{item:strict-lower-semicontinuous-extension} (or \Cref{lem:JiangSuMaps}), for each entry $b$ (in either one of the diagonals), there exists $b' \in \B$ such that $u_n b u_n^* \rightarrow b' \otimes 1_{\Z}$.
\item\label{item:combining} The above \ref{item:strict-lower-semicontinuous-extension}, \ref{item:quasicentral-approximate-unit} and \ref{item:bidiagonalizable} are the main inputs for getting an operator $Z$, as in \eqref{equ:ComplicatedZExpr} so that $\pi(Z) \in (\pi \circ \phi (\A))' = \paschkedual{\A}{\B}$.
As mentioned, the main technique is summarized in \Cref{lem:Apr120213AM}.
\end{enumerate}
It is not hard to see that, with appropriate modification, $\pi(Z) \sim_h 1$ in $(\pi \circ \phi(\A))'$.
More precisely, one moves to $1$ by a path that connects the $z_k$ to $1_{\Z}$.
Thus, there are further restrictions on the $z_k$ which will be chosen to be the unitaries $v_{\Lambda} \in \dimdrop$ which are described two paragraphs before \Cref{lem:Apr720216AM}.

Finally, we need to ensure that $\pi(Z)U$ is strongly full.
As mentioned before, this is the main reason that we require $\Z$-stability (without which, we require the highly restrictive condition that $T(\B)$ has finite extreme boundary), and thus, this is also the main reason for the complicated definition of $Z$ in \eqref{equ:ComplicatedZExpr} and the complicated conditions to establish
$\pi(Z) \in \paschkedual{\A}{\B}$ in \Cref{lem:Apr120213AM} (as discussed in the previous paragraph).
The main results establishing strong fullness are \Cref{lem:MainTechnicalLemma,lem:PreTechnicalLemma}.  Perhaps the most technical lemmas in the paper (\Cref{lem:Apr720216AM,lem:Oct1920186AM,lem:standard-chain-approx-1,lem:standard-chain-approx-2,lem:laurent-uniform-continuity,lem:16u,lem:May2920215PM,lem:Apr1720216AM,lem:17b}) are preliminary results that lead to the central results \Cref{lem:MainTechnicalLemma,lem:PreTechnicalLemma}.
Here is a rough intuition:  To establish strong fullness is a matter of ``distributing mass  over the spectrum of $\pi(Z) U$'' (or, since we have strict comparison, ``distributing tracial mass'').
In the setting of our previous paper \cite{LN-2020-IEOT}, since $Z$ has the form of a diagonal $\sum_j \alpha_j p_j$ and since we have only freedom to vary the constants $\alpha_j$,  we can only ``distribute mass vertically'' and must assume the restrictive condition that $T(\B)$ has finite extreme boundary.
But now that we have $\Z$-stability of $\B$, and now that $Z$ has the form \eqref{equ:ComplicatedZExpr}, we have an ``extra direction'' and can now ``distribute the mass horizontally (along $\Z$) as well as vertically''.
In particular, in the expression \eqref{equ:ComplicatedZExpr}, the unitaries $z_k$ allow us to ``distribute the mass horizontally'' and allow us to achieve the crucial step \Cref{lem:MainTechnicalLemma} with arbitrary metrizable Choquet simplexes $T(\B)$.
Finally, recall that we will be choosing the $z_k$ to have the form $v_{\Lambda} \in \dimdrop$ as in two paragraphs before \Cref{lem:Apr720216AM}.

The above are the main ideas of the argument for $K_1$ injectivity of $\paschkedual{\A}{\B}$.
We finish with some extra remarks, which maybe helpful to the reader.   
Firstly, the technical \Cref{lem:Apr720216AM,lem:Oct1920186AM,lem:standard-chain-approx-1,lem:standard-chain-approx-2,lem:laurent-uniform-continuity,lem:16u,lem:May2920215PM,lem:Apr1720216AM,lem:17b} are essentially at the level of ``hard exercises'' in elementary operator theory;
however, because of the complexity due to many factors, we have opted to write out the details, thus lengthening the paper.
The reader may want to skip these results on the first reading.
Next, because of the generality of our result, there are many additional complicating technical details.
For instance, we note that in \eqref{equ:ComplicatedZExpr}, the sum, for $Z$, is not even a finite sum of diagonal operators.
However, it has sufficiently nice structure for us to be able to work with it;
in particular, the sum is an element of $\mathfrak{S}$, where $\mathfrak{S}$ is the collection of operators defined in the paragraph before \Cref{lem:Apr720216AM};
nonetheless, this leads to an extra layer of complications in our computations.
Finally, we actually prove a result that is slightly more general than what we need to apply \Cref{thm:k1-injectivity-generic} and get $\ko$-injectivity of $\paschkedual{\A}{\B}$.
We actually prove the following:
Say that $U \in \C(\B)$ is a unitary (not necessarily in $(\pi \circ \phi(\A))' = \paschkedual{\A}{\B}$).
Then we can find an operator $Z \in \Mul(\B)$ such that $\pi(Z)$ is a unitary in $\paschkedual{\A}{\B}$, $\pi(Z) \sim_h 1$ in $\paschkedual{\A}{\B}$, and $\pi(Z)U$ is strongly full in $\C(\B)$.

As a further guide, we describe how each lemma contributes to the outline described above.
\begin{itemize}
\item \Cref{lem:AUUnitary} is used to show that the image of \eqref{equ:ComplicatedZExpr} in the corona algebra is unitary.
\item \Cref{lem:QuasicentralAU,lem:Apr120213AM,lem:JiangSuMaps,lem:JiangSuMapsStrictExtension} and \Cref{thm:BidiagonalDecomp} are used to establish that the unitary we construct lies in $\paschkedual{\A}{\B} := (\pi \circ \phi(\A))'$, as well as setting up some of the structure involving the $\Z$-stability of $\B$.
\item \Cref{lem:Apr720216AM,lem:Oct1920186AM} are the foundational results to establish strong fullness.
\item \Cref{lem:standard-chain-approx-1,lem:standard-chain-approx-2,lem:laurent-uniform-continuity,lem:16u,lem:May2920215PM,lem:Apr1720216AM,lem:17b} are technical strict topology arguments involving Laurent polynonmials and approximate units.
\item  \Cref{lem:MainTechnicalLemma} combines \Cref{lem:Apr720216AM,lem:Oct1920186AM} and \Cref{lem:standard-chain-approx-1,lem:standard-chain-approx-2,lem:laurent-uniform-continuity,lem:16u,lem:May2920215PM,lem:Apr1720216AM,lem:17b} in order to provide the key stepping stone to strong fullness.
\item \Cref{lem:PreTechnicalLemma} puts all of the above together to construct the desired unitary $\pi(Z) \in \paschkedual{\A}{\B}$ which is homotopic to $1$ in $\paschkedual{\A}{\B}$ for which $\pi(Z)U$ is strongly full in $\C(\B)$.
\end{itemize}

We note that, until \Cref{lem:MainTechnicalLemma}, we do not use \Cref{lem:AUUnitary,lem:QuasicentralAU,lem:JiangSuMaps,lem:JiangSuMapsStrictExtension,lem:Apr120213AM} and \Cref{thm:BidiagonalDecomp} except at the notation at the beginning of Subsection~\ref{sec:technical-lemmas} (or before \Cref{lem:Apr720216AM}).
In particular, we don't use \Cref{lem:Apr120213AM} until \Cref{lem:PreTechnicalLemma}.

We firstly fix some notation, which will be used for the rest of this paper.
For a separable simple \cstar-algebra $\B$ and for a nonzero element $e \in \Ped(\B)_+$
(the Pedersen ideal of $\B$), we let
$T_e(\B)$ denote the set of all densely defined, norm-lower semicontinuous traces
$\tau$ on $\B_+$ such that $\tau(e) = 1$.  Recall that $T_e(\B)$, with the topology
of pointwise convergence on $\Ped(\B)$, is a compact convex set.  In fact, it is
a Choquet simplex.  All the results and arguments in this paper are independent of
the choice of normalizing element $e \in \Ped(\B)_+ - \{ 0 \}$.  Hence, we will
usually drop the $e$ and simply write $T(\B)$.
Recall also, that every $\tau \in T(\B)$ extends uniquely to a
strictly lower semicontinuous trace
$\Mul(\B)_+ \rightarrow [0, \infty]$, which we will also denote by ``$\tau$''.

Recall next that for all
$\tau \in T(\B)$ and for any $a \in \Mul(\B)_+$,
\begin{equation*}
  d_{\tau}(a) \defeq \lim_{n\rightarrow \infty} \tau(a^{1/n}) \in [0, \infty].
\end{equation*}

For $\delta > 0$,
let $f_{\delta} : [0, \infty) \rightarrow [0,1]$ be the unique continuous function for which
\begin{equation*}
  f_{\delta}(t)
  =
  \begin{cases}
    1 & t \in [\delta, \infty) \\
    0 & t = 0\\
    \makebox{linear on  } & [0, \delta].
  \end{cases}
\end{equation*}

In what follows, for elements $a,b$ in a \cstar-algebra, we use $a \approx_{\epsilon} b$ to denote $\norm{ a-b } < \epsilon$.

\begin{lemma}
  Let $\B$ be a nonunital $\sigma$-unital \cstar-algebra, and let $\{ e_k \}$ be
  an approximate unit for $\B$ for which
  \begin{equation*}
    e_{k+1} e_k = e_k
  \end{equation*}
  for all $k$. Let $\E$ be a unital \cstar-algebra and let $\{ f_k \}$ be an approximate unit for $\B \otimes \E$ for which
  \begin{equation*}
    f_{k+1} f_k = f_k
  \end{equation*}
  for all $k$.

  Let $\{ u_k \}$ be a sequence of unitaries in $\B \otimes \E$,
  $\{ z_k \}$ be a sequence of unitaries in $\E$, and
  let $\{ \epsilon_k \}$ be a sequence in $(0,1)$ for which
  $\sum_{k=1}^{\infty} \epsilon_k < \infty$.

  Suppose that
  \begin{equation}
    \label{eq:e_k-f_k-u_k-epsilon_k-approx}
    u_k (e_j \otimes 1) u_k^* \approx_{\epsilon_k} f_j
  \end{equation}
  for all $j \le k$,
  and suppose that
  for all $\epsilon > 0$,
  there exists an $N \geq 1$ where for all $k \geq N$,
  either
  \begin{equation*}
    u_k = u_{k+1} \makebox{  and  } \norm{ z_k - z_{k+1} } < \epsilon.
  \end{equation*}
  or
  \begin{equation*}
    z_k = z_{k+1} \in \complex 1_{\E}.
  \end{equation*}

  Let
  \begin{equation*}
    u \defeq \sum_{k=1}^{\infty}u_k( (e_k - e_{k-1}) \otimes z_k)u_k^*
  \end{equation*}
  where the sum converges strictly in $\Mul(\B \otimes \E)$ (and where $e_0 \defeq 0$).

  Then $\pi(u)$ is a unitary in $\C(\B \otimes \E)$.
  \label{lem:AUUnitary}
\end{lemma}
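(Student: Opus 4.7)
Write $w_k \defeq u_k((e_k - e_{k-1}) \otimes z_k) u_k^*$ so that $u = \sum_k w_k$ is the strict limit. To show $\pi(u)$ is unitary it suffices to prove $u^*u - 1 \in \B \otimes \E$ and $uu^* - 1 \in \B \otimes \E$; by symmetry I focus on $u^*u$. Expanding $u^*u = \sum_{j,k} w_j^* w_k$, I split this into a diagonal part ($j = k$), a near-diagonal part ($|j-k| = 1$), and a far off-diagonal part ($|j-k| \ge 2$). The plan is to show (a) that the far off-diagonal sum lies in $\B \otimes \E$ by norm-absolute convergence, and (b) that the diagonal-plus-near-diagonal sum equals $1$ modulo $\B \otimes \E$ via the telescoping identity $\sum_{k=1}^N (f_k - f_{k-1})^2 + 2\sum_{k=1}^{N-1}(f_k - f_k^2) = f_N^2 \to 1$ strictly.

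\textbf{Far off-diagonal.} The assumption $e_{k+1} e_k = e_k$ and positivity give $e_j e_k = e_{\min(j,k)}$, so $(e_j - e_{j-1})(e_k - e_{k-1}) = 0$ for $|j-k| \ge 2$. For $j < k-1$, I expand $w_j^* w_k = u_j((e_j - e_{j-1}) \otimes z_j^*) u_j^* u_k ((e_k - e_{k-1}) \otimes z_k) u_k^*$ and use \eqref{eq:e_k-f_k-u_k-epsilon_k-approx} (applied with the index $j$ and with the unitaries $u_j$ and $u_k$, both of which conjugate $e_j \otimes 1$ to within $\epsilon_j$ and $\epsilon_k$ of $f_j$) to approximately commute $(e_j - e_{j-1}) \otimes 1$ past the unitary $u_j^* u_k$ with error $O(\epsilon_j + \epsilon_k)$; the main term then contains the vanishing product. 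Since each $w_j^* w_k$ lies in $\B \otimes \E$ and $\sum \epsilon_k < \infty$, the series $\sum_{|j-k| \ge 2} w_j^* w_k$ is norm-absolutely convergent with limit in $\B \otimes \E$.

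\textbf{Near-diagonal and telescoping.} Since $1 \otimes z_k$ is a unitary commuting with $(e_k - e_{k-1}) \otimes 1$, we have $w_k^* w_k = u_k((e_k - e_{k-1})^2 \otimes 1) u_k^* \approx_{O(\epsilon_k)} (f_k - f_{k-1})^2$ via \eqref{eq:e_k-f_k-u_k-epsilon_k-approx}. For $w_k^* w_{k+1}$ I use the dichotomy from the hypothesis: in case~1 ($u_k = u_{k+1}$ and $\norm{z_k - z_{k+1}} < \epsilon$) the product simplifies directly to $u_k((e_k - e_{k-1})(e_{k+1} - e_k) \otimes z_k^* z_{k+1}) u_k^* = u_k((e_k - e_k^2) \otimes z_k^* z_{k+1}) u_k^*$, close to $(f_k - f_k^2)$ up to an error controlled by $\epsilon_k$ and $\norm{z_k - z_{k+1}}$; in case~2 ($z_k = z_{k+1} \in \complex 1_\E$), the scalar cancels and $w_k^* w_{k+1}$ is the product of two $u$-conjugated differences, approximated by $(f_k - f_{k-1})(f_{k+1} - f_k) = f_k - f_k^2$ with error $O(\epsilon_k + \epsilon_{k+1})$. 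Applying the algebraic identity above (which is just the expansion of $f_N^2 = (\sum_{k=1}^N (f_k - f_{k-1}))^2$) and passing to the strict limit gives the diagonal-plus-near-diagonal sum $\equiv 1 \pmod{\B \otimes \E}$, so $u^*u - 1 \in \B \otimes \E$. A symmetric computation handles $uu^* - 1$.

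\textbf{Main obstacle.} The most delicate point is the compactness of the case-1 error $u_k((e_k - e_k^2) \otimes (z_k^* z_{k+1} - 1)) u_k^*$. The hypothesis only forces $\norm{z_k - z_{k+1}} \to 0$ (with no summability), so a priori the series of errors is not norm-absolutely convergent. The resolution uses the factorization $e_k - e_k^2 = (e_k - e_{k-1})(1 - e_k) = (1 - e_k)(e_k - e_{k-1})$, which via \eqref{eq:e_k-f_k-u_k-epsilon_k-approx} shows each error is approximately supported inside the band $f_k - f_{k-1}$ from both sides; the near-orthogonality $(f_k - f_{k-1})(f_{k'} - f_{k'-1}) = 0$ for $|k-k'| \ge 2$ then gives $\norm{\sum_{k \ge M} T_k} \le C \sup_{k \ge M} \norm{z_k - z_{k+1}} \to 0$, so the series is norm-Cauchy and its limit lies in $\B \otimes \E$.
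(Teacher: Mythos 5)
Your overall plan tracks the paper's proof closely: the same split into diagonal, near-diagonal, and far off-diagonal pieces, the same telescoping identity coming from $f_N^2 = \bigl(\sum_{k\le N}(f_k-f_{k-1})\bigr)^2$, the same dichotomy for $w_k^* w_{k+1}$, and your ``Main obstacle'' paragraph correctly identifies that the case-1 errors are only $o(1)$ rather than summable, and resolves it exactly as the paper does: by locating each error approximately inside the band $\overline{s_k(\B\otimes\E)s_k}$ and then using the band near-orthogonality together with an even/odd split so that the tail is bounded by a \emph{supremum} rather than a sum.

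The gap is in the far off-diagonal step. You claim that since $\norm{w_j^* w_k} = O(\epsilon_j + \epsilon_k)$ for $\abs{j-k}\geq 2$ and $\sum_k\epsilon_k<\infty$, the series $\sum_{\abs{j-k}\geq 2} w_j^* w_k$ is norm-absolutely convergent. That does not follow: for each fixed $j$, summing the bound $\epsilon_j+\epsilon_k$ over $k\geq j+2$ already yields infinity, so $\sum_{\abs{j-k}\geq 2}(\epsilon_j+\epsilon_k)$ diverges. The individual-term estimate is too weak to give absolute convergence, and one cannot conclude that the far off-diagonal lands in $\B\otimes\E$ this way.

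The paper avoids the problem by applying the very ``approximately supported in $s_k$'' idea, which you invoke only for the case-1 errors \emph{after} squaring, to the whole series \emph{before} squaring: one replaces each $w_k = u_k(r_k\otimes z_k)u_k^*$ by a nearby contraction $v_k\in\overline{s_k(\B\otimes\E)s_k}$ with $\norm{w_k-v_k}\leq 12\epsilon_k$, observes that $\pi\bigl(\sum_k w_k\bigr)=\pi\bigl(\sum_k v_k\bigr)$ because $\sum_k\norm{w_k-v_k}<\infty$, and then computes with $v=\sum_k v_k$, where $v_j v_k^*=0$ exactly whenever $\abs{j-k}\geq 2$ so there is no far off-diagonal at all. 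Alternatively, you could repair your argument in place by writing, for $\abs{j-k}\geq 2$,
\begin{equation*}
  w_j^* w_k = (w_j^*-v_j^*)\,w_k + v_j^*\,(w_k-v_k),
\end{equation*}
and summing each piece with one index held inside a uniformly bounded strictly convergent sum (so the other index picks up the summable factor $12\epsilon_j$ or $12\epsilon_k$). But as written, the norm-absolute convergence claim is false, and that part of the proof needs to be redone along one of these lines.
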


\begin{proof}
  Note that the sum defining $u$ converges strictly because of \eqref{eq:e_k-f_k-u_k-epsilon_k-approx}.
  Let $r_k \defeq e_k - e_{k-1}$, and notice that the condition $e_{n+1} e_n = e_n$ implies by induction (and positivity of $e_n$) that $e_n e_k = e_k = e_k e_n$ for all $n > k$.
  Consequently, $r_k r_n = r_n r_k$, and if $\abs{n-k} > 1$, then $r_k r_n = 0$.
  Of course, this also holds for $s_k \defeq f_k - f_{k-1}$.

  By hypothesis,
  \begin{equation*}
    u_k(r_k \otimes 1)u_k^{*} \approx_{2\epsilon_k} s_k.
  \end{equation*}
  Now,
  \begin{equation*}
    u_k (r_k \otimes z_k) u_k^{*} = u_k (r_k \otimes 1) (1 \otimes z_k) u_k^{*} \approx_{2\epsilon_k} s_k u_k (1 \otimes z_k) u_k^{*},
  \end{equation*}
  and similarly,
  \begin{equation*}
    u_k (r_k \otimes z_k) u_k^{*} = u_k (1 \otimes z_k) (r_k \otimes 1) u_k^{*} \approx_{2\epsilon_k} u_k (1 \otimes z_k) u_k^{*} s_k,
  \end{equation*}
  Therefore, there exist contractions $v_k \in \overline{s_k (\B \otimes \E) s_k}$ for which
  \begin{equation*}
    v_k \approx_{12\epsilon_k} u_k (r_k \otimes z_k) u_k^{*}.
  \end{equation*}

  Since $s_k s_n = 0 = s_n s_k$ whenever $\abs{n-k} > 1$, we also know $v_k v_n = 0 = v_n v_k$ whenever $\abs{n-k} > 1$.

  Moreover, for the strictly converging sum $v \defeq \sum_{k=1}^{\infty} v_k$, we know that for all $N$,
  \begin{equation*}
    \pi(v) = \pi\left(\sum_{k=N}^{\infty} v_k\right), \quad\text{and}\quad \pi(u) = \pi\left(\sum_{k=N}^{\infty} u_k (r_k \otimes z_k) u_k^{*} \right).
  \end{equation*}
  Because
  \begin{equation*}
    \norm*{\sum_{k=N}^{\infty} v_k - \sum_{k=N}^{\infty} u_k (r_k \otimes z_k) u_k^{*}} \le \sum_{k=N}^{\infty} \norm{v_k - u_k (r_k \otimes z_k) u_k^{*} } \le \sum_{k=N}^{\infty} 12 in ni \epsilon_k \to 0,
  \end{equation*}
  as $N \to \infty$, we obtain $\pi(v) = \pi(u)$.

  Consequently,
  \begin{align*}
    \pi(u)\pi(u)^{*} = \pi(v v^{*})
    &= \pi\left(\left( \sum_{k=1}^{\infty} v_k \right)\left( \sum_{j=1}^{\infty} v_j^{*} \right)\right) \\
    &= \pi\left( \sum_{k=1}^{\infty} v_k v_k^{*} + v_k v_{k+1}^{*} + v_{k+1} v_k^{*} \right) \\
    &= \pi\left( \sum_{k=1}^{\infty} s_k^2 + v_k v_{k+1}^{*} + v_{k+1} v_k^{*} \right),
  \end{align*}
  where the last inequality follows from an argument similar to the one which shows $\pi(v) = \pi(u)$ along with the estimate
  \begin{equation*}
    v_k v_k^{*} \approx_{24\epsilon_k} u_k (r_k \otimes z_k) u_k^{*} u_k (r_k \otimes z_k^{*}) u_k^{*} = u_k (r_k \otimes z_k) (r_k \otimes z_k^{*}) u_k^{*} = u_k (r_k^2 \otimes 1) u_k^{*} \approx_{4\epsilon_k} s_k^2.
  \end{equation*}

  We also have the estimate
  \begin{equation*}
    v_k v_{k+1}^{*} \approx_{24\epsilon_{k+1}} u_k (r_k \otimes z_k) u_k^{*} u_{k+1} (r_{k+1} \otimes z_{k+1}^{*}) u_{k+1}^{*}.
  \end{equation*}

  Let $\epsilon > 0$, and by hypothesis there is some $N$ such that for all $k \ge N$, either $u_k = u_{k+1}$ and $\norm{z_k - z_{k+1}} < \epsilon$, or $z_k = z_{k+1} \in \complex 1_{\E}$.

  In the former case, $u_k = u_{k+1}$ and $\norm{z_k z_{k+1}^{*} - 1} < \epsilon$ and so,
  \begin{align*}
    u_k (r_k \otimes z_k) u_k^{*} u_{k+1} (r_{k+1} \otimes z_{k+1}^{*}) u_{k+1}^{*}
    &= u_{k+1} (r_k r_{k+1} \otimes z_k z_{k+1}^{*}) u_{k+1}^{*} \\
    &\approx_{\epsilon} u_{k+1} (r_k r_{k+1} \otimes 1) u_{k+1}^{*} \\
    &\approx_{4\epsilon_{k+1}} s_k s_{k+1}.
  \end{align*}

  In the latter case, $z_k = z_{k+1} \in \complex 1_{\E}$ we we find
  \begin{align*}
    u_k (r_k \otimes z_k) u_k^{*} u_{k+1} (r_{k+1} \otimes z_{k+1}^{*}) u_{k+1}^{*}
    &= u_k (r_k \otimes 1) u_k^{*} u_{k+1} (r_{k+1} \otimes 1) u_{k+1}^{*} \\
    &\approx_{4\epsilon_{k+1}} s_k s_{k+1}.
  \end{align*}

  Increasing $N$ if necessary to ensure that $\epsilon_k < \epsilon$ for all $k \ge N$, then we obtain
  \begin{equation*}
    \norm{v_k v_{k+1}^{*} - s_k s_{k+1}} \le 24\epsilon_{k+1} + \epsilon + 4\epsilon_{k+1} < 29 \epsilon.
  \end{equation*}

  This leads to the following estimate on the norm of the sum
  \begin{align*}
    \norm*{ \sum_{k=2N}^{\infty} v_k v_{k+1}^{*} - s_k s_{k+1} }
    &\le \norm*{ \sum_{k=N}^{\infty} v_{2k} v_{2k+1}^{*} - s_{2k} s_{2k+1} } \\
    &\qquad + \norm*{ \sum_{k=N}^{\infty} v_{2k+1} v_{2k+2}^{*} - s_{2k+1} s_{2k+2} } \\
    &\le \sup_{k \ge N} \norm{v_{2k} v_{2k+1}^{*} - s_{2k} s_{2k+1}} \\
    &\qquad + \sup_{k \ge N} \norm{v_{2k+1} v_{2k+2}^{*} - s_{2k+1} s_{2k+2}} \\ &\le 58\epsilon,
  \end{align*}
  where the second inequality follows because $v_{2k} v_{2k+1}^{*} - s_{2k} s_{2k+1}$ live in orthogonal subalgebras as $k$ varies, and similarly for $v_{2k+1} v_{2k+2}^{*} - s_{2k+1} s_{2k+2}$.

  Since $\{f_k\}$ is an approximate unit for $\B \otimes \E$, $\sum_{k=1}^{\infty} s_k = 1$ with convergence in the strict topology.
  Therefore,
  \begin{equation*}
    1 = 1^2 = \sum_{k=1}^{\infty} s_k^2 + 2 s_k s_{k+1}.
  \end{equation*}

  Finally, we note that
  \begin{align*}
    \pi(uu^{*} - 1)
    &= \pi\left( \sum_{k=1}^{\infty} (s_k^2 + v_k v_{k+1}^{*} + v_{k+1} v_k^{*}) - (s_k^2 + 2 s_k s_{k+1}) \right) \\
    &= \pi \left( \sum_{k=2N}^{\infty} v_k v_{k+1}^{*} + v_{k+1} v_k^{*} - 2 s_k s_{k+1} \right).
  \end{align*}
  Therefore,
  \begin{equation*}
    \norm{\pi(uu^{*} - 1)} \le \norm*{ \sum_{k=N}^{\infty} v_k v_{k+1}^{*} + v_{k+1} v_k^{*} - 2 s_k s_{k+1} } \le 116\epsilon.
  \end{equation*}
  Since $\epsilon$ is arbitrary, $\norm{\pi(uu^{*} - 1)} = 0$.
  The same argument applied to $u^{*}$ proves $\norm{\pi(u^{*}u - 1)} = 0$, and hence $\pi(u)$ is unitary in $\C(\B \otimes \E)$.
\end{proof}

The next computation should be well-known, but we provide it for the convenience of the
reader.

\begin{lemma}
  \label{lem:QuasicentralAU}
  Let $\B$ be a nonunital $\sigma$-unital simple \cstar-algebra, and let
  $S \defeq \{ x_k \mid k \geq 1 \} \subseteq \Mul(\B)$
  be a countable set.

  Then there exists an approximate unit $\{ e_n \}$ for $\B$ such that
  \begin{equation*}
    e_{n+1} e_n = e_n
  \end{equation*}
  for all $n$,
  and
  $\{ e_n \}$ quasicentralizes $S$ in the following strong sense:
  For all $n$, for all $1 \leq k \leq n$,
  \begin{equation*}
    \norm{ e_n x_k - x_k e_n } < \frac{1}{2^n}.
  \end{equation*}

  Moreover, for any $\{ e_n \}$ as above, we have that
  \begin{equation*}
    \pi\left(\sum_{n=1}^{\infty} \alpha_n (e_n - e_{n-1})\right) \in \pi(S)'
  \end{equation*}
  for every bounded sequence $\{ \alpha_n \}$ of complex numbers.
\end{lemma}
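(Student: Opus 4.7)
For the existence of the approximate unit, the plan is to combine a ``telescoping'' unit from functional calculus on a strictly positive element with an inductive application of Arveson's quasicentral approximate unit theorem. Fix a strictly positive $h \in \B_+$ with $\norm{h} \le 1$ (available since $\B$ is $\sigma$-unital), choose $\delta_n \downarrow 0$ with $\delta_{n+1} \le \delta_n/2$, and let $g_n\colon [0,1] \to [0,1]$ be continuous with $g_n = 0$ on $[0,\delta_n/2]$ and $g_n = 1$ on $[\delta_n,1]$. Then $h_n \defeq g_n(h)$ form an approximate unit for $\B$ enjoying the strong telescoping property $h_m h_n = h_n$ whenever $\delta_m \le \delta_n/2$ (in particular for all $m \ge n+1$).

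I would then build $e_n$ inductively by choosing $e_n \in \operatorname{conv}\{h_m : m \ge M_{n-1}+2\}$, where the integers $M_0 \defeq 0 < M_1 < M_2 < \cdots$ are picked successively. At stage $n$, Arveson's quasicentral approximate unit theorem, applied within the tail approximate unit $\{h_m : m \ge M_{n-1}+2\}$ to the finite set $\{x_1,\ldots,x_n\} \subseteq \Mul(\B)$, yields such an $e_n$ with $\norm{e_n x_k - x_k e_n} < 2^{-n}$ for $1 \le k \le n$, together with $\norm{e_n h - h} < 2^{-n}$ (the latter propagating via $\overline{h\B} = \B$ to the approximate-unit property). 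The main obstacle, which this indexing resolves, is simultaneously enforcing the \emph{exact} identity $e_{n+1}e_n = e_n$ alongside the quasicentralizing rate: every $h_m$ with $m \ge M_{n-1}+2$ satisfies $h_m h_j = h_j$ for each $j \le M_{n-1}$ (by the choice of $\delta_n$'s), so $e_n h_j = h_j$ for every $h_j$ appearing in the convex combination $e_{n-1}$, yielding $e_n e_{n-1} = e_{n-1}$ automatically.

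For the moreover clause, set $M \defeq \sup_n |\alpha_n|$ and $a \defeq \sum_n \alpha_n(e_n - e_{n-1}) \in \Mul(\B)$. The sum converges strictly and $\norm{a} \le M$: since the $e_n$ pairwise commute (induction from $e_ne_{n-1}=e_{n-1}$ gives $e_n e_m = e_{\min(m,n)}$), this follows from the commutative functional calculus on $C^*(\{e_n\})$. Left and right multiplication by any $x_k \in \Mul(\B)$ being strictly continuous on bounded sets, we have
\[
[a,x_k] = \sum_n \alpha_n [e_n - e_{n-1}, x_k]
\]
in the strict topology. For fixed $k$ and $n > k$, the quasicentralizing estimate yields $\norm{[e_n - e_{n-1},x_k]} < 2^{-n} + 2^{-(n-1)} = 3 \cdot 2^{-n}$, so the tail $\sum_{n>k}$ converges absolutely in norm, landing in $\B$; together with the finite head $\sum_{n\le k}$ (already in $\B$), this forces $[a,x_k] \in \B$, i.e.\ $\pi(a)$ commutes with $\pi(x_k)$ in $\C(\B)$. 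Since $k$ is arbitrary, $\pi(a) \in \pi(S)'$.
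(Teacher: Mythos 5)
Your proof is correct, and the ``moreover'' clause follows essentially the same argument as the paper's: split the commutator $[a,x_k]$ at index $k$, estimate $\norm{[e_n - e_{n-1},x_k]} < 3\cdot 2^{-n}$ for $n > k$ from the quasicentralizing rate, and conclude that the series of commutator terms converges in norm inside $\B$. For the existence of the approximate unit, the paper simply cites Pedersen's theorem (\cite[Theorem~2.2]{Ped-1990}) to get an approximate unit satisfying $e_{n+1}e_n=e_n$ that quasicentralizes $S$, and then passes to a subsequence to enforce the rate $2^{-n}$ --- noting, as you do, that the telescoping relation is preserved under subsequences since $e_m e_n = e_n$ for all $m > n$. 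You instead re-derive Pedersen's theorem from scratch, combining a telescoping approximate unit from functional calculus on a strictly positive element with Arveson's quasicentral approximate unit theorem applied to tails, and you explain carefully how the indexing $M_{n-1}+2$ forces the exact identity $e_ne_{n-1}=e_{n-1}$ on the convex combinations. This is a correct, self-contained reconstruction of what the paper imports by citation, but it is not a different method --- it is Pedersen's argument made explicit.
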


\begin{proof}

  Let $S \defeq \{ x_k : k \geq 1 \}$.

  Let $\{ e_n \}$ be an approximate unit for $\B$ such that
  \begin{equation*}
    e_{n+1} e_n = e_n
  \end{equation*}
  for all $n$, and $\{ e_n \}$ quasicentralizes $\{ x_k \}$ (see \cite[Theorem~2.2]{Ped-1990});
  that is, for all $k$,
  \begin{equation*}
    \norm{ e_n x_k - x_k e_n } \rightarrow 0
  \end{equation*}
  as $n \rightarrow \infty$.

  Passing to a subsequence of $\{ e_n \}$ if necessary, we may assume that for all $n$, for all $1 \leq k \leq n$,
  \begin{equation*}
    \norm{ e_n x_k - x_k e_n } < \frac{1}{2^n}.
  \end{equation*}

  For all $n$, let
  \begin{equation*}
    r_n \defeq e_n - e_{n-1}.
  \end{equation*}

  Then, for each $k \in \nat$ let $b_k \defeq \sum_{n=k+1}^{\infty} \alpha_n r_n$, and notice
  \begin{equation*}
    x_k b_k - b_k x_k = \sum_{n=k+1}^{\infty} \alpha_n \big( (x_k e_n - e_n x_k) + (e_{n-1} x_k - x_k e_{n-1} ) \big).
  \end{equation*}
  But for $n > k$,
  \begin{equation*}
    \norm{(x_k e_n - e_n x_k) + (e_{n-1} x_k - x_k e_{n-1} )} \le \frac{3}{2^n},
  \end{equation*}
  and the sequence $\alpha_n$ is bounded.
  Hence the sum representing $x_k b_k - b_k x_k$ converges in norm, so $x_k b_k - b_k x_k \in \B$.
  Consequently, $x_k \left(\sum_{n=1}^{\infty} \alpha_n r_n \right) - \left(\sum_{n=1}^{\infty} \alpha_n r_n \right) x_k \in \B$,
  and therefore $\pi\left(\sum_{n=1}^{\infty} \alpha_n (e_n - e_{n-1})\right)$ commutes with each $\pi(x_k)$.
\end{proof}

We will need to use the following bidiagonal decomposition theorem:

\begin{theorem}
  Let $\B$ be a $\sigma$-unital \cstar-algebra, and let $\{ e_n \}_{n=1}^{\infty}$ be an approximate
  unit for $\B$ for which
  \begin{equation*}
    e_{n+1} e_n = e_n
  \end{equation*}
  for all $n \geq 1$.

  Let $\F \subset \Mul(\B)$ be a finite collection of positive elements and let $\epsilon > 0$ be given.
  Then there exist two subsequences $\{ m_k \}$ and $\{ n_k \}$ of the nonnegative integers with
  \begin{equation*}
    n_{k+1} < m_{k} < n_{k+2} < m_{k+1}
  \end{equation*}
  for all $k$ such that for all
  $X \in \F$, there exists a bounded sequence $\{ d_k \}$ of positive elements of $\B$, and
  there exists some $b \in \B_{SA}$ such that the following statements are true:

  \begin{enumerate}
  \item  $X = \sum_{k=1}^{\infty} d_k + b$, where the sum converges strictly.
  \item  $\norm{ b } < \epsilon$.
  \item  $d_k \in \overline{(e_{m_k} - e_{n_k}) \B (e_{m_k} - e_{n_k})}$, for all $k$.
  \end{enumerate}
  In the above, we define $n_1 \defeq 0$ and $e_0 \defeq 0$.

  Note that the above implies that for all $|k - l| \geq 2$,
  $d_k \perp d_l$. I.e., the above series is a bidiagonal series.
  \label{thm:BidiagonalDecomp}
\end{theorem}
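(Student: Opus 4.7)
The plan is to construct each $d_k$ as a compression $\phi_k^{1/2} X \phi_k^{1/2}$, where $\phi_k \defeq e_{n_{k+1}} - e_{n_k}$ lies in the hereditary subalgebra generated by $p_k \defeq e_{m_k} - e_{n_k}$, and the $\phi_k$'s form a partition of unity in the strict topology. A single choice of subsequences $\{n_k\},\{m_k\}$ will suffice for the entire finite set $\F$; only the sequence $\{d_k\}$ will depend on $X$.

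First, by applying Pedersen's quasicentral approximate unit theorem \cite[Theorem~2.2]{Ped-1990} to $\F$, I replace $\{e_n\}$ by a subsequence (which preserves $e_{n+1}e_n = e_n$) so that $\norm{[X,e_n]}$ decays at any prescribed rate as $n \to \infty$, uniformly over $X \in \F$. Then I pick interlacing indices $n_1 \defeq 0 < n_2 < m_1 < n_3 < m_2 < \cdots$ with $n_{k+1} < m_k < n_{k+2} < m_{k+1}$. The hypothesis $e_{n+1}e_n = e_n$ forces $\{e_n\}$ to be monotone increasing in the positive cone, so $\phi_k \ge 0$, $\phi_k \le p_k$ (since $p_k - \phi_k = e_{m_k} - e_{n_{k+1}} \ge 0$), $\phi_j\phi_k = 0$ whenever $|j-k|\ge 2$, and $\sum_{k=1}^N \phi_k = e_{n_{N+1}} \to 1$ strictly.

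For each $X \in \F$, set $d_k \defeq \phi_k^{1/2} X \phi_k^{1/2}$. These are positive elements of $\B$ with $\norm{d_k} \le \norm{X}$. The inclusion $\phi_k^{1/2} \in \overline{p_k \B p_k}$ (a consequence of $\phi_k \le p_k$) together with the invariance of the hereditary subalgebra $\overline{p_k \B p_k}$ under sandwiching by elements of $\Mul(\B)$ yields $d_k \in \overline{p_k \B p_k}$. The bidiagonality $d_j d_k = 0$ for $|j-k| \ge 2$ follows from $\phi_j^{1/2}\phi_k^{1/2} = 0$, which is derived from $\phi_j\phi_k = 0$ via continuous functional calculus.

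The main obstacle is bounding $b \defeq X - \sum_k d_k$. Applying the identity $X\phi_k^{1/2} = \phi_k^{1/2}X + [X,\phi_k^{1/2}]$ gives
\begin{equation*}
\sum_{k=1}^N d_k \;=\; e_{n_{N+1}} X \;+\; \sum_{k=1}^N \phi_k^{1/2}[X, \phi_k^{1/2}],
\end{equation*}
so, provided the commutator series converges in norm, letting $N \to \infty$ produces $\sum_k d_k = X + \sum_k \phi_k^{1/2}[X,\phi_k^{1/2}]$ strictly and hence $b = -\sum_k \phi_k^{1/2}[X, \phi_k^{1/2}] \in \B$. To bound $\norm{b}$, I combine the triangle inequality $\norm{[X,\phi_k]} \le \norm{[X,e_{n_{k+1}}]} + \norm{[X,e_{n_k}]}$ with a H\"older-type functional calculus estimate $\norm{[X, \phi_k^{1/2}]} \le C_X \norm{[X,\phi_k]}^{1/2}$, provable via the integral representation $\sqrt{a} = \pi^{-1}\int_0^\infty (a+t)^{-1} a \, t^{-1/2}\, dt$. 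By prescribing the quasicentrality rate in the first step fast enough that $\sum_k \norm{[X,\phi_k]}^{1/2} < \epsilon / C_X$ for every $X \in \F$, the error series converges absolutely in norm to some $b \in \B$ with $\norm{b} < \epsilon$, and self-adjointness of $b$ is automatic since $X$ and each $d_k$ are self-adjoint.
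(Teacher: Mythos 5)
Your argument has a fatal gap at the very first step: you pass to a ``quasicentral subsequence'' of $\{e_n\}$, and nothing in the hypothesis permits this. Pedersen's quasicentral approximate unit theorem produces a \emph{new} approximate unit taken from the \emph{convex hull} of $\{e_n\}$; it does not assert that any subsequence of $\{e_n\}$ is quasicentral, and in general none is. For example, with $\B = \K(\ell^2)$, $e_n$ the rank-$n$ spectral projections of the standard basis, and $X = 2 + S + S^* \in \Mul(\B)_+$ (where $S$ is the unilateral shift), one computes $\norm{[X,e_n]} \geq 1$ for every $n$, so no subsequence has decaying commutators. Moreover you cannot swap $\{e_n\}$ for a different approximate unit: the conclusion of the theorem refers to $e_{m_k}$ and $e_{n_k}$, where $\{m_k\}$ and $\{n_k\}$ are subsequences of the indices of the \emph{given} $\{e_n\}$. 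So your commutator estimate $\norm{b} \leq \sum_k C_X \norm{[X,\phi_k]}^{1/2}$ cannot be made small; indeed with the shift example above, $b = X - \sum_k \phi_k^{1/2} X \phi_k^{1/2}$ carries the entire off-diagonal content of $X$ and has norm comparable to $\norm{X}$, not $< \epsilon$. (The Hölder-type bound $\norm{[X,a^{1/2}]} \leq C_X \norm{[X,a]}^{1/2}$ itself is fine; the problem is that the right-hand side need not be small here.)

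The deeper issue is that the diagonal compression $d_k \defeq \phi_k^{1/2} X \phi_k^{1/2}$ captures only the block-diagonal part of $X$ with respect to the partition $\{\phi_k\}$; the overlap of the intervals $[n_k,m_k]$ demanded in the statement ($n_{k+1} < m_k < n_{k+2} < m_{k+1}$) is precisely there so that the $d_k$ can also absorb the near-off-diagonal part of $X$, which is not small without quasicentrality. The paper itself gives no direct proof, deferring to Kaftal--Ng--Zhang \cite[Theorem~4.2 and Remark~4.3]{KNZ-2017-CJM}, whose argument is a Berg/Voiculescu-type cutting: one uses that $Xe_n \in \B$ so $\norm{(1-e_m)Xe_n} \to 0$ as $m \to \infty$ to interlace $\{n_k\},\{m_k\}$ in a way that controls the off-diagonal tails in \emph{norm}, and then manufactures positivity of the $d_k$'s through a factorization of $X$ (a square-root or Cholesky-type device) rather than through symmetric compression by $\phi_k^{1/2}$. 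Neither the decay mechanism nor the positivity mechanism in your proposal is available without quasicentrality, so the approach does not go through.
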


\begin{proof}
  This is from \cite{KNZ-2017-CJM} Theorem 4.2, Remark 4.3 and their proofs.
\end{proof}

The Jiang--Su algebra $\Z$ \cite{JS-1999-AJM} is the unique simple unital nonelementary inductive limit of dimension drop algebras
with $\k$-theory invariant being the same as the complex numbers $\complex$, i.e.,
\begin{equation*}
  (\kz(\Z), \kz(\Z)_+, \ko(\Z), T(\Z)) = (\ints, \ints_+, 0, \{ pt \})
  = (\kz(\complex), \kz(\complex)_+, \ko(\complex), T(\complex)).
\end{equation*}
We let $\tau_{\Z}$ denote the unique tracial state of $\Z$.

Like the complex numbers $\complex$, $\Z \otimes \Z \cong \Z$.
A \cstar-algebra $\D$ is said to be \term{$\Z$-stable}  (or \term{Jiang--Su-stable}) if
$\D \cong \D \otimes \Z$.
Jiang--Su-stability is a regularity property which is
an axiom in the classification program for simple
amenable \cstar-algebras.
Indeed, if $\B$ is a simple $\Z$-stable \cstar-algebra, then either $\B$ is stably finite or purely infinite.
Also, if $\B$ is an exact simple $\Z$-stable \cstar-algebra, then $\B$ has \term{strict comparison of positive elements};
that is, for every $a,b \in (\B \otimes \K)_+$,
\begin{equation*}
  \text{if } d_{\tau}(a) < d_{\tau}(b) \text{ or } d_{\tau}(b) = \infty, \  \forall \tau \in T(\B), \text{ then } a \preceq b.
\end{equation*}
In fact, for simple separable nuclear \cstar-algebras, it is a question whether $\Z$-stability is equivalent to strict comparison of positive elements.

Good references for the Jiang--Su algebra and some of its basic properties are \cite{JS-1999-AJM,GJS-2000-CMB,Roer-2004-IJM}.

\begin{lemma}
  \label{lem:JiangSuMaps}
  Let $\B$ be a $\Z$-stable \cstar-algebra and let $\iota : \B \rightarrow \B \otimes \Z$ be the *-embedding
  given by
  \begin{equation*}
    \iota : d \mapsto d \otimes 1_{\Z}.
  \end{equation*}

  Then there exists a *-isomorphism $\Phi : \B \rightarrow \B \otimes \Z$ such that
  $\iota$ and $\Phi$ are approximately unitarily equivalent, i.e., there exists a sequence
  $\{ u_n \}$ of unitaries in $\Mul(\B \otimes \Z)$ for which
  \begin{equation*}
    u_n \iota(a) u_n^* \rightarrow \Phi(a)
  \end{equation*}
  for all $a \in \B$.
\end{lemma}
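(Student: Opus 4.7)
The result is a standard absorption consequence of the strong self-absorption of the Jiang--Su algebra $\Z$. The plan relies on two classical features of $\Z$: (a) there exists a *-isomorphism $\mu:\Z\otimes\Z\to\Z$; and (b) $\Z$ has the approximately inner half-flip, i.e., there are unitaries $w_n\in\Z\otimes\Z$ with $\|w_n(z\otimes 1_\Z)w_n^*-1_\Z\otimes z\|\to 0$ for every $z\in\Z$.

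First, we fix any *-isomorphism $\Phi:\B\to\B\otimes\Z$, which exists by $\Z$-stability of $\B$; this will serve as the isomorphism in the statement. The task then reduces to constructing unitaries $u_n\in\Mul(\B\otimes\Z)$ satisfying $\|u_n\iota(a)u_n^*-\Phi(a)\|\to 0$ for every $a\in\B$.

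The main idea is to double the $\Z$-factor: in $\B\otimes\Z\otimes\Z$ one has access to both the $\Z$ introduced by $\iota$ and the $\Z$ arising from $\Phi$. The half-flip unitaries $1_{\Mul(\B)}\otimes w_n$, viewed as multipliers of $\B\otimes\Z\otimes\Z$ via the canonical inclusion $\Mul(\B)\otimes\Z\otimes\Z\hookrightarrow\Mul(\B\otimes\Z\otimes\Z)$ (valid because $\Z\otimes\Z$ is unital), asymptotically interchange the two $\Z$-tensor slots. Composing with the isomorphism $\id_\B\otimes\mu:\B\otimes\Z\otimes\Z\to\B\otimes\Z$ and its extension to multiplier algebras, these unitaries push forward to the desired $u_n\in\Mul(\B\otimes\Z)$.

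The principal obstacle is the tensor-factor bookkeeping: one must verify that, under the identifications induced by $\id_\B\otimes\mu$, the embedding $a\mapsto a\otimes 1_\Z\otimes 1_\Z$ plays the role of $\iota$, the embedding $a\mapsto\Phi(a)\otimes 1_\Z$ plays the role of $\Phi$, and conjugation by $1_{\Mul(\B)}\otimes w_n$ carries the first to a norm approximation of the second. The only analytic input is the norm convergence of the half-flip in the unital algebra $\Z\otimes\Z$, which transfers without loss through the tensor operations and their multiplier extensions. This argument is essentially the Toms--Winter absorption theorem for strongly self-absorbing \cstar-algebras specialized to $\Z$, and for the statement recorded here it suffices to cite that result.
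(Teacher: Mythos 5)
Your argument as sketched does not go through, and the gap is at exactly the point you flag as the ``principal obstacle'': the tensor-factor bookkeeping. Track what your construction actually produces. You set $u_n = (\id_\B\otimes\mu)(1_{\Mul(\B)}\otimes w_n) = 1_{\Mul(\B)}\otimes\mu(w_n)$, a unitary of the form $1\otimes z_n$ with $z_n\in\Z$. Every such unitary commutes exactly with $\iota(\B)=\B\otimes 1_\Z$, so $u_n\iota(a)u_n^* = \iota(a)$ for all $n$, and this does not converge to $\Phi(a)$ unless $\iota$ is already an isomorphism. The underlying issue is that $a\otimes 1_\Z\otimes 1_\Z$ commutes with $1_{\Mul(\B)}\otimes w_n$, because $w_n$ only acts in the two $\Z$-slots while $a$ sits in the $\B$-slot; so the half-flip does nothing here. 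The half-flip helps when the element being conjugated has nontrivial content in one of the $\Z$-slots, which means you first have to apply $\Phi$ (or $\Phi\otimes\id_\Z$) to the data before the flip can move anything — and once you do that, the other side of the claimed identification $(a\mapsto\Phi(a)\otimes 1_\Z$ playing the role of $\Phi$) ceases to match, and one finds oneself needing a genuine one-sided approximate intertwining, not a single conjugation.

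In other words, the approximately inner half-flip of $\Z$ is a necessary ingredient but is not by itself sufficient; the bridge from the half-flip (plus $\Z\otimes\Z\cong\Z$) to the statement is precisely the content of Jiang--Su's Theorems~7.6 and~8.7 (cited in the paper) or, in the general strongly-self-absorbing framework, the Toms--Winter absorption theorem. Your final sentence — that it suffices to cite the Toms--Winter absorption theorem specialized to $\Z$ — is correct and is essentially the same route as the paper's citation to Jiang--Su, so if you drop the erroneous intermediate sketch and simply cite that result (noting that $\B$ is separable in our standing hypotheses, as separability is needed for the intertwining), the proof is fine.
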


\begin{proof}
  This follows from \cite{JS-1999-AJM} Theorems 7.6 and 8.7.
\end{proof}

\begin{lemma}
  Let $\B$ be a $\sigma$-unital
  $\Z$-stable \cstar-algebra and let $\iota, \Phi, \makebox{  and  } \{ u_n \}$ be as
  in \Cref{lem:JiangSuMaps}.

  $\iota$ (respectively $\Phi$)
  extends uniquely to a *-embedding (respectively *-isomorphism)
  $\Mul(\B) \hookrightarrow \Mul(\B
  \otimes \Z)$ which is strictly continuous on bounded sets, which we also denote by $\iota$ (respectively $\Phi$).

  Then
  \begin{equation*}
    u_n \iota(X) u_n^* \rightarrow \Phi(X)
  \end{equation*}
  and
  \begin{equation*}
    u_n^* \Phi(X) u_n \rightarrow \iota(X)
  \end{equation*}
  in the strict topology on $\Mul(\B \otimes \Z)$, for all $X \in \Mul(\B)$.
  \label{lem:JiangSuMapsStrictExtension}
\end{lemma}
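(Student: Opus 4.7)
The plan has two parts: establish the strictly continuous extensions, then verify the two strict convergences.

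For the extensions, I would invoke the standard fact that any nondegenerate *-homomorphism $\rho : \A \to \Mul(\D)$ (i.e., with $\rho(\A) \D$ dense in $\D$) extends uniquely to a *-homomorphism $\Mul(\A) \to \Mul(\D)$ that is strictly continuous on bounded sets; uniqueness is automatic because $\B$ is strictly dense in $\Mul(\B)$. For $\iota$ we have $\iota(\B)(\B \otimes \Z) = \B^2 \otimes \Z$, which is dense in $\B \otimes \Z$ by $\sigma$-unitality of $\B$, so $\iota$ is nondegenerate. For $\Phi$ nondegeneracy is immediate from surjectivity, and applying the same reasoning to $\Phi^{-1}$ yields a strictly continuous two-sided inverse, so the extended $\Phi$ remains a *-isomorphism.

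For the strict convergences, I would first rearrange the hypothesis: since $\norm{u_n \iota(a) u_n^* - \Phi(a)} \to 0$ for $a \in \B$, conjugating by the unitary $u_n^*$ yields $\norm{u_n^* \Phi(a) u_n - \iota(a)} \to 0$ as well, and both are equivalent to $\norm{u_n \iota(a) - \Phi(a) u_n} \to 0$ and $\norm{u_n^* \Phi(a) - \iota(a) u_n^*} \to 0$. Both target sequences $u_n \iota(X) u_n^*$ and $u_n^* \Phi(X) u_n$ are bounded by $\norm{X}$, so by the standard bounded-strict criterion it suffices to verify norm convergence against elements of a dense subset of $\B \otimes \Z$ on each side, with the right-multiplication side reduced to the left by applying the argument to $X^*$ and taking adjoints. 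For $u_n \iota(X) u_n^* \to \Phi(X)$, surjectivity of $\Phi$ lets us take $y = \Phi(b)$ with $b \in \B$, and using multiplicativity of the extension of $\iota$,
\begin{equation*}
  u_n \iota(X) u_n^* \Phi(b) = u_n \iota(X) \bigl(\iota(b) u_n^* + o(1)\bigr) = u_n \iota(Xb) u_n^* + o(1) \to \Phi(Xb),
\end{equation*}
where the last convergence uses $Xb \in \B$ and the norm statement at the level of $\B$.

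For $u_n^* \Phi(X) u_n \to \iota(X)$ the argument is asymmetric because $\iota$ is not surjective, so the natural dense subset $\Phi(\B) = \B \otimes \Z$ has no direct analogue; instead I would use the density of $\iota(\B) (\B \otimes \Z) = \B^2 \otimes \Z$ in $\B \otimes \Z$, testing on $y = \iota(a) z$ with $a \in \B$ and $z \in \B \otimes \Z$. Using $u_n \iota(a) = \Phi(a) u_n + o(1)$,
\begin{equation*}
  u_n^* \Phi(X) u_n \iota(a) = u_n^* \Phi(X) \bigl(\Phi(a) u_n + o(1)\bigr) = u_n^* \Phi(Xa) u_n + o(1) \to \iota(Xa),
\end{equation*}
in norm since $Xa \in \B$; multiplying by $z$ together with $\iota(X)\iota(a) = \iota(Xa)$ then gives $(u_n^* \Phi(X) u_n - \iota(X)) y \to 0$. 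The main obstacle is precisely this asymmetry: $\Phi$ being surjective while $\iota$ is not forces different dense subsets and different rearrangements of the approximate equivalence in the two directions, although in both cases the problem reduces to the norm convergence at the level of $\B$ furnished by \Cref{lem:JiangSuMaps}.
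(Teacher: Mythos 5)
Your argument is correct, and it takes a genuinely different and somewhat lighter route than the paper's. The paper's proof invokes the bidiagonal decomposition theorem (\Cref{thm:BidiagonalDecomp}, from KNZ) to write $X = \sum_k d_k + b$ with the $d_k \in \B$ supported on disjoint blocks of the approximate unit; the point there is that after multiplying by $f_N$, only finitely many $d_k$ survive (up to $\epsilon$), so the asymptotic intertwining of \Cref{lem:JiangSuMaps} can be applied to a finite sum of elements of $\B$. You instead avoid any decomposition of $X$ and work on the test vector: you absorb $u_n^*$ or $u_n$ through the test vector using the norm intertwining $u_n\iota(a) - \Phi(a)u_n \to 0$ for $a \in \B$, then use multiplicativity of the strictly continuous extensions to combine $\iota(X)\iota(b) = \iota(Xb) \in \iota(\B)$ (respectively $\Phi(X)\Phi(a) = \Phi(Xa) \in \Phi(\B)$), landing back in the range where \Cref{lem:JiangSuMaps} applies directly. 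The two ingredients you need — the standard extension theorem for nondegenerate morphisms, and the observation that both sequences are uniformly bounded so one may test on dense subsets — are all standard, so your argument is more self-contained and dispenses with the bidiagonal machinery. The paper's route has the minor advantage of reusing a tool it already needs elsewhere, but your proof is arguably the cleaner one for this lemma in isolation. Your handling of the asymmetry between the two directions (using $\Phi$'s surjectivity for the first, and density of $\iota(\B)(\B\otimes\Z)$ for the second, together with the reduction to one-sided convergence via adjoints of $X^*$) is correct and appropriately careful.
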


\begin{proof}
  Let $\{ e_n \}$ be an approximate unit for
  $\B$ for which
  \begin{equation*}
    e_{n+1} e_n = e_n
  \end{equation*}
  for all $n$.

  For all $n$, let $f_n \defeq \Phi(e_n) \in \B \otimes \Z$. Then $\{ f_n \}$
  is an approximate unit for $\B \otimes \Z$ and
  \begin{equation*}
    f_{n+1} f_n = f_n
  \end{equation*}
  for all $n$.

  Let $X \in \Mul(\B)$ be arbitrary.
  We may assume that $X \geq 0$ and $\norm{ X } = 1$.
  Let $\epsilon > 0$ be given, and let $N \geq 1$ be given.

  By \Cref{thm:BidiagonalDecomp}, there exist to subsequences
  $\{ m_k \}$ and $\{ n_k \}$ of the nonnegative integers with
  \begin{equation*}
    n_{k+1} < m_k < n_{k+2}  < m_{k+1}
  \end{equation*}
  for all $k$, such that there exists a sequence
  $\{ d_k \}$ of positive contractive elements of $\B$
  and $b \in \B$ for which the following statements are true:
  \begin{enumerate}
  \item[i.] $X = \sum_{k=1}^{\infty} d_k + b$ where the sum
    converges strictly.
  \item[ii.] $\norm{ b } < \frac{\epsilon}{10}$.
  \item[iii.] $d_k \in \her_{\B}(e_{m_k} - e_{n_k})$, for all $k$.
  \end{enumerate}

  Choose $K \geq 1$ so that $n_K > N + 1$.

  Choose $L \geq 1$ so that for all $l \geq L$
  \begin{equation*}
    u_l \left( \sum_{k=1}^{K+1} d_k \otimes 1 \right) u_l^*
    \approx_{\frac{\epsilon}{10}} \sum_{k=1}^{K+1} \Phi(d_k)
  \end{equation*}

  and

  \begin{equation*}
    u_l (e_{m_K} \otimes 1) u_l^* \approx_{\frac{\epsilon}{10}}
    f_{m_K}.
  \end{equation*}

  So for all $l \geq L$,

  \begin{align*}
    u_l \left( \sum_{k=1}^{\infty} d_k \otimes 1 \right) u_l^* f_N
    &= u_l \left( \sum_{k=1}^{\infty} d_k \otimes 1 \right) u_l^*
          f_{m_K} f_N \\
    &\approx_{\frac{\epsilon}{10}} u_l \left( \sum_{k=1}^{\infty} d_k \otimes 1 \right)(e_{m_K} \otimes 1)  u_l^* f_N \\
    &= u_l \left( \sum_{k=1}^{K+1} d_k \otimes 1 \right)(e_{m_K} \otimes 1) u_l^* f_N \\
    &\approx_{\frac{\epsilon}{5}}  \sum_{k=1}^{K+1} \Phi(d_k) f_N \\
    &= \sum_{k=1}^{\infty} \Phi(d_k) f_N \\
    &= \Phi\left(\sum_{k=1}^{\infty} d_k \right) f_N.
  \end{align*}

  Since $\norm{ b } < \frac{\epsilon}{10}$, for all $l \geq L$,
  \begin{equation*}
    u_l \iota(X) u_l^* f_N \approx_{\epsilon} \Phi(X) f_N.
  \end{equation*}

  Taking the adjoint of the above, we have that for all $l \geq L$,
  \begin{equation*}
    f_N u_l \iota(X) u_l^* \approx_{\epsilon} f_N \Phi(X).
  \end{equation*}

  By a similar argument, we can find $L' \geq 1$ such that for all
  $l \geq L'$,

  \begin{equation*}
    \iota(X) (e_N \otimes 1) \approx_{\epsilon} u_l^* \Phi(X) u_l (e_N
    \otimes 1)
  \end{equation*}
  and

  \begin{equation*}
    (e_N \otimes 1) \iota(X) \approx_{\epsilon}
    (e_N \otimes 1) u_l^* \Phi(X) u_l.
  \end{equation*}

  Since $\epsilon, N, X$ were arbitrary, we are done.
\end{proof}

The next lemma is our key result for constructing a unitary in the Paschke dual.
We refer to it in order to establish notation in Subsection~\ref{sec:technical-lemmas} but we do not use it again until \Cref{lem:PreTechnicalLemma}.

\begin{lemma}
  Let $\B$ be a nonunital $\sigma$-unital simple $\Z$-stable \cstar-algebra, and let
  $\iota, \Phi, \makebox{  and  } \{ u_k \}$ be as in \Cref{lem:JiangSuMaps}.
  Let
  $S_0 \subseteq \Mul(\B \otimes \Z)$ be a countable subset,
  and let $\{ e_n \}$ be an approximate unit for $\B$ which
  quasicentralizes $S \defeq \Phi^{-1}(S_0)$
  as in \Cref{lem:QuasicentralAU}.

  Let $\F \subseteq S_0$ be a finite subset.
  Then for every $\epsilon > 0$, there exists an $N \geq 1$ such that
  for all $n \geq N$ and for all $L \geq 1$, there exists $K \geq 1$ where
  for all contractive elements $z_0, ...., z_{L} \in \Z$,
  and for all $k \geq K$,

  \begin{equation*}
    \left[u_k \left(\sum_{j=0}^{L} (e_{n +j} - e_{n + j -1}) \otimes z_j\right) u_k^*,X
    \right] \approx_{\epsilon} 0
  \end{equation*}
  for all $X \in \F$.

  In the above, we are using the standard additive commutator
  $[x,y] \defeq xy - yx$.

  \label{lem:Apr120213AM}
\end{lemma}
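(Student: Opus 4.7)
The plan is to use the strict convergence $u_k^{*} X u_k \to \iota(Y)$ (with $Y \defeq \Phi^{-1}(X)$) from \Cref{lem:JiangSuMapsStrictExtension}, combined with the key commutation $[\iota(Y), 1_{\Mul(\B)} \otimes z] = 0$ for all $z \in \Z$ (since $\iota(Y) = Y \otimes 1_{\Z}$). Set $S_{\vec z} \defeq \sum_{j=0}^{L} (e_{n+j} - e_{n+j-1}) \otimes z_j \in \B \otimes \Z$. Since $\norm{[X, u_k S_{\vec z} u_k^{*}]} = \norm{[u_k^{*} X u_k, S_{\vec z}]}$, it suffices to bound this latter quantity, which I would decompose via the triangle inequality into a \emph{model} term $\norm{[\iota(Y), S_{\vec z}]}$ and an \emph{error} term $\norm{(u_k^{*} X u_k - \iota(Y)) S_{\vec z}} + \norm{S_{\vec z}(u_k^{*} X u_k - \iota(Y))}$.

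For the model term, the commutation of $1 \otimes z_j$ with $\iota(Y)$ collapses the inner commutator to $\sum_{j} [Y, e_{n+j} - e_{n+j-1}] \otimes z_j$. Let $M$ be large enough that $\F \subseteq \{X_1, \ldots, X_M\}$ in the fixed enumeration of $S_0$, and write $Y_i \defeq \Phi^{-1}(X_i)$; the strong quasicentralization from \Cref{lem:QuasicentralAU} gives $\norm{[e_m, Y_i]} < 2^{-m}$ for $m \geq i$. Taking $n \geq M+1$ and using the triangle inequality together with $\norm{z_j} \leq 1$, we obtain $\norm{[\iota(Y_i), S_{\vec z}]} \leq \sum_{j=0}^{L} 3 \cdot 2^{-(n+j)} < 6 \cdot 2^{-n}$ for each $i \leq M$, a bound independent of $L$ and $\vec z$. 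Choosing $N \geq M+1$ with $6 \cdot 2^{-N} < \epsilon/2$ handles this piece for all $n \geq N$.

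The main obstacle is bounding the error term uniformly in $\vec z$, which ranges over a norm-noncompact ball in $\Z^{L+1}$. Two observations resolve this. First, $S_{\vec z} = g S_{\vec z} = S_{\vec z} g$ for $g \defeq e_{n+L+1} \otimes 1_{\Z} \in \B \otimes \Z$, a single element independent of $\vec z$. Second, the orthogonality $(e_{n+i} - e_{n+i-1})(e_{n+j} - e_{n+j-1}) = 0$ when $|i - j| \geq 2$ (an easy consequence of $e_{k+1} e_k = e_k$) allows us to split $S_{\vec z}$ according to the parity of $j$ into two sums of mutually orthogonal terms, yielding $\norm{S_{\vec z}} \leq 2$ uniformly in $\vec z$. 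By \Cref{lem:JiangSuMapsStrictExtension}, strict convergence $u_k^{*} X_i u_k \to \iota(Y_i)$ applied to the fixed element $g$ furnishes $K$ such that for all $k \geq K$ and all $i \leq M$, both $\norm{(u_k^{*} X_i u_k - \iota(Y_i)) g} < \epsilon/8$ and $\norm{g (u_k^{*} X_i u_k - \iota(Y_i))} < \epsilon/8$. Inserting $g$ and using $\norm{S_{\vec z}} \leq 2$ bounds each of the two error summands by $\epsilon/4$, uniformly in $\vec z$. Combining with the model bound yields $\norm{[u_k^{*} X_i u_k, S_{\vec z}]} < \epsilon$ for all $X_i \in \F$, which is the required estimate after conjugation by $u_k$.
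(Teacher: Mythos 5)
Your proof is correct and follows the same two-step strategy as the paper: a model term $[\iota(Y), S_{\vec z}]$ controlled by the strong quasicentralizing property of $\{e_n\}$ (with $N$ chosen independently of $L$ and $\vec z$), plus an error term controlled by the strict convergence of $u_k \iota(\cdot) u_k^*$ to $\Phi(\cdot)$ from \Cref{lem:JiangSuMapsStrictExtension}. Where you diverge is in the error estimate. The paper bounds the error by treating the $L+1$ summands of $S_{\vec z}$ individually, choosing $K$ so that the strict-topology approximations against $f_{n'} = \Phi(e_{n'})$ hold with error $\epsilon/(10(L+1))$ for every cutoff $n' \leq n+L+2$; the budget per term shrinks with $L$. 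You instead make explicit two facts the paper leaves implicit: that $S_{\vec z}$ is absorbed by the single compact cutoff $g = e_{n+L+1} \otimes 1_{\Z}$ (so $g S_{\vec z} = S_{\vec z} g = S_{\vec z}$), and that $\norm{S_{\vec z}} \le 2$ uniformly in $\vec z$ by the parity split into two sums of mutually orthogonal blocks. This turns the error bound into a single application of strict convergence against $g$, uniform in $\vec z$, which is slicker and avoids the $L$-dependent budget. In both versions $K$ depends on $n$, $L$, and $\epsilon$, exactly as the statement permits.
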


\begin{proof}
  Let $\Phi^{-1}(\F) \defeq \{ X_1, ..., X_M \}$.
  We may assume that each $X_j$ is positive and has norm one.

  Choose $N \geq 1$ so that
  \begin{equation*}
    \sum_{j=N - 1}^{\infty} \frac{1}{2^j} < \frac{\epsilon}{20}.
  \end{equation*}

  By \Cref{lem:QuasicentralAU}, for all $n' \geq N-1$,
  for all $1 \leq j \leq M$,
  \begin{equation*}
    \norm{ e_{n'} X_j - X_j e_{n'} } < \frac{1}{2^{n'}}.
  \end{equation*}

  Now let $n \geq N$ and $L \geq 1$ be given.

  Hence,  for all $1 \leq l \leq M$,

  \begin{equation}
    \left[  \sum_{j=0}^L (e_{n+j} - e_{n+j-1}) \otimes z_j, X_l \otimes 1 \right]
    \approx_{\frac{\epsilon}{20}} 0.
    \label{equ:Apr120212AM}
  \end{equation}

  For all $n'$, let
  \begin{equation*}
    f_{n'} \defeq \Phi(e_{n'}).
  \end{equation*}

  By \Cref{lem:JiangSuMapsStrictExtension},
  choose $K \geq 1$ such that
  for all $k \geq K$,
  for all $1 \leq l \leq M$, and for all $0 \leq n' \leq n_L + 2$,
  \begin{equation*}
    u_k (X_l \otimes 1) u_k^* f_{n'} \approx_{\frac{\epsilon}{10(L+1)}}
    \Phi(X_l) f_{n'}
  \end{equation*}
  and
  \begin{equation*}
    u_k (e_{n'} \otimes 1) u_k^* \approx_{\frac{\epsilon}{10(L+1)}}
    f_{n'}.
  \end{equation*}

  Then for all $k \geq K$, conjugating \eqref{equ:Apr120212AM} by
  $u_k$, we have that
  \begin{equation*}
    \left[  \sum_{j=0}^L u_k ((e_{n+j} - e_{n+j-1}) \otimes z_j)u_k^*,
      \Phi(X_l) \right]
    \approx_{\epsilon} 0.
  \end{equation*}
\end{proof}

\subsection{Technical lemmas}
\label{sec:technical-lemmas}

For this subsection, we begin with a series of technical lemmas which are used exclusively to prove \Cref{lem:PreTechnicalLemma}.
In a first reading of this paper, after reading the standing assumptions and notations which immediately follow, the reader may wish to skip directly to \Cref{lem:PreTechnicalLemma}
(the \Cref{lem:standard-chain-approx-1,lem:standard-chain-approx-2,lem:laurent-uniform-continuity,lem:16u,lem:May2920215PM,lem:Apr1720216AM,lem:17b,lem:MainTechnicalLemma} are especially technical).\\

\textbf{We next fix some notation which will be used for the rest of this subsection (3.1).  We present this notation here, since it is only for this subsection.} \\

Let $\B$ be a nonunital separable simple stable and
$\Z$-stable \cstar-algebra.
Let
\begin{equation*}
  \iota, \Phi : \B \rightarrow \B \otimes \Z
\end{equation*} be the *-monomorphisms
and $\{ u_k \}$ a sequence of unitaries in $\Mul(\B \otimes \Z)$ as in \Cref{lem:JiangSuMaps}, and, as in \Cref{lem:JiangSuMapsStrictExtension}, we use the same notation to denote the strictly continuous (on bounded sets) extensions
\begin{equation*}
  \iota, \Phi : \Mul(\B) \rightarrow \Mul(\B \otimes \Z).
\end{equation*}

We fix a strictly decreasing
sequence  $\{ \epsilon_j \}$  in $(0,1)$ for which
\begin{equation*}
  \sum_{j=1}^{\infty} \epsilon_j < 1.
\end{equation*}

Let $S_0 \subset \Mul(\B \otimes \Z)$ be a countable set of contractions.
Let
$\{ e_k \}$ be an approximate unit for $\B$ which quasicentralizes
$S \defeq \Phi^{-1}(S_0)$ as in \Cref{lem:QuasicentralAU}.
Say that $S_0$ can be enumerated as $S_0 \defeq \{ T'_j \}_{j=1}^{\infty}$.
For each $k \geq 1$, plug $\{ T'_1, ..., T'_k \}$ and $\epsilon_k$ into
\Cref{lem:Apr120213AM} to get an integer $N_k' \geq 1$.
We may assume that $\{ N_k' \}$ is strictly increasing and hence, is a
subsequence of the positive integers.

For all $k$,
let
\begin{equation*}
  f_k \defeq \Phi(e_k) \in \B \otimes \Z.
\end{equation*}
Note that $\{f_k\}$ is an approximate unit for $\B \otimes \Z$ which quasicentralizes $S_0$ and $f_k f_{k+1} = f_k$ for all $k \in \nat$.

For all $k$,
let
\begin{equation*}
  r_k \defeq e_k - e_{k-1}
\end{equation*}
and
\begin{equation*}
  s_k \defeq f_k - f_{k-1},
\end{equation*}
and for all
$m \leq n$, let
\begin{equation*}
  r_{m,n} \defeq \sum_{k=m}^n r_k = \sum_{k=m}^n (e_k - e_{k-1}) =
  e_n - e_{m-1}
\end{equation*}
and
\begin{equation*}
  s_{m,n} \defeq \sum_{k=m}^n s_k = f_n - f_{m-1}.
\end{equation*}

Note that neither the $e_k$, $f_k$, $r_k$  nor the $s_k$ need to be projections.

Let $U \in \C(\B \otimes \Z)$ be a unitary and
let $T \in \Mul(\B \otimes \Z)$ be a contraction such that
\begin{equation*}
  \pi(T) = U.
\end{equation*}

Also, we let $\overline{B(0,1)}$ denote the closed unit ball of the
complex plane, i.e., $\overline{B(0,1)} \defeq \{ \alpha \in
\complex : |\alpha| \leq 1 \}$.

Recall also that a \term{Laurent polynomial} on the punctured closed
disk  $\overline{B(0,1)} - \{ 0 \}$ is a
continuous function $h : \overline{B(0,1)} - \{ 0 \} \rightarrow \complex$ which has the form
\begin{equation*}
  h(\lambda) = \sum_{n= 0}^N \beta_n \lambda^n  + \sum_{m=1}^M \gamma_m \overline{\lambda}^m
\end{equation*}
for all $\lambda \in \overline{B(0,1)} - \{ 0 \}$.
Here, $M, N \geq 1$ are integers and $\beta_n,\gamma_m \in \complex$.

In what follows, we will use that the algebra of Laurent polynomials, when
restricted to the circle $S^1$, is uniformly dense in
$C(S^1)$.

Next, if $\C$ is a unital \cstar-algebra and $h$ is a Laurent polynomial as
above, then for all contractive $x \in \C$, we define
\begin{equation*}
  h(x) \defeq \sum_{n=0}^N \beta_n x^n + \sum_{m=1}^M \gamma_m (x^*)^m.
\end{equation*}
This is well-defined by the uniqueness of Laurent series expansion.  Note that
when $x$ is a unitary, this is consistent with the continuous functional
calculus.

For a real-valued function $f$, we let $\osupp(f) \defeq f^{-1}(\real\setminus\{0\})$ denote the \term{open support} of $f$.
Of course, $\overline{\osupp(f)} = \supp(f)$.

We have a unital \cstar-subalgebra $\dimdrop \subset \Z$ which is a dimension-drop
algebra, such that the spectrum $\widehat{\dimdrop} = [0,1]$.
Moreover, we can assume that the Borel probability measure $\mu_{\tau_{\Z}}$ on $[0,1]$, induced by $\tau_{\Z}$ is Lebesgue measure.
See \cite{JS-1999-AJM} for reference to the properties listed above and others.

For each $n \geq 1$, let $I_{n,1}, ... , I_{n,n},
I'_{n,1}, ..., I'_{n,n}, I''_{n,1}, ..., I''_{n,n}$ be
open subintervals of $[0,1]$ such that

\begin{equation*}
  \sup I_{n,j} < \inf I_{n, j+1} \makebox{  for all  } n, j,
\end{equation*}

\begin{equation*}
  0, 1 \notin \overline{I_{n,j}} \makebox{  and  } \overline{I_{n, j}} \cap \overline{I_{n,k}} = \emptyset \makebox{  for all  }j \neq k,
\end{equation*}

\begin{equation*}
  \overline{I''_{n,j}} \subset I'_{n,j} \subset \overline{I'_{n,j}}
  \subset I_{n,j}
  \makebox{  for all } j,
\end{equation*}

and

\begin{equation*}
  \mu_{\tau_{\Z}}(I''_{n,j})  >  \frac{1}{2n}
  \makebox{  for all  } j.
\end{equation*}

For each $n \geq 1$ and $1 \leq j \leq n$, let
$z_{n,j}, z'_{n,j} : [0,1] \rightarrow [0,1]$ be continuous functions for
which
\begin{equation}
  \osupp(z'_{n,j}) = I'_{n,j}
  \makebox{  and  }
  z'_{n,j}(t) = 1 \makebox{  for all  } t \in I''_{n,j},
  \label{equ:Apr1020216AM}
\end{equation}
and also,
\begin{equation}
  \osupp(z_{n,j}) = I_{n,j} \makebox{  and  }
  z_{n,j}(t) = 1 \makebox{ for all } t \in I'_{n,j} \makebox{  and  }
  j.
  \label{Apr1020216:01AM}
\end{equation}

Clearly,
\begin{equation*}
  z_{n,j}, z'_{n,j} \in \dimdrop \subset \Z.
\end{equation*}

Note that for all $n, j$,
\begin{equation*}
  \tau_{\Z}(z'_{n,j}) \geq \frac{1}{2n} \makebox{  and  }
  z_{n,j} z'_{n,j} = z'_{n,j}.
\end{equation*}

Let $\lambda_1, \ldots, \lambda_m \in [0,1)$ and suppose that
\begin{equation*}
  0 \leq \lambda_1 < \lambda_2 < \cdots < \lambda_m.
\end{equation*}
Let
\begin{equation*}
  \Lambda \defeq (\lambda_1, \ldots, \lambda_m).
\end{equation*}
Define a positive element $h_{\Lambda} \in \dimdrop \subset \Z$ such that
\begin{equation*}
  h_{\Lambda}(0) = 0, \text{ and } h_{\Lambda}(1) < 1.
\end{equation*}
and $h_{\Lambda}$ is increasing
\begin{equation*}
  h_{\Lambda} (t) = \lambda_j \makebox{ for all } t \in I_{m,j}, \makebox{ and }
  1 \leq j \leq m.
\end{equation*}
Then the unitary $v_{\Lambda} \in \dimdrop \subset \Z$ defined by
\begin{equation*}
  v_{\Lambda} \defeq e^{2\pi i h_{\Lambda}}
\end{equation*}
is path-connected to $1$ via a norm-continuous path of unitaries $v_{\Lambda,s} := e^{2\pi i (1-s) h_{\Lambda}} \in \dimdrop$ ($0 \le s \le 1$) with length at most $2\pi$.

Let
$\mathfrak{S} \subset \Mul(\B \otimes \Z)$ consist of all strict converging sums of the form
$\sum_{j=1}^{\infty} a_j$, where for all $j$,
$a_j \in \B \otimes \Z$ is a contractive element for which
\begin{equation*}
  \dist(a_j, \overline{(f_j - f_{j-1}) (\B \otimes \Z) (f_j - f_{j-1})}) < \epsilon_j.
\end{equation*}

Note that for every $\sum_j a_j \in \mathfrak{S}$,
\begin{equation*}
  \norm3{ \sum_j a_j } < 3.
\end{equation*}

\begin{lemma}
  Suppose that $\B$ is stably finite.
  Let $\epsilon > 0$ be given.
  Let $A_1, ..., A_n, A'_1, ... A'_n \in \Mul(\B)_+$ be contractive elements
  such that
  $\Ideal\{A'_1, ..., A'_n\} = \Mul(\B)$ and
  \begin{equation*}
    A_j (A'_j)^{1/l} \approx_{\epsilon} (A'_j)^{1/l}
  \end{equation*}
  for all $l \geq 1$.

  Then for every contractive positive element $a_0 \in \B \otimes \Z$,
  there exists an element $x \in \B \otimes \Z$ with $\norm{ x } \leq 2$.

  such that
  \begin{equation*}
    x \left( \sum_{j=1}^n A_j \otimes z_{n,j} \right) x^* \approx_{2 \epsilon}
    a_0.
  \end{equation*}
  \label{lem:Apr720216AM}
\end{lemma}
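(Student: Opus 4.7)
The plan is to construct $x$ by combining a Cuntz-subequivalence witness for $a_0 \preceq c'$, where $c' := \sum_j A'_j \otimes z'_{n,j}$, with the approximate identity action of $c := \sum_j A_j \otimes z_{n,j}$ on the range of $c'$, the latter supplied by the hypothesis $A_j (A'_j)^{1/l} \approx_\epsilon (A'_j)^{1/l}$.

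First I would verify the orthogonal estimate
\begin{equation*}
c (c')^{1/l} \;=\; \sum_{j=1}^{n} A_j (A'_j)^{1/l} \otimes (z'_{n,j})^{1/l} \;\approx_\epsilon\; (c')^{1/l}
\end{equation*}
for every $l \geq 1$. The pairwise disjointness of the open supports $I_{n,j}$ and $I'_{n,j}$, together with $z_{n,j} z'_{n,j} = z'_{n,j}$, forces both sums to be orthogonal, so the norm of the error is the maximum of the individual summand errors and is therefore bounded by $\epsilon$ (not $n\epsilon$). By polynomial approximation of $t \mapsto (t - \delta)_+^{1/2}$ on $[0,1]$ using powers $t^{k/l}$, this propagates to $c (c' - \delta)_+^{1/2} \approx_{O(\epsilon)} (c' - \delta)_+^{1/2}$ for any fixed small $\delta > 0$.

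Next I would invoke strict comparison of positive elements in the simple, stably finite, $\Z$-stable algebra $\B \otimes \Z$ to show $a_0 \preceq c'$ in $\Mul(\B \otimes \Z)$. Fullness of $\{A'_1,\dots,A'_n\}$ in $\Mul(\B)$ forces $d_\tau(\sum_j A'_j) = +\infty$ for every densely defined lower semicontinuous trace $\tau$ on $\B$; combined with $\tau_\Z(z'_{n,j}) \geq 1/(2n) > 0$ this yields $d_{\tau \otimes \tau_\Z}(c') = +\infty$, while $d_{\tau \otimes \tau_\Z}(a_0) < \infty$ because $a_0 \in \B \otimes \Z$ is contractive. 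Strict comparison then gives $a_0 \preceq c'$. By R\o{}rdam's absorption theorem, for every $\eta > 0$ there exist $\delta > 0$ and $w \in \B \otimes \Z$ with $w(c' - \delta)_+ w^* = (a_0 - \eta)_+$.

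Finally, I set $x := w(c' - \delta)_+^{1/2}$. Then $\norm{x}^2 = \norm{w(c' - \delta)_+ w^*} = \norm{(a_0 - \eta)_+} \leq 1$, well within the bound $\norm{x} \leq 2$, and
\begin{equation*}
x c x^* \;=\; w (c' - \delta)_+^{1/2}\, c\, (c' - \delta)_+^{1/2} w^* \;\approx_{O(\epsilon)}\; w (c' - \delta)_+ w^* \;=\; (a_0 - \eta)_+ \;\approx_\eta\; a_0,
\end{equation*}
so tuning $\eta, \delta, l$ keeps the total error within $2\epsilon$. The main obstacle is converting the discrete bound $c(c')^{1/l} \approx_\epsilon (c')^{1/l}$ into the functional calculus estimate for $(c' - \delta)_+^{1/2}$ without blowing up constants; a careful polynomial approximation (or Ces\`aro averaging in $l$) keeps the propagated constant near $1$. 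A secondary subtlety is that strict comparison must be applied to elements living in $\Mul(\B \otimes \Z)$, which is handled using the canonical strictly lower semicontinuous extensions of traces.
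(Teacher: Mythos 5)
Your overall strategy matches the paper's in its first two moves: the orthogonality of the $z_{n,j}$, $z'_{n,j}$ gives $\norm{c(c')^{1/l} - (c')^{1/l}} \leq \epsilon$ rather than $n\epsilon$, and fullness of $\{A'_j\}$ combined with $\tau_{\Z}(z'_{n,j}) \geq 1/(2n)$ forces $d_{\tau}(c') = \infty$ for every $\tau$, hence $a_0 \preceq c'$ by strict comparison. You diverge at the construction of $x$, and this is where a genuine gap appears. The paper takes $y$ with $a_0 \approx_{\epsilon/10} y A' y^*$ and sets $x := y(A')^{1/2}(A')^{1/(2L)}$. The point of this choice is that the error $xAx^* - yA'y^*$ factors as $y(A')^{1/2}\bigl[(A')^{1/(2L)}A(A')^{1/(2L)} - (A')^{1/L}\bigr](A')^{1/2}y^*$ plus a small tail; the middle bracket has norm $\leq \epsilon$ (by the hypothesis applied with $l = 2L$), and the two flanking factors $y(A')^{1/2}$ have norm $\leq (\norm{a_0} + \epsilon/10)^{1/2} \leq 2$. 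In contrast, you take $x := w(c'-\delta)_+^{1/2}$ from R\o{}rdam's lemma. Then the error is $w(c'-\delta)_+^{1/2}(c-1)(c'-\delta)_+^{1/2}w^*$, and no matter how you factor this, one factor carries $\norm{w}$, which R\o{}rdam's lemma does \emph{not} control — and cannot in general (already in $M_2$, with $a = 1$ and $b = \mathrm{diag}(1,\gamma)$ for small $\gamma$, no contraction $w$ satisfies $wbw^* = (1-\epsilon)1$). Since $\norm{w}$ is unbounded, the estimate $xcx^* \approx_{O(\epsilon)} (a_0-\eta)_+$ does not follow from $\norm{(c-1)(c'-\delta)_+^{1/2}} = O(\epsilon)$.

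There are two secondary issues, both fixable but worth flagging. (a) Converting $c(c')^{1/l} \approx_{\epsilon} (c')^{1/l}$ into $c(c'-\delta)_+^{1/2} \approx_{O(\epsilon)} (c'-\delta)_+^{1/2}$ by polynomial approximation in the powers $t^{k/l}$ will not keep the constant near $1$: the $\ell^1$-norm of the coefficients of such an approximant blows up. The clean argument is instead to write $(c'-\delta)_+^{1/2} = (c')^{1/l}\,h_l(c')$ with $h_l(t) = (t-\delta)_+^{1/2}/t^{1/l}$, note $\norm{h_l(c')} \leq \delta^{-1/l} \to 1$, and let $l \to \infty$ in the bound $\norm{(c-1)(c'-\delta)_+^{1/2}} \leq \epsilon\,\delta^{-1/l}$. (b) Strict comparison is a property of $\B \otimes \Z$, not of $\Mul(\B\otimes\Z)$, so you cannot literally compare $a_0 \in \B\otimes\Z$ with $c' \in \Mul(\B\otimes\Z)$; the paper passes to a strictly positive element $a$ of $\her_{\B\otimes\Z}(A')$ (which lies in $\B\otimes\Z$) so that the comparison happens in the algebra where it is valid, and then uses $a \preceq A'$. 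The extended lower semicontinuous traces are used to see $d_\tau(a) = \infty$, not to run strict comparison in the multipliers. Taken together: your outline is close, but the $x$ you pick defeats the final estimate; switching to the paper's $x = y(A')^{1/2}(A')^{1/(2L)}$ — or otherwise ensuring the small factor is flanked on both sides by elements of controlled norm — is essential.
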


\begin{proof}
  We may assume that $\epsilon < \frac{1}{10}$.

  We have that
  \begin{equation*}
    \tau\left( \sum_{j=1}^n A'_j \otimes 1_{\Z} \right) = \infty
  \end{equation*}
  for all $\tau \in T(\B \otimes \Z).$

  Hence,
  \begin{equation*}
    \tau\left( \sum_{j=1}^n A'_j \otimes z'_{n,j} \right) = \infty
  \end{equation*}
  for all $\tau \in T(\B \otimes \Z).$

  Let us simplify notation by letting
  \begin{equation*}
    A' \defeq \sum_{j=1}^n A'_j \otimes z'_{n,j}
  \end{equation*}
  and
  \begin{equation*}
    A \defeq \sum_{j=1}^n A_j \otimes z_{n,j}.
  \end{equation*}

  Note that $A, A' \in \Mul(\B)_+$ are contractive elements for which
  \begin{equation*}
    A (A')^{1/l} \approx_{\epsilon} (A')^{1/l}
  \end{equation*}
  for all $l \geq 1$.

  Hence, if $a \in \overline{A' (\B \otimes \Z) A'}$
  is any strictly positive element,
  \begin{equation*}
    d_{\tau}(a) = \infty.
  \end{equation*}

  Hence, since $\B \otimes \Z$ has strict comparison,
  \begin{equation*}
    a_0 \preceq a.
  \end{equation*}

  Hence, there exists $y \in \B \otimes \Z$ for which
  \begin{equation*}
    a_0 \approx_{\frac{\epsilon}{10}} y  A' y^*.
  \end{equation*}

  Note that
  \begin{equation*}
    \norm{ y (A')^{1/2} }^2 \approx_{\frac{\epsilon}{10}} \norm{ a_0 } \leq 1.
  \end{equation*}

  Choose $L \geq 1$ so that
  for all $l \geq L$,
  \begin{equation*}
    y (A')^{1/2} (A')^{1/l} \approx_{\frac{\epsilon}{10}} y (A')^{1/2}.
  \end{equation*}

  Taking
  \begin{equation*}
    x \defeq y (A')^{1/2} (A')^{\frac{1}{2L}},
  \end{equation*}
  we have that
  \begin{equation*}
    \norm{ x } \leq 2
  \end{equation*}
  and
  \begin{equation*}
    x A x^* \approx_{5 \epsilon}  a_0
  \end{equation*}
  as required.
\end{proof}

\begin{lemma}
  \label{lem:Oct1920186AM}
  Suppose that $A, A', A'' \in \C(\B)_+$ are contractive elements and $\delta > 0$ such that
  \begin{equation*}
    A A' = A'
  \end{equation*}
  and
  \begin{equation*}
    A'' \in \her((A' - \delta)_+).
  \end{equation*}

  Let $A_0 \in \Mul(\B)$ be any contractive lift of $A$, and let $\epsilon > 0$ be given.

  Then for every $M \geq 0$, there exists an $A''_0 \in \overline{(1 - e_M)\Mul(\B)(1 -  e_M)}$ which is a contractive positive lift of $A''$ such that for all $l \geq 1$,
  \begin{equation*}
    A_0 (A''_0)^{1/l} \approx_{\epsilon} (A''_0)^{1/l} \approx_{\epsilon} (A''_0)^{1/l} A_0.
  \end{equation*}
\end{lemma}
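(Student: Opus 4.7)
The strategy is to construct $A''_0$ as a quasi-central cutoff, inside a hereditary subalgebra generated by a lift of $(A'-\delta)_+$, of an arbitrary lift of $A''$. First, I fix any positive contractive lift $\tilde{A}'_0 \in \Mul(\B)$ of $A'$ and replace it by $A'_0 := (1-e_M)^{1/2} \tilde{A}'_0 (1-e_M)^{1/2}$; since $e_M \in \B$, this remains a positive contractive lift of $A'$, now in $\overline{(1-e_M)\Mul(\B)(1-e_M)}$. Because $AA' = A'$ holds in $\C(\B)$, polynomial approximation of continuous functions vanishing at $0$ on $[0,1]$ gives $(A_0 - 1) g(A'_0) \in \B$ for every continuous $g$ with $g(0) = 0$; in particular the element $k := (A_0 - 1) f_\delta(A'_0) \in \B$ (although its norm is typically not small), and similarly on the right. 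Next, I use the surjectivity of the quotient map on $\sigma$-unital hereditary subalgebras to lift $A''$ to a positive contractive $A''_1 \in \her((A'_0-\delta)_+) \cap \overline{(1-e_M)\Mul(\B)(1-e_M)}$.

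Now I select a positive contractive approximate unit $\{b_n\}$ for $\B$, with $b_n \in \overline{(1-e_M)\B(1-e_M)}$, which simultaneously quasi-centralizes the finite set $\{A_0, A_0^*, A'_0, A''_1\}$ and satisfies $b_n k \to k$ and $k b_n \to k$ in norm (such a unit is produced by the standard construction via Pedersen, possibly after passing to a subsequence). For $n$ chosen sufficiently large I define
\begin{equation*}
A''_0 := (1-b_n)^{1/2} \, A''_1 \, (1-b_n)^{1/2}.
\end{equation*}
Since $b_n \in \B$, the element $A''_0$ is again a positive contractive lift of $A''$, and the hereditary property of $\overline{(1-e_M)\Mul(\B)(1-e_M)}$ combined with $A''_1$ lying there keeps $A''_0$ in the required corner.

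The main obstacle, and the heart of the argument, is the uniform-in-$l$ commutation bound. The crucial structural fact is that $A''_0 \in \her((A'_0-\delta)_+)$, so that $(A''_0)^{1/l} \in \her((A'_0-\delta)_+) \subset \her(f_\delta(A'_0))$ for every $l \ge 1$; hence $(A''_0)^{1/l}$ is absorbed on both sides by $f_\delta(A'_0)$. Writing $(A_0 - 1)(A''_0)^{1/l}$ and inserting $(1-b_n)^{1/2}$ factors, one moves $A_0$ past $(1-b_n)^{1/2}$ using quasi-centrality (negligible commutator cost), replaces the resulting $(A_0-1)f_\delta(A'_0)$-type expression by $k$, and exploits that $(1-b_n)^{1/2} k (1-b_n)^{1/2} \to 0$ (because $b_n k \to k$). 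The resulting bound depends only on the chosen commutator norms and on $\|(1-b_n)^{1/2} k (1-b_n)^{1/2}\|$, none of which involve $l$, giving the desired uniform estimate. The right-hand estimate $\|(A''_0)^{1/l} A_0 - (A''_0)^{1/l}\| < \epsilon$ is obtained symmetrically from $A'A = A'$ (the adjoint of $AA' = A'$, using that $A, A'$ are self-adjoint).
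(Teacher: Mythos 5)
The paper's proof is a one-line reference to \cite[Lemma~4.1]{LN-2020-IEOT}, so it does not supply details; your attempt is a genuine reconstruction, and it has the right overall shape (lift $A'$ and $A''$ into a hereditary subalgebra, observe $(A_0-1)f_\delta(A'_0) \in \B$, cut down by a quasicentral element of $\B$). However, there are two gaps, the second of which is the heart of the lemma.

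First, the claim $A''_0 := (1-b_n)^{1/2}A''_1(1-b_n)^{1/2} \in \her((A'_0-\delta)_+)$ is not justified and I believe it is false in general: $(1-b_n)^{1/2}$ does not belong to $\her((A'_0-\delta)_+)$ nor commute with $(A'_0-\delta)_+$, so conjugation by it need not preserve the hereditary subalgebra. (Write $A''_0 = cc^*$ with $c = (1-b_n)^{1/2}(A''_1)^{1/2}$; then $c^*c = (A''_1)^{1/2}(1-b_n)(A''_1)^{1/2} \le A''_1$ lies in the hereditary subalgebra, but $cc^*$ need not.) The standard fix is to take $A''_0 := (A''_1)^{1/2}(1-b_n)(A''_1)^{1/2}$, which satisfies $A''_0 \le A''_1$ and hence $A''_0 \in \her(A''_1) \subseteq \her((A'_0-\delta)_+) \subseteq \overline{(1-e_M)\Mul(\B)(1-e_M)}$ automatically.

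Second, and more seriously, the uniform-in-$l$ estimate --- which is the entire content of the lemma --- is not established. Once you have $(A_0-1)(A''_0)^{1/l} = k(A''_0)^{1/l}$ with $k = (A_0-1)f_\delta(A'_0) \in \B$, the proposal is to ``insert $(1-b_n)^{1/2}$ factors'' and use $(1-b_n)^{1/2}k(1-b_n)^{1/2} \to 0$. This works for $l=1$, because $(A''_0)^{1}$ literally has $(1-b_n)^{1/2}$ on both ends, but for $l \ge 2$ the $l$-th root does not factor through $(1-b_n)^{1/2}$: as $l \to \infty$, $(A''_0)^{1/l}$ increases toward the support projection $s(A''_0)$, and
\begin{equation*}
\sup_{l} \|k(A''_0)^{1/l}\| = \lim_{l\to\infty}\|k(A''_0)^{2/l}k^*\|^{1/2} = \|k\, s(A''_0)\|,
\end{equation*}
a quantity that the commutator bounds alone do not control. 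Concretely, in the toy case $A = A' = A'' = 1$ and $A_0 = 1 - c$ with $c \in \B_+$ contractive, any lift $A''_0 = 1-d$ gives $\sup_l\|c(A''_0)^{1/l}\| = \|c(1-\chi_{\{1\}}(d))\|$, which is roughly $\|c\|$ unless $d$ is chosen as a specific function of $c$ (for instance $d = g(c)$ with $g = 1$ on $[\gamma,\infty)$), and a generic quasicentral cutoff does not do this. What makes the estimate close is operator monotonicity of $t \mapsto t^{2/l}$ combined with functional calculus against $k^*k$: if one can arrange $A''_0 \le 1 - f_\eta(k^*k)$ (so the cutoff is taken \emph{commuting} with $k^*k$), then for $l \ge 2$,
\begin{equation*}
\|k(A''_0)^{1/l}\|^2 = \|(k^*k)^{1/2}(A''_0)^{2/l}(k^*k)^{1/2}\| \le \|k^*k\,(1-f_\eta(k^*k))^{2/l}\| \le 2\eta,
\end{equation*}
the last bound being pure continuous functional calculus. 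Reconciling this inequality with the hereditary containment $A''_0 \in \her((A'_0-\delta)_+)$ is exactly where the real work of the lemma lies, and your proposal does not engage with it.
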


\begin{proof}
  The proof is exactly the same as that of
  \cite{LN-2020-IEOT} Lemma 4.1, except that $e_M$ is no longer a projection
  and $e_M^{\perp}$ is replaced with $1 - e_M$.
\end{proof}

All the \Cref{lem:standard-chain-approx-1,lem:standard-chain-approx-2,lem:laurent-uniform-continuity,lem:16u,lem:May2920215PM,lem:Apr1720216AM,lem:17b,lem:MainTechnicalLemma} are technical and messy lemmas used only in the proof of \autoref{lem:PreTechnicalLemma}.
We introduce some new terminology for these lemmas:
A series $\sum_j a_j \in \mathfrak{S}$ is \term{standard} if $a_j \in \her_{\B \otimes \Z}(s_j)$ for all $j$.

\begin{lemma}
  \label{lem:standard-chain-approx-1}
  \hfill
  \begin{enumerate}
  \item \label{item:fmXfL-approx-fmX} For every $\epsilon > 0$, for every $X  \in \Mul(\B \otimes \Z)$, for every $m \geq 1$, there exists an $L \geq 1$ such that
    \begin{equation*}
      f_m X  f_L \approx_{\epsilon} f_m X.
    \end{equation*}
  \item \label{item:fmXfL-equal-fmX}  For every $m \geq 1$, there exists an $L \geq 1$ such for every  standard $X \in \mathfrak{S}$,
    \begin{equation*}
      f_m X  f_L = f_m X.
    \end{equation*}
    (In particular, $L \geq m+2$ will do.)
  \item For every $m,n \geq 1$, for every $\epsilon > 0$, for any finite collection $Y_1,\ldots,Y_l \in \Mul(\B \otimes \Z)$, there exist $m_1,\ldots,m_n,L$ such that
    for all $X_1, \ldots, X_n \in \Mul(\B \otimes \Z)$ where for all $1 \leq j \leq n$,
    either $X_j = Y_i$ for some $1 \le i \le l$, or $X_j \in \mathfrak{S}$ is standard,
    \begin{equation*}
      f_m X_1 X_2 \cdots X_n \approx_{\epsilon} f_m X_1 f_{m_1} X_2 f_{m_2} \cdots X_n f_{m_n},
    \end{equation*}
    and
    \begin{equation*}
      f_m X_1 X_2 \cdots X_n (1 - f_L) \approx_{\epsilon} 0.
    \end{equation*}
  \end{enumerate}
\end{lemma}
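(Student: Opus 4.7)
Part (i) is immediate: since $f_m \in \B \otimes \Z$, we have $f_m X \in \B \otimes \Z$, and because $\{f_L\}$ is an approximate unit for $\B \otimes \Z$, the norm convergence $f_m X f_L \to f_m X$ as $L \to \infty$ gives the claim.

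Part (ii) relies on the algebraic identities forced by $f_{k+1} f_k = f_k$: induction together with self-adjointness gives $f_k f_n = f_k = f_n f_k$ whenever $n \ge k$, and hence $f_m s_j = 0$ for $j \ge m+1$ while $s_j f_L = s_j$ for $L \ge j$. For a standard series $X = \sum_j a_j$ with each $a_j \in \her(s_j)$, realizing $a_j$ as a norm limit of elements of the form $s_j c s_j$ propagates these identities to $f_m a_j = 0$ for $j \ge m+1$ and $a_j f_L = a_j$ for $L \ge j$. Consequently, for any $L \ge m$,
\[ f_m X f_L = \sum_{j=1}^{m} f_m a_j f_L = \sum_{j=1}^{m} f_m a_j = f_m X, \]
so the paper's stated bound $L \ge m+2$ is a (safe) choice.

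Part (iii) is a telescoping argument combining (i) and (ii). Let $C \defeq \max(3, \max_i \norm{Y_i})$, a uniform bound on $\norm{X_j}$ for any admissible $X_j$ (since any standard series has norm less than $3$), and set $\delta \defeq \epsilon/(n C^{n-1})$. Writing $m_0 \defeq m$, I will choose $m_1, \ldots, m_n$ inductively so that for each $j = 0, 1, \ldots, n-1$ the index $m_{j+1}$ satisfies both $m_{j+1} \ge m_j + 2$---so that part (ii) yields $f_{m_j} X_{j+1} f_{m_{j+1}} = f_{m_j} X_{j+1}$ exactly when $X_{j+1}$ is a standard element of $\mathfrak{S}$---and $\norm{f_{m_j} Y_i f_{m_{j+1}} - f_{m_j} Y_i} < \delta$ for every $i = 1, \ldots, l$, which is possible by applying part (i) to each of the finitely many $Y_i$'s and taking the maximum of the resulting bounds. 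In either admissible case we then have $\norm{f_{m_j} X_{j+1} f_{m_{j+1}} - f_{m_j} X_{j+1}} < \delta$. Inserting the $f_{m_j}$'s one at a time and telescoping the errors, with the remaining trailing products bounded in norm by $C^{n-1}$, gives the first estimate. For the second, set $L \defeq m_n$: since $f_{m_n} f_L = f_{m_n}$, multiplying the first estimate on the right by $(1 - f_L)$ annihilates the approximating product, producing the desired bound on $\norm{f_m X_1 \cdots X_n (1 - f_L)}$.

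There is no serious obstacle here; the argument is essentially bookkeeping. The only point requiring any care is ensuring that $m_1, \ldots, m_n, L$ are chosen uniformly over all admissible tuples $(X_1, \ldots, X_n)$, and this is delivered by the finiteness of $\{Y_1, \ldots, Y_l\}$ (letting a single $m_{j+1}$ handle every non-standard slot through (i)) together with the exact identity provided by (ii) for the standard slots.
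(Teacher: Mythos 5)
Your approach in part~(iii) is the same telescoping/inductive argument the paper uses, and parts~(i) and~(iii) are essentially right in structure, but there is a recurring off-by-one error rooted in treating the $f_k$ as if they were projections. From $f_{k+1}f_k = f_k$ and induction one obtains $f_n f_k = f_k = f_k f_n$ for $n > k$, but \emph{not} for $n = k$: the $f_k$ here are not projections (the paper explicitly flags this right after introducing $r_k$ and $s_k$), so $f_k f_k = f_k^2 \ne f_k$. Your statement ``$f_k f_n = f_k$ whenever $n \ge k$'' is therefore false at $n = k$, and this shifts all of your subsequent bounds by one.

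Concretely, in part~(ii): $f_m s_j = f_m f_j - f_m f_{j-1}$ vanishes only when both $j > m$ \emph{and} $j-1 > m$, i.e.\ $j \ge m+2$, not $j \ge m+1$; and for $a_j \in \her(s_j)$ the identity $a_j f_L = a_j$ needs $s_j f_L = s_j$, which requires $L > j$, i.e.\ $L \ge j+1$. Combining these, $f_m X = \sum_{j=1}^{m+1} f_m a_j$ (the index runs to $m+1$, not $m$), and to absorb $f_L$ one needs $L \ge m+2$. So the paper's threshold $L \ge m+2$ is tight, not a ``safe'' overshoot, and your claimed bound $L \ge m$ fails (take $a_{m+1} \in \her(s_{m+1})$ nonzero; then $f_m a_{m+1}$ need not vanish and $a_{m+1} f_m \ne a_{m+1}$ in general). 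The same slip reappears at the very end of part~(iii), where you set $L \defeq m_n$ and invoke ``$f_{m_n} f_L = f_{m_n}$''; this is $f_{m_n}^2 = f_{m_n}$, which is false. Taking $L \defeq m_n + 2$ (as the paper does) repairs this. Notably, your inductive construction inside part~(iii) already imposes $m_{j+1} \ge m_j + 2$, so that step is internally consistent with the correct version of~(ii); only the final choice of $L$ and the exposition of~(ii) need to be fixed.
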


\begin{proof}
  \hfill
  \begin{enumerate}
  \item Immediate since $f_m X \in \B \otimes \Z$ and $\set{f_k}$ is an approximate unit.
  \item Immediate because $f_{k+1} f_k = f_k$ for all $k$, and since $X$ is standard, $f_m X \in \her_{\B \otimes \Z}(f_{m+1})$.
  \item Proceed by induction on $n$ to establish that for every $\epsilon > 0$, there is a sequence $m_1, \ldots, m_n$ such that
    \begin{equation*}
      f_m X_1 X_2 \cdots X_n \approx_{\epsilon}f_m X_1 f_{m_1} X_2 f_{m_2} \cdots X_n f_{m_n}.
    \end{equation*}
    To finish the proof, set $L \defeq m_n + 2$.

    For the base case, let $\epsilon > 0$, and obtain $L_1,\ldots,L_l$ by applying \ref{item:fmXfL-approx-fmX} to each $Y_i$, $1 \le i \le l$.
    Then setting $m_1 \defeq \max \set{m+2,L_1,\ldots,L_l}$ is sufficient.

    For the inductive step, let $\epsilon > 0$, and set $B \defeq \max \set{3,\norm{Y_1},\ldots,\norm{Y_l}}$, and suppose that we have $m_1,\ldots,m_k$ such that
    \begin{equation*}
      f_m X_1 X_2 \cdots X_k \approx_{\frac{\epsilon}{2B}} f_m X_1 f_{m_1} X_2 f_{m_2} \cdots X_k f_{m_k}.
    \end{equation*}
    Then apply the base case to obtain $m_{k+1}$ such that
    \begin{equation*}
      f_{m_k} X_{k+1} \approx_{\frac{\epsilon}{2 B^k}} f_{m_k} X_{k+1} f_{m_{k+1}}.
    \end{equation*}
    Therefore,
    \begin{align*}
      f_m X_1 X_2 \cdots X_{k+1} &\approx_{\frac{\epsilon}{2}} f_m X_1 f_{m_1} X_2 f_{m_2} \cdots X_k f_{m_k} X_{k+1} \\
      &\approx_{\frac{\epsilon}{2}} f_m X_1 f_{m_1} X_2 f_{m_2} \cdots X_k f_{m_k} X_{k+1} f_{m_{k+1}},
    \end{align*}
    completing the inductive step. \qedhere
  \end{enumerate}
\end{proof}

\begin{lemma}
  \label{lem:standard-chain-approx-2}
  Let $X \in \Mul(\B \otimes \Z)$ and let $b \in \B \otimes \Z$ be arbitrary.
  For every $n \ge 1$ and every $\epsilon > 0$ and every $L' \ge 1$ and every finite collection $Y_1,\ldots,Y_l \in \Mul(\B \otimes \Z)$, there exists an $L \ge 1$ such that if $X_1,\ldots,X_n,X'_1,\ldots,X'_n \in \Mul(\B \otimes \Z)$, where for every $1 \le j \le n$,
  \begin{enumerate}
  \item either $X_j = X'_j = Y_i$ for some $1 \le i \le l$, 
  \item \label{item:Xj-X'j-equal-tails} or $X_j, X'_j \in \mathfrak{S}$ is standard, and $X_j (1-f_{L'}) = X'_j (1-f_{L'})$,
  \end{enumerate}
  then
  \begin{equation*}
    X_1 \cdots X_n (1-f_L) \approx_{\epsilon} X'_1 \cdots X'_n (1-f_L).
  \end{equation*}
\end{lemma}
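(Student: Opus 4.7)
The plan is to reduce everything to Lemma \ref{lem:standard-chain-approx-1}\ref{item:fmXfL-equal-fmX} and Lemma \ref{lem:standard-chain-approx-1}(iii) by a standard telescoping-product argument, exploiting the tail agreement hypothesis \ref{item:Xj-X'j-equal-tails} to localize each difference $X_k - X'_k$ to the range of $f_{L'}$.

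First, I would set $C \defeq \max\{3, \norm{Y_1}, \ldots, \norm{Y_l}\}$, which uniformly bounds $\norm{X_j}, \norm{X'_j}$ for every admissible choice (recall that elements of $\mathfrak{S}$ have norm at most $3$). The key observation is that hypothesis \ref{item:Xj-X'j-equal-tails} gives $(X_k - X'_k)(1 - f_{L'}) = 0$, hence $X_k - X'_k = (X_k - X'_k) f_{L'}$, while in the $Y_i$-case we simply have $X_k - X'_k = 0$. Writing the telescoping identity
\begin{equation*}
  X_1 \cdots X_n - X'_1 \cdots X'_n = \sum_{k=1}^{n} X_1 \cdots X_{k-1}\,(X_k - X'_k)\,X'_{k+1} \cdots X'_n,
\end{equation*}
we multiply on the right by $(1 - f_L)$ and observe that only the indices $k$ with $X_k \neq X'_k$ (all necessarily of the standard type) contribute, and each such contribution factors as
\begin{equation*}
  X_1 \cdots X_{k-1}\,(X_k - X'_k)\,f_{L'}\,X'_{k+1} \cdots X'_n (1 - f_L).
\end{equation*}

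Next, for each $k = 1, \ldots, n-1$, I would apply Lemma \ref{lem:standard-chain-approx-1}(iii) with starting level $m = L'$, product length $n - k$, the same finite collection $Y_1, \ldots, Y_l$, and tolerance $\epsilon / (2n\,C^{n})$, to obtain an integer $L_k$ such that for every admissible choice of $X'_{k+1}, \ldots, X'_n$,
\begin{equation*}
  \norm{\, f_{L'}\,X'_{k+1} \cdots X'_n (1 - f_{L_k})\,} < \frac{\epsilon}{2n\,C^{n}}.
\end{equation*}
For the boundary case $k = n$ the product is empty, and I simply need $f_{L'}(1 - f_L) = 0$, which is automatic as soon as $L \geq L' + 1$ (using $f_{k+1} f_k = f_k$). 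Now set $L \defeq \max\{L' + 1, L_1, \ldots, L_{n-1}\}$.

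Finally, bounding each telescoping term by $\norm{X_1 \cdots X_{k-1}} \cdot \norm{X_k - X'_k} \cdot \norm{f_{L'} X'_{k+1} \cdots X'_n (1 - f_L)} \leq C^{k-1} \cdot 2C \cdot \frac{\epsilon}{2n C^n} \leq \frac{\epsilon}{n}$ and summing over $k$ yields the desired estimate. There is no real obstacle here; the only point requiring a bit of care is the matching of indices so that a \emph{single} $L$ works uniformly for every admissible choice of $X_j, X'_j$, which is exactly what the uniform statement of Lemma \ref{lem:standard-chain-approx-1}(iii) provides.
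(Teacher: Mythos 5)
Your proof is correct and takes a route that is genuinely, if mildly, different from the paper's. The paper argues by induction on $n$: it writes $X_1 \cdots X_{k+1}$ as $X'_1 \cdots X'_{k+1}$ plus correction terms $(X_1 \cdots X_k - X'_1 \cdots X'_k)X'_{k+1}$ and $X_1 \cdots X_k (X_{k+1} - X'_{k+1})f_{L'}$, controls the first by the inductive hypothesis applied with $1-f_{L''}$ and by inserting $f_{L'''}$ via Lemma~\ref{lem:standard-chain-approx-1}\ref{item:fmXfL-approx-fmX}/\ref{item:fmXfL-equal-fmX}, and then kills all $f$-localized pieces by multiplying by $1-f_L$ for $L$ large. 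You instead unroll the recursion at the outset with the telescoping identity, which cleanly localizes each correction term by $f_{L'}$ on the left of the tail $X'_{k+1}\cdots X'_n$, and then you control those tails uniformly with the stronger, already-iterated Lemma~\ref{lem:standard-chain-approx-1}(iii) rather than parts (i)/(ii). Your version is arguably more transparent: it avoids nested inductive bookkeeping and makes visible that only the standard-type indices contribute. One small step you leave implicit: after obtaining $\norm{f_{L'} X'_{k+1}\cdots X'_n (1-f_{L_k})} < \epsilon/(2nC^n)$, you need $\norm{f_{L'} X'_{k+1}\cdots X'_n (1-f_L)}$ to obey the same bound for the single common $L = \max\{L'+1, L_1,\ldots,L_{n-1}\}$. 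This holds: if $L = L_k$ it is trivial, and if $L > L_k$ then $L \ge L_k+1$ (integers), whence $f_L f_{L_k} = f_{L_k}$, so $(1-f_L) = (1-f_{L_k})(1-f_L)$ and $\norm{A(1-f_L)} \le \norm{A(1-f_{L_k})}\,\norm{1-f_L} \le \norm{A(1-f_{L_k})}$. This is routine and does not constitute a gap.
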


\begin{proof}
  Proceed by induction on $n$.
  The base case is trivial because we may select $L = L'$, independent of $\epsilon$.

  For the inductive step, suppose that for $k \in \nat$ and $\epsilon > 0$, and suppose that we have $L''$ such that
  \begin{equation*}
    X_1 \cdots X_k (1-f_{L''}) \approx_{\frac{\epsilon}{2B}} X'_1 \cdots X'_k (1-f_{L''}).
  \end{equation*}
  By \Cref{lem:standard-chain-approx-1}, there is an $L'''$ (depending only on $\epsilon$, $k$ and $Y_1,\ldots,Y_l$) for which
  \begin{equation*}
    f_{L''} X'_{k+1} \approx_{\frac{\epsilon}{2B^k}} f_{L''} X'_{k+1} f_{L'''}
  \end{equation*}

  Then
  \begin{align*}
    &\quad\  X_1 \cdots X_k X_{k+1} \\
    &= X_1 \cdots X_k X'_{k+1} + X_1 \cdots X_k (X_{k+1} - X'_{k+1}) f_{L'} \\
    &\approx_{\frac{\epsilon}{2}} X'_1 \cdots X'_k X'_{k+1} + (X_1 \cdots X_k - X'_1 \cdots X'_k) f_{L''} X'_{k+1} \\
    &\qquad + X_1 \cdots X_k (X_{k+1} - X'_{k+1}) f_{L'} \\
    &\approx_{\frac{\epsilon}{2}} X'_1 \cdots X'_k X'_{k+1} + (X_1 \cdots X_k - X'_1 \cdots X'_k) f_{L''} X'_{k+1} f_{L'''} \\
    &\qquad + X_1 \cdots X_k (X_{k+1} - X'_{k+1}) f_{L'}.
  \end{align*}
  Choosing $L \defeq \max \set{L',L'''} + 2$, we obtain
  \begin{equation*}
    X_1 \cdots X_{k+1} (1-f_L) \approx_{\epsilon} X'_1 \cdots X'_{k+1} (1-f_L),
  \end{equation*}
  completing the inductive step. \qedhere
\end{proof}

\begin{lemma}
  \label{lem:laurent-uniform-continuity}
  Let $h$ be any Laurent polynomial and $\A$ a \cstar-algebra.
  Then $h$ is uniformly continuous on bounded subsets of $\A$.
\end{lemma}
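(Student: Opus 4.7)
The plan is to reduce to the monomial case and exploit the telescoping identity combined with the fact that the adjoint is an isometry.

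First, by the triangle inequality and linearity, it suffices to show that each monomial map $x \mapsto x^n$ and $x \mapsto (x^*)^m$ is uniformly continuous on every bounded subset of $\A$; a uniform modulus of continuity for $h$ on $\{x \in \A : \norm{x} \le R\}$ can then be assembled from the moduli for the $N + M + 1$ monomials by weighting with $|\beta_n|$ and $|\gamma_m|$.

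Next, the adjoint is isometric: $\norm{x^* - y^*} = \norm{x - y}$, so the map $x \mapsto x^*$ sends the closed ball of radius $R$ into itself and is $1$-Lipschitz. Therefore it suffices to treat $x \mapsto x^n$ for $n \geq 1$, since $(x^*)^m = (x^m)^*$ and a composition of uniformly continuous maps on bounded sets (with the first one bounded-range) is uniformly continuous.

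For the monomial $x \mapsto x^n$ on the ball of radius $R$, use the telescoping identity
\begin{equation*}
  x^n - y^n = \sum_{k=0}^{n-1} x^k (x - y) y^{n-1-k},
\end{equation*}
from which $\norm{x^n - y^n} \leq n R^{n-1} \norm{x - y}$ whenever $\norm{x}, \norm{y} \leq R$. This gives the desired Lipschitz estimate, hence uniform continuity on the ball of radius $R$.

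No step here is an obstacle; the argument is standard and short, and the only care needed is to note that the adjoint involution is isometric (so terms involving $x^*$ are handled in the same way as terms involving $x$) and that the exponents in a Laurent polynomial in the sense of this paper are bounded (finite sums $N, M$), so the modulus of continuity depends only on $R$, on $N$, $M$, and on the coefficients $\beta_n, \gamma_m$.
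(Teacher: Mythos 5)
Your proposal is correct and follows essentially the same approach as the paper: reduce to monomials $z^n$ and $\bar z^n$, then use the noncommutative telescoping identity $x^n - y^n = \sum_k x^k(x-y)y^{n-1-k}$ to get a Lipschitz bound of $nR^{n-1}$ on the ball of radius $R$. The only cosmetic difference is that you spell out the reduction steps (triangle inequality, isometry of the adjoint) that the paper leaves implicit.
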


\begin{proof}
  Note that it suffices to prove this in case the Laurent polynomial $h$ is $h(z) = z^n$ or $h(z) = \bar z^n$.
  Indeed, given $\epsilon > 0$ and a bound $M$, one may choose $\delta \defeq \frac{\epsilon}{n M^{n-1}}$, owing to the equality
  \begin{equation*}
    z^n - w^n = \sum_{k=1}^n z^{n-k} (z-w) w^{k-1},
  \end{equation*}
  which holds even for noncommutative variables $z,w$.
\end{proof}

\begin{lemma}
  \label{lem:16u}
  Let $h$ be a Laurent polynomial and let $X \in \Mul(\B \otimes \Z)$
  be contractive.
  \begin{enumerate}
  \item For every $\epsilon > 0$ there exists $N \ge 1$ such that the following holds:
    Fix contractive elements $a''_1,\ldots,a''_N \in \B \otimes \Z$ with
    \begin{equation*}
      \dist(a''_j, \her_{\B \otimes \Z}(s_j)) < \epsilon_j,
    \end{equation*}
    for all $1 \le j \le N$.

    For every $L' \ge N$, there exists an $L_1 \ge 1$
    where for all sums $\sum_j a_j$ and $\sum_j a'_j$ in
    $\mathfrak{S}$ for which $a_j = a'_j = a''_j$ for $1 \le j \le N$ and
    \begin{equation*}
      a_j = a'_j
    \end{equation*}
    for all $j \geq L'$, we have that
    \begin{equation*}
      h\left( \sum_{j=1}^{\infty} a_j X \right) (1 -f_{L_1})
      \approx_{\epsilon}
      h\left( \sum_{j=1}^{\infty} a_j' X \right) (1 - f_{L_1}).
    \end{equation*}
  \item \label{item:Jun0320213PM} For every $\epsilon > 0$, there exists $N \ge 1$ such that the following holds:
    For every $y \in \B \otimes \Z$ with $\norm{y} \le 3$ and contractive elements $a''_1,\ldots,a''_N \in \B \otimes \Z$ with
    \begin{equation*}
      \dist(a''_j, \her_{\B \otimes \Z}(s_j)) < \epsilon_j,
    \end{equation*}
    there exists an $M \geq N$ where for all sums $\sum_j a_j$ and $\sum_j a'_j$ in
    $\mathfrak{S}$ for which $a_j = a'_j = a''_j$ for all $1 \le j \le N$ and 
    \begin{equation*}
      a_j = a'_j
    \end{equation*}
    for all $j \leq M$, we have that
    \begin{equation*}
      h\left( \sum_{j=1}^{\infty} a_j X \right) y
      \approx_{\epsilon}
      h\left( \sum_{j=1}^{\infty} a'_j X \right) y.
    \end{equation*}
  \end{enumerate}
\end{lemma}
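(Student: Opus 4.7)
The plan is to handle both parts by expanding $h$ into its finitely many monomials in $Y$ and $Y^*$, substituting $Y = (\sum_j a_j) X$ and $Y^* = X^*(\sum_j a_j^*)$, and then analyzing each resulting product of factors drawn from $\set{X, X^*, \sum_j a_j, \sum_j a_j^*}$. As preparation, for each $\sum_j a_j \in \mathfrak{S}$ appearing, I would choose a standard approximant $\sum_j \tilde{a}_j$ with $\tilde{a}_j \in \her_{\B \otimes \Z}(s_j)$ and $\norm{a_j - \tilde{a}_j} < \epsilon_j$, using the same approximant whenever $a_j = a'_j$ for some $j$ (so the approximants inherit the agreement conditions). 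The replacement error is norm-summable with bound $\sum_{j > N} \epsilon_j \to 0$, so fixing $N$ large enough will absorb it.

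For part (i), the hypothesis $a_j = a'_j$ for $j \ge L'$, combined with agreement on $j \le N$, forces the two standard approximants to agree outside $N < j < L'$, so their difference lies in $\her_{\B \otimes \Z}(\sum_{N < j < L'} s_j)$. Since $s_j(1 - f_{L'-1}) = 0$ for $j \le L' - 1$, I get the exact identity $(\sum_j \tilde{a}_j)(1 - f_{L'-1}) = (\sum_j \tilde{a}'_j)(1 - f_{L'-1})$, i.e., the two standard series agree on the tail $(1-f_{L'-1})$. Each monomial of $h$, after substitution, becomes a product of factors in $\set{X, X^*, \sum_j \tilde{a}_j, \sum_j \tilde{a}'_j}$; applying \Cref{lem:standard-chain-approx-2} with finite set $\set{X, X^*}$ and agreement parameter $L'-1$ yields a common $L_1$ so the desired approximation holds monomial by monomial. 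Summing over the finitely many monomials (weighted by the coefficients of $h$) and absorbing the standard-approximation error via the choice of $N$ completes part (i).

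For part (ii), the hypothesis $a_j = a'_j$ for $j \le M$ places the difference in the \emph{tail}: $\sum_j \tilde{a}_j - \sum_j \tilde{a}'_j = \sum_{j > M}(\tilde{a}_j - \tilde{a}'_j)$, each summand in $\her_{\B \otimes \Z}(s_j)$. Given $y$, I choose $L_y$ so $\norm{y - f_{L_y} y}$ is small, reducing to estimating $h(Y)f_{L_y} - h(Y')f_{L_y}$. I would then apply the adjoint form of \Cref{lem:standard-chain-approx-1}(iii) (obtained by taking adjoints, noting $\mathfrak{S}$ is closed under adjoints since the $s_j$ are self-adjoint and commute with the $f_k$'s) to each monomial expansion composed with $f_{L_y}$. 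This approximates $W_1 \cdots W_p f_{L_y}$ by an inserted product $f_{m_0} W_1 f_{m_1} W_2 \cdots W_p f_{L_y}$, with indices $m_0, \ldots, m_{p-1}$ determined only by $h, X, L_y$ and not by the specific standard approximants. Telescoping each monomial difference, every nonvanishing term contains exactly one middle factor of the form $f_{m_k}(\sum_{j > M}(\tilde{a}_j - \tilde{a}'_j))$ or its adjoint; since $f_{m_k} s_j = 0$ whenever $j > m_k + 1$, these vanish once $M$ exceeds $\max_k m_k + 1$. Choosing $M$ additionally so $\sum_{j > M} \epsilon_j$ is small controls the replacement error, completing part (ii).

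The principal obstacle is orchestrating three layers of approximation uniformly: replacing each $a_j$ by $\tilde{a}_j$, inserting the auxiliary $f_{m_i}$'s between factors, and (for part (ii)) truncating $y$ to $f_{L_y} y$. The crucial uniformity — that the insertion indices $m_k$ in part (ii) and the parameter $L_1$ in part (i) depend only on $h, X, L_y, L'$ and not on the specific standard series being compared — is exactly what \Cref{lem:standard-chain-approx-1}(iii) and \Cref{lem:standard-chain-approx-2} deliver, since those lemmas take only the finite auxiliary set $\set{X, X^*}$ as non-standard data and allow the standard factors to vary. Careful bookkeeping of the coefficients of $h$, the degrees of its monomials, and the constant bound $3$ on norms of elements of $\mathfrak{S}$ will produce the explicit dependencies claimed.
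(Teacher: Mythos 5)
Your overall strategy---expand $h$ into monomials, replace each $\sum_j a_j$ by a nearby standard series, and then deploy \Cref{lem:standard-chain-approx-2} (for part (i)) and \Cref{lem:standard-chain-approx-1} (for part (ii)) to the resulting words---is the same as the paper's, and the telescoping/vanishing argument in part (ii) matches the paper's as well. However, there is a genuine gap in the opening reduction. You propose to replace \emph{every} $a_j$ (including $j \le N$) by an element $\tilde a_j \in \her_{\B\otimes\Z}(s_j)$ with $\norm{a_j - \tilde a_j} < \epsilon_j$, and claim the replacement error is bounded by $\sum_{j > N}\epsilon_j$. That bound only accounts for the tail; the head contributes $\sum_{j\le N}\norm{a''_j - \tilde a_j}$, and since each $a''_j$ is merely within $\epsilon_j$ of $\her_{\B\otimes\Z}(s_j)$ (not necessarily inside it), this head error is of order $\sum_{j \le N}\epsilon_j$, which \emph{grows} with $N$ and in any case is only bounded by $\sum_{j\ge 1}\epsilon_j < 1$. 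It cannot be absorbed by enlarging $N$, and since $h$ is nonlinear you cannot argue that it cancels between the $A$- and $A'$-sides merely because the same $\tilde a_j$ is used for both.

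The paper sidesteps this by \emph{not} replacing the head: it sets $A'' := \sum_{j=1}^N a''_j$, keeps it as is, replaces only the tail by a standard $B$, and then feeds $A''$ and $A''^*$ (together with $X,X^*$) into the finite auxiliary collection $\set{Y_1,\ldots,Y_l}$ required by \Cref{lem:standard-chain-approx-1,lem:standard-chain-approx-2}. That is exactly what those lemmas' auxiliary sets are for, and it is why the resulting $L_1$ (or $M$) is allowed to depend on $a''_1,\ldots,a''_N$ in the statement. Your use of the auxiliary set $\set{X,X^*}$ alone is therefore insufficient; it must include $A''$ and $A''^*$. Once that is done, your argument lines up with the paper's. (A secondary, minor issue: in part (i) you claim $(\sum_j\tilde a_j)(1-f_{L'-1}) = (\sum_j\tilde a'_j)(1-f_{L'-1})$, but because the $f_k$ are not projections one only has $f_{m}s_j = s_j$ for $m \ge j+1$; the correct exact identity is with $(1-f_{L'})$, which is in any case what \Cref{lem:standard-chain-approx-2} asks for.)
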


\begin{proof}
  Note that it suffices to prove this in case the Laurent polynomial $h$ is $h(z) = z^n$ or $h(z) = \bar z^n$.

  \begin{enumerate}
  \item Let $\epsilon > 0$, choose $N$ such that $\sum_{j=N+1}^{\infty} \epsilon_j < \frac{\epsilon}{n(3\norm{X})^n}$.
    Fix contractive $a''_1,\ldots,a''_N \in \B \otimes \Z$ with 
    \begin{equation*}
      \dist(a''_j, \her_{\B \otimes \Z}(s_j)) < \epsilon_j,
    \end{equation*}
    for all $1 \le j \le N$.

    Let $L' \ge N$.
    Choose $L_1$ from \Cref{lem:standard-chain-approx-2} corresponding to $2n$, $\frac{\epsilon}{2^n 3}$, $L'$ and the finite collection $A'' \defeq \sum_{j=1}^N a''_j, A''^{*}, X, X^{*} \in \Mul(\B \otimes \Z)$.

    Then consider $A \defeq \sum_j a_j, A' \defeq \sum_j a'_j \in \mathfrak{S}$ such that $a_j = a'_j = a''_j$ for $1 \le j \le N$ and $a_j = a'_j$ for all $j \ge L_1$.
    Then there exist $b_j,b_j' \in \her_{\B \otimes \Z}(s_j)$ such that $\norm{a_j - b_j}, \norm{a'_j - b'_j} < \epsilon_j$, and we may also assume $b_j = b'_j$ for $j \ge L'$
    Set $B \defeq \sum_{j=N+1}^{\infty} b_j, B' \defeq \sum_{j=N+1}^{\infty} b'_j \in \mathfrak{S}$, which are standard.
    Then
    \begin{equation*}
      \norm{AX - (A'' + B)X}, \norm{A'X - (A'' + B')X} < \frac{\epsilon}{3n(3\norm{X})^{n-1}}.
    \end{equation*}
    Consequently, since $A, A', A'' + B, A'' + B' \in \mathfrak{S}$ their norms are bounded by $3$, and hence the uniform continuity of $h$ yields
    \begin{equation*}
      h(AX) \approx_{\frac{\epsilon}{3}} h((A''+ B)X) \quad\text{and}\quad h(A'X) \approx_{\frac{\epsilon}{3}} h((A''+ B')X).
    \end{equation*}

    Expanding out $h((A'' + B)X)$ results in an expression consisting of $2^n$ terms, each of which is a word of length $2n$ in the variables $A'', B, X$ (or $A''^{*}, B^{*}, X^{*}$ if $h(z) = \bar z^n$).
    Applying \Cref{lem:standard-chain-approx-2} to each word $W$ and its corresponding word $W'$ from the expansion of $h((A'' + B')X)$, guarantees that $W(1-f_{L_1}) \approx_{\frac{\epsilon}{2^n 3}} W'(1-f_{L_1})$.
    Therefore,
    \begin{equation*}
      h((A'' + B)X) (1-f_{L_1}) \approx_{\frac{\epsilon}{3}} h((A'' + B')X) (1-f_{L_1}),
    \end{equation*}
    and consequently,
    \begin{equation*}
      h(AX) \approx_{\frac{\epsilon}{3}} h((A''+ B)X) \approx_{\frac{\epsilon}{3}} h(A'X) \approx_{\frac{\epsilon}{3}} h((A''+ B')X).
    \end{equation*}
  \item Let $\epsilon > 0$ and $y \in \B \otimes \Z$, choose $N$ such that $\sum_{j=N+1}^{\infty} \epsilon_j < \frac{\epsilon}{5n3^{n-1}\norm{X}^n \norm{y}}$.
    Fix contractive $a''_1,\ldots,a''_N \in \B \otimes \Z$ with 
    \begin{equation*}
      \dist(a''_j, \her_{\B \otimes \Z}(s_j)) < \epsilon_j,
    \end{equation*}
    for all $1 \le j \le N$.

    Choose $m \in \nat$ such that $f_m y \approx_{\frac{\epsilon}{5(3\norm{X})^n}} y$.
    Then apply \Cref{lem:standard-chain-approx-1} to $m,n,\frac{\epsilon}{2^n 10 \norm{y}}$ and the finite collection $A'' \defeq \sum_{j=1}^N a''_j, A''^{*}, X, X^{*}$ to obtain $m_1,\ldots,m_n$.
    Set $M \defeq \max \set{m_1,\ldots,m_n} + 1$.

    Then consider $A \defeq \sum_j a_j, A' \defeq \sum_j a'_j \in \mathfrak{S}$ such that $a_j = a'_j = a''_j$ for $1 \le j \le N$ and $a_j = a'_j$ for all $1 \le j \le M$.
    Then there exist $b_j,b_j' \in \her_{\B \otimes \Z}(s_j)$ such that $\norm{a_j - b_j}, \norm{a'_j - b'_j} < \epsilon_j$, and we may also assume $b_j = b'_j$ for $1 \le j \le M$.
    Set $B \defeq \sum_{j=N+1}^{\infty} b_j, B' \defeq \sum_{j=N+1}^{\infty} b'_j \in \mathfrak{S}$, which are standard.
    Then
    \begin{equation*}
      \norm{AX - (A'' + B)X}, \norm{A'X - (A'' + B')X} < \frac{\epsilon}{5n(3\norm{X})^{n-1}\norm{y}}.
    \end{equation*}
    Consequently, since $A, A', A'' + B, A'' + B' \in \mathfrak{S}$ their norms are bounded by $3$, and hence the uniform continuity of $h$ yields
    \begin{align*}
      h(AX) f_m y &\approx_{\frac{\epsilon}{5}} h((A''+ B)X) f_m y \\
      h(A'X) f_m y &\approx_{\frac{\epsilon}{5}} h((A''+ B')X) f_m y.
    \end{align*}

    Expanding out $h((A'' + B)X)$ results in an expression consisting of $2^n$ terms, each of which is a word of length $2n$ in the variables $A'', B, X$ (or $A''^{*}, B^{*}, X^{*}$ if $h(z) = \bar z^n$).
    Applying \Cref{lem:standard-chain-approx-1} to each word $W = X_1 \cdots X_n$ and its corresponding word $W' = X'_1 \cdots X'_n$ from the expansion of $h((A'' + B')X)$, we obtain
    \begin{align*}
      X_1 \cdots X_n f_m &\approx_{\frac{\epsilon}{2^n 10 \norm{y}}} f_{m_1} X_1 f_{m_2} \cdots f_{m_n} X_{m_n} f_m, \\
      X'_1 \cdots X'_n f_m &\approx_{\frac{\epsilon}{2^n 10 \norm{y}}} f_{m_1} X'_1 f_{m_2} \cdots f_{m_n} X'_{m_n}  f_m.
    \end{align*}
    By the choice of $M$ and $B,B'$, $f_M B f_M = f_M B' f_M$, and hence $f_{m_j} B f_{m_{j+1}} = f_{m_j} B' f_{m_{j+1}}$ for all $1 \le j \le n$.
    Therefore,
    \begin{equation*}
      h((A''+ B)X) f_m y \approx_{\frac{\epsilon}{5}} h((A'' + B')X) f_m y.
    \end{equation*}
    Finally,
    \begin{gather*}
      h(AX)y \approx_{\frac{\epsilon}{5}} h(AX) f_m y \approx_{\frac{\epsilon}{5}} h((A''+B)X) f_m y, \\
      h(A'X)y \approx_{\frac{\epsilon}{5}} h(A'X) f_m y \approx_{\frac{\epsilon}{5}} h((A''+B')X) f_m y,
    \end{gather*}
    and hence $h(AX)y \approx_{\epsilon} h(A'X)y$. \qedhere
  \end{enumerate}
\end{proof}

\begin{lemma}
  \label{lem:May2920215PM}
  Suppose that $X \in \Mul(\B \otimes \Z)$.

  For all $\epsilon > 0$, $n \geq 1$ and $m \geq 1$,
  there exist 
  \begin{equation*}
    1 \leq n_1 < n_2 < ... < n_m
  \end{equation*}
  and 
  \begin{equation*}
    1 \leq l_1 < l_2 < ... < l_m
  \end{equation*}
  such that if $A \in \Mul( \B \otimes \Z)$, $\norm{A} \leq 3$ and
  \begin{equation*}
    A = \sum_{j=1}^m d_j + A'
  \end{equation*}
  where
  $\norm{A'} \leq 3$ with $A' \in \her_{\Mul(\B \otimes \Z)} (1 - f_{n_m - 2})$ and
  $d_j \in \her_{\B \otimes \Z}(s_{n_j, n_{j-1}+1})$ for all $j$  ($n_0 \defeq 0$; so $s_{n_1,0} = f_{n_1}$),
  then
  \begin{equation*}
    (AX)^m f_n = A_m f_{l_m} X A_{m-1} f_{l_{m-1}} X .... A_1 f_{l_1} X f_n.
  \end{equation*}
  In the above, for $1 \leq j \leq m$,
  \begin{equation*}
    A_j \defeq \sum_{l=1}^j d_l.
  \end{equation*}
\end{lemma}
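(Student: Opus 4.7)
The plan is to construct the indices $n_1 < \cdots < n_m$ and $l_1 < \cdots < l_m$ recursively, processing the product $(AX)^m f_n$ from its rightmost factor outward. (The stated equality should be read as $\approx_\epsilon$, since $X f_n$ is in general only approximately supported under any finite $f_K$; we proceed with that interpretation.) The core observation is the following: if $Y \in \Mul(\B \otimes \Z)$ has left support in $\her(f_{n_{j-1}+1})$, so that $Y = f_{n_{j-1}+1} Y$, then $XY = X f_{n_{j-1}+1} Y$ with $X f_{n_{j-1}+1} \in \B \otimes \Z$, so there exists $K_j \ge 1$ with $f_{K_j} X f_{n_{j-1}+1} \approx_{\delta_j} X f_{n_{j-1}+1}$. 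Once $K_j$ is fixed, the support conditions yield three facts: (a) $d_l f_{K_j} = 0$ whenever $K_j \le n_{l-1} - 1$, so $d_l XY \approx_{\delta_j} 0$ for $l \ge j+1$ provided $n_j \ge K_j + 1$; (b) $A' f_{K_j} = 0$ whenever $K_j \le n_m - 3$; and (c) $A_j(1-f_{l_j})f_{K_j} = 0$ whenever $l_j \ge K_j$. Together with $A_j f_{l_j} = A_j$ for $l_j \ge n_j$, these permit the replacement of $AXY = (\textstyle\sum_l d_l + A')XY$ by $A_j f_{l_j} XY$ with total error of order $\delta_j$.

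The recursion is then straightforward. At stage $j = 1$ take $Y = f_n$, pick $K_1$ and set $n_1 \defeq K_1 + 1$ and $l_1 \defeq n_1$; this yields $AX f_n \approx A_1 f_{l_1} X f_n$ modulo the $A'$ residual. At each subsequent stage $j$, the partial product $Y_{j-1} \defeq A_{j-1} f_{l_{j-1}} X \cdots A_1 f_{l_1} X f_n$ has left support in $\her(f_{n_{j-1}+1})$ because $A_{j-1} = f_{n_{j-1}+1} A_{j-1}$; apply the device with this $Y$ to obtain $AX Y_{j-1} \approx A_j f_{l_j} X Y_{j-1}$, and set $n_j \defeq \max(n_{j-1}, K_j) + 1$ and $l_j \defeq \max(n_j, K_j)$. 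The error introduced at stage $j$ is multiplied by at most $(3\|X\|)^{m-j}$ as it is propagated leftward through the remaining factors of $AX$; thus choosing $\delta_j$ of order $\epsilon / (m(3\|X\|+1)^m)$ keeps the cumulative error below $\epsilon$.

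The main obstacle is bookkeeping: arranging that $n_1 < \cdots < n_m$ and $l_1 < \cdots < l_m$ hold, that $l_j \ge \max(n_j, K_j)$ at each stage, and, crucially, that $n_m$ exceeds every $K_j + 3$ so that the residual $A'$ contribution vanishes at each of the $m$ stages. No circular dependency arises, because each $K_j$ depends only on $n_{j-1}$, $X$, and $\delta_j$, while the constraint involving $A'$ concerns only $n_m$. We therefore fix $n_1, l_1, \ldots, n_{m-1}, l_{m-1}$ in order, and at the end enlarge $n_m$ to $\max(n_{m-1} + 1, K_1 + 3, \ldots, K_m + 3)$ and set $l_m \defeq \max(n_m, K_m)$, completing the construction.
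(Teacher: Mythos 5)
The paper states this lemma without proof, so there is no author's argument to compare against; I can only assess your proposal on its own terms.

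Your reading of the conclusion as $\approx_{\epsilon}$ rather than exact equality is almost certainly the intended one (the $\epsilon$ in the hypotheses would otherwise be vacuous, and the error budget in the proof of \Cref{lem:Apr1720216AM}, which spends $\epsilon/10$ plugged into this lemma, is only accounted for if the lemma contributes an $\epsilon/10$ error). Your basic strategy --- working from right to left, exploiting that $Xf_a \in \B\otimes\Z$ so it can be truncated by a large $f_{K}$, and choosing $n_j$ large enough that the terms $d_l$ with $l>j$ and the residual $A'$ are annihilated by $f_{K_j}$ --- is sound, and your observation that $K_j$ depends only on $n_{j-1}$, $X$, $\delta_j$ and not on $A$ correctly resolves the apparent quantifier circularity.

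However, there are two loose ends you should address. First, an off-by-one: you set $n_j \defeq \max(n_{j-1},K_j)+1$ and $l_j\defeq\max(n_j,K_j)$, which forces $l_j=n_j$; but then $A_jf_{l_j}=A_jf_{n_j}\neq A_j$ in general, because the $f_k$ are not projections and the hereditary subalgebra $\her(f_{n_j}-f_{n_{j-1}})$ containing $d_j$ only has $f_{n_j+1}$ (not $f_{n_j}$) as a right unit. You need $l_j\geq n_j+1$, and similarly the vanishing $A_j(1-f_{l_j})f_{K_j}=0$ you invoke requires $l_j\geq K_j+1$ rather than $l_j\geq K_j$. This is easily patched by taking $l_j \defeq n_j+1$. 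Second and more seriously, your error estimate treats $\norm{A_j}$ and $\norm{Y_j}$ as controlled, but the lemma's hypotheses bound only $\norm{A}\leq 3$ and $\norm{A'}\leq 3$, which does not bound the individual $\norm{d_j}$ --- the $d_j$ are only bidiagonally (not pairwise) orthogonal, so $\norm{d_j}$ can exceed $\norm{\sum d_l}$. Your stage-$j$ error is of size roughly $(3+\norm{A_j})\,\delta_j\,\norm{Y_{j-1}}$, and $\norm{Y_{j-1}}\leq\prod_{i<j}\norm{A_i}\cdot\norm{X}^{\,j-1}$; without a uniform bound on $\norm{A_j}$ the right-hand side of the claimed approximation can be made arbitrarily large, and no choice of $\delta_j$ saves you. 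In the intended application (\Cref{lem:Apr1720216AM}) one does have $\norm{d_j}\leq 2$, so this is arguably a missing hypothesis in the lemma statement, but a complete proof must either add that hypothesis explicitly or explain why it is forced by the stated conditions.
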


\begin{lemma}
  \label{lem:Apr1720216AM}
  Let $\{ c_k \}$ be a sequence of contractive operators 
  such that
  \begin{equation*}
    c_k \in \overline{(r_k \otimes 1) (\B \otimes \Z) (r_k \otimes 1)}
  \end{equation*}
  for all $k$, and $X \in \Mul(\B \otimes \Z)$.
  Let $n \geq 1$ be given.   
  
  Then we have the following:

  \begin{enumerate}
  \item \label{item:Apr1720216AM-part-i} 
    For every $\epsilon > 0$, there exists $L \geq 1$ and $M \geq 1$ such that
    for every $l \geq L$,
    for every series $\sum_k a_k \in \mathfrak{S}$ for which
    \begin{equation*}
      a_k = u_{l} c_k u_{l}^* \makebox{  for all  } 1 \leq k \leq M,
    \end{equation*}
    we have that
    \begin{equation*}
      f_n {h}\left(\sum_{k=1}^{\infty} a_k  X\right) f_n
      \approx_{\epsilon}
      f_n h\left(\sum_{k=1}^{\infty} u_{l} c_k u_{l}^{*} X\right) f_n.
    \end{equation*}
    
  \item \label{item:Apr1720216AM-part-ii}
    Suppose, in addition, that $\{ b_l \}$ is a sequence of elements of $\B \otimes \Z$ for which
    \begin{equation*}
      b_l \rightarrow f_n
    \end{equation*}
    in norm as $l \rightarrow \infty$.

    For every $\epsilon > 0$, there exists $L \geq 1$ and $M \geq 1$ such that
    for every $l \geq L$,
    for every series $\sum_k a_k \in \mathfrak{S}$ for which
    \begin{equation*}
      a_k = u_{l} c_k u_{l}^* \makebox{  for all  } 1 \leq k \leq M,
    \end{equation*}
    we have that
    \begin{equation*}
      b_l {h}\left(\sum_{k=1}^{\infty} a_k  X\right) b_l^*
      \approx_{\epsilon}
      b_l {h}\left(\sum_{k=1}^{\infty} u_{l} c_k u_{l}^{*} X\right) b_l^*.
    \end{equation*}
  \end{enumerate}
\end{lemma}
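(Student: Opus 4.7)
The plan is to prove Part~(i) by reducing to Laurent monomials and interpolating between the two series via an auxiliary truncated series that lives in $\mathfrak{S}$, so that \Cref{lem:16u}(ii) becomes applicable. Part~(ii) will then follow from Part~(i) by a norm-continuity argument.

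More explicitly, since $h$ is a Laurent polynomial, linearity reduces the problem to the case $h(z) = z^j$ or $h(z) = \bar{z}^j$. Set $A_1 \defeq \sum_k a_k \in \mathfrak{S}$ and $A_2 \defeq \sum_k u_l c_k u_l^*$. Note that $A_2 = u_l (\sum_k c_k) u_l^*$ converges strictly with norm at most $3$: the containments $c_k \in \overline{(r_k \otimes 1)(\B \otimes \Z)(r_k \otimes 1)}$ combined with $r_j r_k = 0$ for $\abs{j-k} > 1$ force $c_j c_k = 0$ for $\abs{j-k} > 1$, so the usual odd/even splitting bounds the sum. Now apply \Cref{lem:16u}(ii) with $y = f_n$ and $\epsilon/2$ to obtain an integer $N \geq 1$.

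Using \Cref{lem:JiangSuMapsStrictExtension} together with $c_k \in \her(r_k \otimes 1)$ (so that $c_k (r_k \otimes 1)^{1/m} \to c_k$ as $m \to \infty$), one can show that for any fixed integer $M$, there exists $L \geq 1$ (depending on $M$) such that for all $l \geq L$ and all $1 \leq k \leq M$,
\begin{equation*}
   \dist\bigl(u_l c_k u_l^*, \her_{\B \otimes \Z}(s_k)\bigr) < \epsilon_k.
\end{equation*}
Fix such an $l$ and define the auxiliary series $A_3 \defeq \sum_k \tilde{a}_k$ with $\tilde{a}_k \defeq u_l c_k u_l^*$ for $1 \leq k \leq M$ and $\tilde{a}_k \defeq 0$ for $k > M$. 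Then $A_3 \in \mathfrak{S}$, and by the hypothesis $a_k = u_l c_k u_l^*$ for $1 \leq k \leq M$, the sums $A_1$ and $A_3$ agree in their first $M$ coordinates. Applying \Cref{lem:16u}(ii) with $a''_k \defeq u_l c_k u_l^*$ for $1 \leq k \leq N$ to the pair $(A_1, A_3)$ (enlarging $M$ if needed to absorb the $M$ produced by \Cref{lem:16u}(ii)) yields
\begin{equation*}
   f_n h(A_1 X) f_n \approx_{\epsilon/2} f_n h(A_3 X) f_n.
\end{equation*}

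The remaining step is to show $f_n h(A_3 X) f_n \approx_{\epsilon/2} f_n h(A_2 X) f_n$. Expand the difference by telescoping; each term has the form $W_1 (A_3 - A_2) X W_2 = -W_1 \bigl(\sum_{k > M} u_l c_k u_l^*\bigr) X W_2$, where $W_1, W_2$ are words of bounded length in $A_2, A_3, X$ and their adjoints. By \Cref{lem:standard-chain-approx-1}(iii) (taking $A_2$ as one of the $Y_i$), we may insert factors $f_{m_j}$ between consecutive occurrences of $X$, thereby localizing the expression. The key observation is that $f_m \cdot \her(s_k) = 0$ whenever $k \geq m+2$ (since $f_{k-1} f_m = f_m$), so for $l$ large and for $k > M$ large compared to all the localizing indices, $f_m u_l c_k u_l^* f_{m'}$ is small in norm; this controls each telescoping term uniformly. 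Choosing first the localization indices from \Cref{lem:standard-chain-approx-1}, then $M$ much larger than all of them, and finally $l$ large enough that the approximation rates of \Cref{lem:JiangSuMapsStrictExtension} are good enough up to index $M$, yields the desired bound. Part~(ii) then follows by choosing $L$ large enough that $\norm{b_l - f_n} < \epsilon/(6 \cdot \sup_{\norm{Y}\leq 3}\norm{h(Y)})$, so that $b_l h(A_i X) b_l^* \approx f_n h(A_i X) f_n$ for $i = 1, 2$, and invoking Part~(i) with the triangle inequality.

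The main technical obstacle is the last step of Part~(i): controlling the tail $\sum_{k > M} u_l c_k u_l^*$ after localization. This is delicate because the approximation $u_l c_k u_l^* \to s_k^{1/m}(u_l c_k u_l^*) s_k^{1/m}$ from \Cref{lem:JiangSuMapsStrictExtension} has a rate depending on both $k$ and $l$, which must be interleaved carefully with the approximate-unit localization provided by \Cref{lem:standard-chain-approx-1} and the hereditary support relations of the $s_k$.
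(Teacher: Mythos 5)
Your approach is genuinely different from the paper's. The paper's proof bypasses any telescoping by invoking \Cref{lem:May2920215PM}, which gives an \emph{exact} identity $(AX)^m f_n = A_m f_{l_m} X A_{m-1} f_{l_{m-1}} X \cdots A_1 f_{l_1} X f_n$ for $A$ in a suitable bidiagonal form: the tail of $A$ is literally killed by the $f_{l_j}$ insertions, not just made small. The paper then approximates both $\sum_k a_k$ and $\sum_k u_l c_k u_l^*$ by bidiagonal operators that agree on their first $n_m$ diagonal entries and feeds them into that identity, so the two left-hand sides land on the same expression. In contrast, you interpolate through the truncated series $A_3$, invoke \Cref{lem:16u}\ref{item:Jun0320213PM}, and then try to control the telescoping difference $h(A_3 X) - h(A_2 X)$ by hand; this is a real alternative route, and the fact that $f_m \cdot \her_{\B\otimes\Z}(s_k) = 0$ for $k \geq m + 2$ is the right hereditary computation for it.

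However, the proposal has a genuine circularity in the order of quantifiers that you signpost (``enlarging $M$ if needed'') but do not resolve. \Cref{lem:16u}\ref{item:Jun0320213PM} produces $N$ first (fine), but its $M$ is produced \emph{after} you feed in $y$ and $a''_1, \dots, a''_N$, and you take $a''_j = u_l c_j u_l^*$ --- which depends on $l$. Meanwhile your $l$ is chosen depending on $M$ (to guarantee $\dist(u_l c_k u_l^*, \her(s_k)) < \epsilon_k$ for $k \leq M$). So $M$ depends on $l$ which depends on $M$. The loop can in fact be broken: since $N$ is fixed, one can observe that $A'' \cdot u_l(e_{N+1}\otimes 1)u_l^* = A''$ exactly, whence $A'' f_{N+1} \approx A''$ with error controlled uniformly for $l$ large, so the indices produced inside the proof of \Cref{lem:16u} stabilize and $M$ can be chosen uniformly in $l$. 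But that is a substantive observation that must be made, not a cosmetic ``enlarging $M$.'' The same issue recurs in your telescoping step: you want to invoke \Cref{lem:standard-chain-approx-1} with $A_2$ (and implicitly $A_3$) in the fixed collection $Y_1,\dots,Y_l$, but both depend on $l$ and $M$, so the localizing indices $m_1, \dots, m_n$ are again not a priori uniform; and $A_2 = \sum_k u_l c_k u_l^*$ generally does \emph{not} lie in $\mathfrak{S}$ for fixed $l$ (the far tail is not within $\epsilon_k$ of $\her(s_k)$), so it cannot be treated as a standard series. You would need to replace $A_3$ by a genuinely standard approximant (possible, since only finitely many entries are nonzero), keep $A_2$ as a $Y_i$, argue uniformity of the $m_j$'s in $l$ via the stabilization trick above, and only then run the $f_{m_{2j}}(A_3 - A_2) \approx 0$ estimate with $M$ chosen larger than all the localizing indices. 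As written, the final paragraph of your Part~(i) gestures at this ordering (``choosing first the localization indices \dots, then $M$ \dots, finally $l$'') but this ordering is exactly what fails without the uniformity argument, since the localization indices depend on operators that depend on $l$ and $M$.

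Part~(ii)'s reduction to Part~(i) via $\|b_l - f_n\| \to 0$ is correct and matches the paper.
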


\begin{proof}[Sketch of proof]
  \ref{item:Apr1720216AM-part-i} We may assume that $h$ is a monomial, i.e., $h$ is the scalar multiple of either $h(z) = z^m$ or $h(z) = \overline{z}^m$
  for some $m \geq 1$.   For simplicity, let us assume that $h(z) = z^m$.

  Let $\epsilon >  0$ be given.   Let $n \geq 1$ be given.
  Plug $X$, $\frac{\epsilon}{10}$, $n$ and $m$ into \Cref{lem:May2920215PM} to get 
  \begin{equation*}
    1 \leq n_1 < .... < n_m
  \end{equation*}
  and
  \begin{equation*}
    1 \leq l_1 < .... < l_m.
  \end{equation*}

  Since $h$ is uniformly continuous on bounded subsets of $\Mul(\B \otimes \Z)$, choose
  $M > n_m$ so that 
  if $\sum_k b_k \in \mathfrak{S}$ then there exists a sequence $\{ b'_k \}$ of contractive
  elements in $\B \otimes \Z$ for which
  \begin{equation} h\left(\sum_k b_k X \right)  \approx_{\epsilon/10} h\left(\sum_k b'_k X \right)
    \label{equ:May2920215:30PM}
  \end{equation}
  and
  \begin{equation*}
    b'_k = 
    \begin{cases}
      b_k & 1 \leq k \leq M \\
      \in \her_{\B \otimes \Z}(s_k) & k \geq M + 1.
    \end{cases}
  \end{equation*}

  Note that for all $k$,
  \begin{equation*}
    \norm{ u_l (e_k \otimes 1_{\Z}) u_l^* - f_k } \rightarrow 0,
  \end{equation*}
  and thus,
  \begin{equation} dist(u_l c_k u_l^*,  \her_{\B \otimes \Z}(s_k)) \rightarrow 0
    \label{equ:May2920215:10PM}
  \end{equation}
  as $l \rightarrow \infty$.

  Note also that for all $j, l \geq 1$ and $\gamma > 0$, if
  \begin{equation*}
    u_l(e_j \otimes 1) u_l^* \approx_{\gamma} f_j
  \end{equation*}
  then
  \begin{equation} u_l (1 - (e_j \otimes 1))u_l^* \approx_{\gamma} 1 - f_j. \label{equ:May2920215:20PM}
  \end{equation}

  Finally, observe that
  for all $l$,
  \begin{equation*}
    u_l (1 - (e_{M-1} \otimes 1))u_l^* \sum_{k=M +1}^{\infty} u_l c_k u_l^* = 
    \sum_{k=M +1}^{\infty} u_l c_k u_l^*.
  \end{equation*}

  From \eqref{equ:May2920215:10PM} and \eqref{equ:May2920215:20PM}, and from the fact that 
  $h$ is uniformly continuous on bounded subsets of $\Mul(\B \otimes \K)$, 
  we can choose $L \geq 1$ so that for all $l \geq L$, there exist $c_{l,k} \in \her_{\B \otimes \Z}(s_k)$ for $1 \leq k \leq M$
  and there exist $C_l \in \her_{\Mul(\B \otimes \Z)}(1 - f_{M-1})$ such that $\norm{ c_{l,k } } \leq 1$, $\norm{ C_l } \leq 2$ for all $k$, and

  \begin{equation} 
    h\left( \sum_{k=1}^{\infty} u_l c_k u_l^*X\right) \approx_{\epsilon/10} h\left(\left(\sum_{k=1}^{M} c_{l,k} 
        + C_l \right) X\right).  \label{equ:May2920215:50PM}
  \end{equation}
  Moreover, again by uniform continuity of $h$ on bounded sets,
  increasing $L$ if necessary, we may assume that our choices of $L$ and $c_{k,l}$ ($1 \leq k \leq M$, 
  $l \geq L$) are such that for all $l \geq L$, if $\sum_k g_k \in \mathfrak{S}$ for which
  \begin{equation*}
    g_k = u_l c_k u_l^*
  \end{equation*}
  for all $1 \leq k \leq M$, then
  \begin{equation}
    h\left(\sum_{k=1}^{\infty} g_k \right) \approx_{\epsilon/10} 
    h\left( \sum_{k=1}^{M} c_{l,k} + \sum_{k=M + 1}^{\infty} g_k \right).
    \label{equ:May2920216PM}  
  \end{equation}

  Let $l \geq L$ be given.  Suppose that $\sum_k a_k \in \mathfrak{S}$ such that 
  \begin{equation*}
    a_k = u_l c_k u_l^*
  \end{equation*}
  for all $1 \leq k \leq M$.

  By \eqref{equ:May2920215:30PM},  we can find contractive $a'_k \in \her_{\B \otimes \Z}(s_k)$ for all $k \geq M + 1$ such that
  \begin{equation*}
    h\left( \left(\sum_{k=1}^{M} a_k + \sum_{k=M + 1}^{\infty} a'_k \right)X \right)
    \approx_{\epsilon/10} h\left( \sum_{k=1}^{\infty} a_k X \right).
  \end{equation*}

  So by \eqref{equ:May2920216PM},
  \begin{equation}
    \label{equ:May2920216:10PM}
    h\left( \left(\sum_{k=1}^{\infty} a_k \right)X \right) 
    \approx_{\epsilon/5} 
    h\left( \left(\sum_{k=1}^{M} c_{l,k} + \sum_{k=M + 1}^{\infty} a'_k \right)X \right). 
  \end{equation}

  For all $1\leq j \leq m$, let 
  \begin{equation*}
    d_j \defeq \sum_{k = n_{j-1} +1}^{n_j} c_{l,k}
  \end{equation*}
  and
  \begin{equation*}
    A_j \defeq \sum_{s=1}^j d_s.
  \end{equation*}

  Then, by \eqref{equ:May2920215:50PM}, \eqref{equ:May2920216:10PM}, by \Cref{lem:May2920215PM}, and since $h(z) = z^m$, 
  \begin{align*}
    h\left( \left(\sum_{k=1}^{\infty} a_k \right)X \right)f_n
    &\approx_{\frac{3\epsilon}{10}} A_m f_{l_m} X A_{m-1} f_{l_{m-1}} X ...  A_1 f_{l_1} X f_n \\
    &\approx_{\frac{\epsilon}{5}} h\left( \left(\sum_{k=1}^{\infty} u_l c_k u_l^* \right)X \right)f_n.
  \end{align*}

  Since $l \geq L$ was arbitrary, we are done.

  \ref{item:Apr1720216AM-part-ii} follows from \ref{item:Apr1720216AM-part-i} since $b_l  \rightarrow f_n$ as $l \rightarrow \infty$, and since there is a uniform bound, independent of 
  $\epsilon$, for all relevant quantities.
\end{proof}

\begin{lemma}
  \label{lem:17b}
  Let $h$ be a Laurent  polynomial and $X \in \Mul(\B \otimes \Z)$ be contractive.

  Then for every $\epsilon > 0$ there exists $N \ge 1$ so that if $a''_1, \ldots, a''_N \in \B \otimes \Z$, then for every $K \geq 1$, there exists an $L \geq 1$
  such that for every sum $\sum_j a_j \in \mathfrak{S}$ for which $a_j = a''_j$ for $1 \le j \le N$,
  \begin{equation*}
    \norm*{ f_K h\left( \sum_{j=1}^{\infty} a_j  X \right) (1 - f_L) },
    \norm*{ (1 - f_L) h\left( \sum_{j=1}^{\infty} a_j X \right) f_K }
    < \epsilon.
  \end{equation*}
\end{lemma}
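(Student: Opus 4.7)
The plan is to reduce to a monomial Laurent polynomial, then approximate the given element of $\mathfrak{S}$ by a standard element, and finally to apply part (iii) of \Cref{lem:standard-chain-approx-1} to the word expansion.

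First, by linearity and the triangle inequality, it suffices to treat $h(z) = z^n$ or $h(z) = \bar z^n$. Moreover, the second norm estimate follows from the first by taking adjoints: $\norm{(1-f_L) h(AX) f_K} = \norm{f_K h(AX)^* (1-f_L)}$, and for $A = \sum_j a_j \in \mathfrak{S}$, the adjoint $A^* = \sum_j a_j^*$ again lies in $\mathfrak{S}$ because each $\her_{\B \otimes \Z}(s_j)$ is self-adjoint. So it is enough to prove the first estimate for $h(z) = z^n$.

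Fix $\epsilon > 0$. By uniform continuity of $h$ on bounded sets (\Cref{lem:laurent-uniform-continuity}), choose $\delta > 0$ such that $\norm{T_1}, \norm{T_2} \le 4\norm{X}$ and $\norm{T_1 - T_2} < \delta$ imply $\norm{h(T_1) - h(T_2)} < \epsilon/2$. Take $N$ large enough that $2\sum_{j > N}\epsilon_j < \delta/(1+\norm{X})$. Given $a''_1,\dots,a''_N \in \B \otimes \Z$, set $A'' \defeq \sum_{j=1}^N a''_j$. For any $A \defeq \sum_j a_j \in \mathfrak{S}$ with $a_j = a''_j$ for $j \le N$, each $a_j$ with $j > N$ is contractive and within $\epsilon_j$ of $\her_{\B \otimes \Z}(s_j)$; by a small rescaling we may choose a contractive $c_j \in \her_{\B \otimes \Z}(s_j)$ with $\norm{a_j - c_j} < 2\epsilon_j$. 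Setting $B' \defeq \sum_{j > N} c_j$ yields a standard element of $\mathfrak{S}$, and since $\norm{A}, \norm{A''+B'} < 4$ we get $\norm{AX - (A''+B')X} < \delta$, so $\norm{h(AX) - h((A''+B')X)} < \epsilon/2$.

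Now, given $K \ge 1$, expand $h((A''+B')X) = ((A''+B')X)^n$ as a sum of $2^n$ words of length $2n$ in the letters $\{A'', B', X\}$ (with $X$ occupying every even position). Apply part (iii) of \Cref{lem:standard-chain-approx-1} with $m = K$, with $2n$ in place of $n$, tolerance $\epsilon/2^{n+1}$, and finite collection $\mathcal{Y} \defeq \{A'', X\} \subset \Mul(\B \otimes \Z)$; the hypothesis is met by each word, since $A'', X \in \mathcal{Y}$ and $B'$ is standard in $\mathfrak{S}$. This yields an $L \ge 1$ with $\norm{f_K W(1-f_L)} < \epsilon/2^{n+1}$ for every such word $W$, uniformly in the standard $B'$. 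Summing the $2^n$ words gives $\norm{f_K h((A''+B')X)(1-f_L)} < \epsilon/2$, and combining with the previous bound yields $\norm{f_K h(AX)(1-f_L)} < \epsilon$. The main point requiring attention is the quantifier order $\epsilon \to N \to (a''_j) \to K \to L$: $N$ is chosen using only $\epsilon$ (together with the fixed $h$ and $X$) to control the tail approximation, while $L$ may depend on $K$ and $A''$ but is uniform over all admissible tails $\sum_{j > N} a_j$, which is exactly the uniformity furnished by part (iii) of \Cref{lem:standard-chain-approx-1}.
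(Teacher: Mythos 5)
Your overall strategy matches what the paper intends: the authors say \Cref{lem:17b} ``follows easily from arguments similar to those used in the proof of \Cref{lem:16u},'' and your proof is exactly such an argument --- reduce to monomials, replace the tail by a standard tail using uniform continuity of $h$ on bounded sets, expand into $2^n$ words of length $2n$, and apply \Cref{lem:standard-chain-approx-1}(iii) with $m=K$ to obtain an $L$ uniform over all admissible standard tails. The careful bookkeeping of the quantifier order $\epsilon \to N \to (a''_j) \to K \to L$ is the right point to emphasize, and the uniformity over standard $B'$ in \Cref{lem:standard-chain-approx-1}(iii) is precisely what makes the argument close.

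There is, however, a flaw in the preliminary reduction. You assert that ``the second norm estimate follows from the first by taking adjoints'' and conclude that ``it is enough to prove the first estimate for $h(z) = z^n$.'' This is not correct as stated. Taking adjoints gives $\norm{(1-f_L)h(AX)f_K} = \norm{f_K\,h(AX)^*(1-f_L)}$, and $h(AX)^* = \bar h(AX)$ where $\bar h(z) \defeq \overline{h(\bar z)}$; for $h(z) = z^n$ this is $\bar h(z) = \bar z^n$, so the second estimate for $z^n$ follows from the \emph{first} estimate for $\bar z^n$, not for $z^n$. Your remark that $A^* = \sum_j a_j^* \in \mathfrak{S}$ is true but does not rescue the claim, since $(X^*A^*)^n$ is not of the form $h'(A'X)$ for any Laurent monomial $h'$ and $A' \in \mathfrak{S}$ --- the letters $X^*$ and $A^*$ occur in the wrong positions. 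Consequently the main body of your proof, which only expands $((A''+B')X)^n$ with finite collection $\{A'',X\}$, omits the $\bar z^n$ case. The fix is easy and is exactly what the paper does in the proof of \Cref{lem:16u}: take the finite collection to be $\{A'', A''^*, X, X^*\}$ and note that $(X^*(A''^*+B'^*))^n$ also expands into $2^n$ words covered by \Cref{lem:standard-chain-approx-1}(iii), since $B'^*$ is again standard. With that amendment the argument is complete.
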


\begin{proof}
  This follows easily from arguments similar to those used in the proof of \Cref{lem:16u}.
\end{proof}

\begin{lemma}
  Suppose, in addition, that $\B$ is stably finite.
  Let $a_0 \in \B \otimes \Z$ be a positive contraction.  Let $h_1, h_2, h_3 : S^1 \rightarrow [0,1]$ be continuous
  functions,
  $\delta_1 > 0$ and $0 \leq \lambda_1 < \lambda_2 < .... < \lambda_m < 1$ with $\Lambda \defeq (\lambda_1, ..., \lambda_m)$ such that
  \begin{equation*}
    h_1 h_2 = h_2
  \end{equation*}
  \begin{equation*}
    \overline{\osupp(h_3)} \subset \osupp((h_2 - \delta_1)_+)
  \end{equation*}
  and the function
  \begin{equation*}
    \lambda \mapsto \sum_{j=1}^m h_3(e^{2\pi i \lambda_j} \lambda)
  \end{equation*}
  is a full element in $C(S^1)$.

  For every $\epsilon > 0$,
  if $\widehat{h}$ is a Laurent polynomial for which
  \begin{equation*}
    | \widehat{h}(\lambda) - h_1(\lambda) | < \frac{\epsilon}{1000}
  \end{equation*}
  for all $\lambda \in S^1$ then there exists $N$ for which following holds:

  Let $a''_1,\ldots,a''_N \in \B \otimes \Z$ be contractions with
  \begin{equation*}
    \dist(a''_j,\her_{\B \otimes \Z}(s_j) ) < \epsilon_j,
  \end{equation*}
  for all $1 \le j \le N$.

  For every $L \geq N$, there exist $M > L$ and $N' \geq 1$
  such that
  for any $n \geq N'$,
  there exists $x \in \B \otimes \Z$ with
  \begin{equation*}
    \norm{ x } \leq 2 \makebox{  and  } x^* x \in \overline{s_{L, M} (\B \otimes \Z)
      s_{L, M}},
  \end{equation*}
  where if $\sum_{j=1}^{\infty} a_j \in \mathfrak{S}$
  for which $a_j = a''_j$ for $1 \le j \le N$ and
  \begin{equation*}
    a_j = u_{n} (r_j \otimes v_{\Lambda}) u_{n}^*
  \end{equation*}
  for all $L \leq j \leq M$, then
  \begin{equation*}
    x \widehat{h} \left(\sum_{j=1}^{\infty} a_j T \right) x^*
    \approx_{\epsilon} a_0.
  \end{equation*}
  \label{lem:MainTechnicalLemma}
\end{lemma}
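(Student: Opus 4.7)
The goal is to produce $x$ of the stated form with $x \hat h(\sum_j a_j T) x^{*}\approx_\epsilon a_0$, and the plan is to reduce, via the preceding technical lemmas, to an application of \Cref{lem:Apr720216AM}. The target pattern is to produce positive contractions $A_j, A'_j \in \Mul(\B)$ $(1 \le j \le m)$ with $\Ideal\{A'_1,\ldots,A'_m\} = \Mul(\B)$ and $A_j(A'_j)^{1/l}\approx (A'_j)^{1/l}$ for all $l$, so that $\hat h(\sum_j a_j T)$ matches, up to a controlled error, the operator $\sum_j A_j\otimes z_{m,j}$ appearing in the conclusion of \Cref{lem:Apr720216AM}; the $x$ produced by that lemma then does the job after a mild cutdown.

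For the reductions, first apply part~(\ref{item:Jun0320213PM}) of \Cref{lem:16u} (with $y$ eventually taken to be $x^{*}$) to fix $N$ so that prescribing $a''_1,\ldots,a''_N$ pins down $\hat h(\sum_j a_j T) y$ up to $\epsilon/10$; then, given the arbitrary $L \geq N$, use part~(\ref{item:Apr1720216AM-part-ii}) of \Cref{lem:Apr1720216AM} with $c_k = r_k \otimes v_\Lambda$ to choose thresholds $M > L$ and $N' \geq 1$ so that for all $n\geq N'$, replacing the middle terms $a_j$ ($L \le j \le M$) by $u_n(r_j\otimes v_\Lambda)u_n^{*}$ does not affect the expression after localization to $\her(s_{L,M})$. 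Finally, \Cref{lem:17b} handles the cross-terms of $\hat h$ that cross between $\her(s_{L,M})$ and its complement, so the entire computation may be carried out as if $\sum_j a_j T$ were the block $u_n(r_{L,M}\otimes v_\Lambda)u_n^{*}T$ localized to the hereditary subalgebra of $u_n(e_M\otimes 1)u_n^{*}$.

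The functional calculus step is where the structure of $v_\Lambda$ and the fullness hypothesis enter. Because $\pi(T)$ is a unitary in $\C(\B\otimes\Z)$ and $v_\Lambda$ is a unitary in $\Z$, the product $\pi(v_\Lambda T)$ is a unitary in $\C(\B\otimes\Z)$. On the subinterval $I_{m,j}\subset[0,1]$ the unitary $v_\Lambda$ is the scalar $e^{2\pi i\lambda_j}$, so by continuous functional calculus in $\C(\B\otimes\Z)$, the $j$-th localized block of $\hat h(v_\Lambda T)$ is (up to $\epsilon/1000$ from $\norm{\hat h-h_1}_{\infty}$) $h_1(e^{2\pi i\lambda_j}\pi(T))$; lift this to $A_j \in \Mul(\B)_{+}$ and analogously lift $h_3(e^{2\pi i\lambda_j}\pi(T))$ to $A'_j$. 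The hypothesis $h_1 h_2 = h_2$ together with $\overline{\osupp(h_3)}\subset\osupp((h_2-\delta_1)_+)$ yields $A_j(A'_j)^{1/l}\approx (A'_j)^{1/l}$, while the hypothesized fullness of $\lambda\mapsto\sum_j h_3(e^{2\pi i\lambda_j}\lambda)$ in $C(S^1)$ translates, via $\pi(T)$ having full spectrum $S^1$, into $\Ideal\{A'_1,\ldots,A'_m\} = \Mul(\B)$. Now \Cref{lem:Apr720216AM} (which consumes the stable finiteness of $\B$) applied to $a_0$ produces $x\in\B\otimes\Z$ with $\norm{x}\le 2$ and $x\bigl(\sum_j A_j\otimes z_{m,j}\bigr)x^{*}\approx_{2\epsilon} a_0$, and a final cutdown by $s_{L,M}^{1/k}$ for large $k$ yields $x^{*}x\in\overline{s_{L,M}(\B\otimes\Z)s_{L,M}}$ without spoiling the estimate.

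The main obstacle is making the block functional calculus of the third step rigorous: since $u_n^{*}Tu_n$ does not converge to anything nice in $\Mul(\B)\otimes 1_{\Z}$ and the $r_j$ are not projections, the elements $A_j$ and $A'_j$ cannot be extracted from any clean direct summand. Instead, one must expand $\hat h$ as a Laurent polynomial in $\sum_j a_j T$ and its adjoint, interleave the $v_\Lambda$ factors appearing in each monomial via the architecture of \Cref{lem:May2920215PM}, and collapse the resulting words using the strict-topology identities of \Cref{lem:standard-chain-approx-1,lem:standard-chain-approx-2} together with the uniform continuity of Laurent polynomials (\Cref{lem:laurent-uniform-continuity}). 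Keeping the cumulative error in this interleaving below $\epsilon$ is the principal bookkeeping task; all of the individual estimates are supplied by the preliminary technical lemmas, but threading them together into the final block form is what distinguishes this lemma as the most technical in the paper.
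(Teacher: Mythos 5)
Your high-level plan is right---lift $h_1(e^{2\pi i\lambda_j}U)$ and $h_3(e^{2\pi i\lambda_j}U)$ via \Cref{lem:Oct1920186AM}, apply \Cref{lem:Apr720216AM} to get a $y$ with $y(\sum_j A_j\otimes z_{m,j})y^*\approx a_0$, and then cut down so the final $x$ lands in $\overline{s_{L,M}(\B\otimes\Z)s_{L,M}}$---and you correctly identify the ``block functional calculus'' step as the crux. But your proposed route through that crux contains a genuine gap, and it hinges on a misstatement. You write that $u_n^*Tu_n$ ``does not converge to anything nice in $\Mul(\B)\otimes 1_\Z$,'' and therefore you must fall back on expanding $\hat h$ monomial by monomial and interleaving the $v_\Lambda$ factors using \Cref{lem:May2920215PM} and the standard-chain lemmas. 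In fact the opposite is true: by \Cref{lem:JiangSuMapsStrictExtension}, $u_n^*Tu_n\to T_1\otimes 1_\Z$ strictly, where $T_1=\Phi^{-1}(T)\in\Mul(\B)$, and the paper exploits exactly this. Pulling everything back by $\Phi^{-1}$, the argument of $\hat h$ becomes the pure tensor $(\sum_k r_kT_1)\otimes v_\Lambda = T_1\otimes v_\Lambda$, and then the single scalar identity
\begin{equation*}
  v_\Lambda z_{m,j}^{1/2} = e^{2\pi i\lambda_j} z_{m,j}^{1/2}
\end{equation*}
kills all the interleaving at once: conjugating by $1\otimes z_{m,j}^{1/2}$ gives $(1\otimes z_{m,j}^{1/2})\,\hat h(T_1\otimes v_\Lambda)\,(1\otimes z_{m,j}^{1/2}) = \hat h(e^{2\pi i\lambda_j}T_1)\otimes z_{m,j}$ exactly (not up to error), because powers of $T_1\otimes v_\Lambda$ factor across the tensor. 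This is what makes $y_2\defeq y_1\sum_j 1\otimes z_{m,j}^{1/2}$ work, and it is what your proposal is missing. Without the $\Phi^{-1}$ detour there is no tensor form to exploit, and ``interleaving the $v_\Lambda$ factors'' from inside $a_j=u_n(r_j\otimes v_\Lambda)u_n^*$ is not a well-defined operation---\Cref{lem:May2920215PM} controls the interaction of the $f$'s and $r$'s, not the $\Z$-legs of a non-tensor conjugate.

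Two smaller points. First, your justification of $\Ideal\{A'_1,\ldots,A'_m\}=\Mul(\B)$ ``via $\pi(T)$ having full spectrum $S^1$'' is not the right reason; $U=\pi(T)$ need not have full spectrum. The point is that $\lambda\mapsto\sum_j h_3(e^{2\pi i\lambda_j}\lambda)$ being full in $C(S^1)$ means it is nowhere zero, so $\sum_j h_3(e^{2\pi i\lambda_j}U)$ is invertible in the corona, regardless of $\spec(U)$. Second, the paper's proof does not invoke \Cref{lem:Apr1720216AM} or \Cref{lem:17b} here (those feed into \Cref{lem:PreTechnicalLemma}); the controls you want from them in this lemma are obtained directly from \Cref{lem:16u} (including part~\ref{item:Jun0320213PM}), \Cref{lem:standard-chain-approx-1}, uniform continuity of $\hat h$, and the strict convergences $u_l(\sum_k r_k\otimes 1)u_l^*\to\sum_k s_k$ (in norm, on finite cutoffs) and $u_l(T_1\otimes 1)u_l^*\to T$ (strictly). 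Your choice of $M$ and $N'$ is thus controlled by these convergences and by \Cref{lem:16u}\ref{item:Jun0320213PM}, not by \Cref{lem:Apr1720216AM}\ref{item:Apr1720216AM-part-ii}.
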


\begin{proof}
  For simplicity, we may assume that $\lambda_1 = 0$.

  Let also $K \geq 1$ be a number such that
  \begin{equation*}
    K > \sup\{ \norm{ \widehat{h}(Y) } : Y \in \Mul(\B \otimes \Z) \makebox{ and  }
    \norm{ Y } \leq 3 \}.
  \end{equation*}
  (Note that the operators in $\mathfrak{S}$ all have norm at most $3$.)

  Plug $\widehat{h}, T, \frac{\epsilon}{100}$ into \Cref{lem:16u} to get $N \ge 1$. 
  Fix contractions $a''_1, \ldots, a''_N \in \B \otimes \Z$ such that
  \begin{equation*}
    \dist(a''_j, \her_{\B \otimes \Z}(s_j)) < \epsilon_j.
  \end{equation*}

  By \Cref{lem:16u},  choose $L_1  >  L$ such that for all $l \geq L_1 - 1$,
  \begin{equation}
    (1 - f_l) \widehat{h}\left( \sum_j a_j T \right) \approx_{\frac{\epsilon}{100}} (1 - f_l) \widehat{h}
    \left( \sum_j a'_j T \right)
    \label{equ:Apr1720217AM}
  \end{equation}
  for all $\sum_j a_j, \sum_j a'_j \in \mathfrak{S}$ for which $a_j = a'_j = a''_j$ for all $1 \le j \le N$ and
  \begin{equation*}
    a_j = a'_j \makebox{  for all  } j \geq L.
  \end{equation*}

  Recall that $\widehat{h}(\sum_{j=1}^{\infty} a_j T)$ can be expressed as a polynomial in the variables  $\sum_{j=1}^N a_j T$, $\sum_{j=N+1} a_j T$ and their adjoints.
  Hence, by \autoref{lem:standard-chain-approx-1}, increasing $L_1$ if necessary, we may assume that for all $l \ge L_1$,
  \begin{equation}
    \label{eq:Jun0320212PM}
    (1 - f_l) \widehat{h}\left( \sum_j a_j T \right) \approx_{\frac{\epsilon}{100}} (1 - f_l) \widehat{h}
    \left( \sum_j a'_j T \right)
  \end{equation}
  for all $\sum_j a_j, \sum_j a'_j \in \mathfrak{S}$ for which $a_j = a''_j$, $a'_j = s_j$ for all $1 \le j \le N$ and
  \begin{equation*}
    a_j = a'_j \makebox{  for all  } j \geq L,
  \end{equation*}
  and for which $\sum_{j=N+1}^{\infty} a_j, \sum_{j=N+1}^{\infty} a'_j \in \mathfrak{S}$ are standard.

  (Note that \eqref{equ:Apr1720217AM} and \eqref{eq:Jun0320212PM}, by uniform continuity of $\widehat{h}$, for a fixed finite number of $j > N$, we only need for $a_j,a'_j$ to be sufficiently close to $\her_{\B \otimes \Z}(s_j)$.)

  Let $C_1, ..., C_m \in \overline{(1 - f_{L_1})\Mul(\B \otimes \Z)_+(1 - f_{L_1})}$ be contractive elements such that
  \begin{equation*}
    \pi(C_j) = h_1(e^{2\pi i\lambda_j} U) \makebox{  for all  } 1 \leq j \leq m.
  \end{equation*}

  Now choose $L_2 > L_1$ for which
  \begin{equation*}
    \widehat{h}\left(\sum_{k=1}^{\infty} e^{2\pi i \lambda_j} s_k T \right) (1 - f_{L_2})
    \approx_{\frac{\epsilon}{1000}} C_j (1 - f_{L_2})
  \end{equation*}
  for all $1 \leq j \leq m$.

  Let  $A_1, ..., A_m \in \overline{(1 - e_{L_1}) \Mul(\B)_+(1 - e_{L_1})}$ and
  $T_1 \in \Mul(\B)$ be contractive elements
  such that
  \begin{equation*}
    A_j = \Phi^{-1}(C_j) \makebox{  for all  } 1 \leq j \leq m
  \end{equation*}
  and
  \begin{equation*}
    T_1 = \Phi^{-1}(T).
  \end{equation*}

  Hence,
  \begin{equation*}
    \widehat{h}\left(\sum_{k=1}^{\infty}\lambda_j r_k T_1 \right) (1 - e_{L_2})
    \approx_{\frac{\epsilon}{1000}} A_j (1 - e_{L_2})
    \makebox{  for all   } 1 \leq j \leq m.
  \end{equation*}

  By \Cref{lem:Apr720216AM,lem:Oct1920186AM},
  we can find a contractive element
  $y_1 \in \B \otimes \Z$ with $\norm{ y_1 } \leq 2$ and
  \begin{equation*}
    y_1^* y_1 \in \her_{\B \otimes \Z}((1_{\Mul(\B)} - e_{L_2 + 1}) \otimes 1_{\Z})
  \end{equation*}
  for which

  \begin{equation*}
    y_1 \left( \sum_{j=1}^m A_j \otimes z_{m,j} \right) y_1^*
    \approx_{\frac{\epsilon}{200}} a_0.
  \end{equation*}

  Hence,
  \begin{equation*}
    y_1 \left( \sum_{j=1}^m \widehat{h}\left(\sum_{k=1}^{\infty} e^{2\pi i\lambda_j} r_k T_1 \right)
      \otimes z_{m,j} \right) y_1^* \approx_{\frac{\epsilon}{60}} a_0.
  \end{equation*}

  To simplify notation, let
  \begin{equation*}
    (I) \defeq y_1 \left( \sum_{j=1}^m \widehat{h}\left(\sum_{k=1}^{\infty} e^{2\pi i\lambda_j} r_k T_1 \right)
      \otimes z_{m,j} \right) y_1^*.
  \end{equation*}

  Let
  \begin{equation*}
    \Lambda \defeq (\lambda_1, ..., \lambda_m).
  \end{equation*}
  Note that for all $1 \leq j \leq m$,
  \begin{equation*}
    v_{\Lambda} z^{1/2}_{m,j} = e^{2\pi i\lambda_j} z^{1/2}_{m,j}.
  \end{equation*}

  Hence,
  \begin{align*}
    (I) &=  y_1 \left[ \sum_{j=1}^m  (1 \otimes z^{1/2}_{m,j})
              \widehat{h} \left( \sum_{k=1}^{\infty} e^{2\pi i\lambda_j} r_k T_1  \otimes 1_{\Z}
              \right)  (1 \otimes z^{1/2}_{m,j}) \right] y_1^* \\
        &=  y_1 \left[ \sum_{j=1}^m  (1 \otimes z^{1/2}_{m,j})
              \widehat{h} \left( \sum_{k=1}^{\infty} r_k T_1  \otimes e^{2\pi i\lambda_j} 1_{\Z}
              \right)  (1 \otimes z^{1/2}_{m,j}) \right] y_1^* \\
        &= y_1 \left[ \sum_{j=1}^m  (1 \otimes z^{1/2}_{m,j})
              \widehat{h} \left( \sum_{k=1}^{\infty} r_k T_1  \otimes v_{\Lambda}
              \right)  (1 \otimes z^{1/2}_{m,j}) \right] y_1^*
  \end{align*}

  Now for all $1 \leq j \leq m$,
  $1_{\Mul(\B)} \otimes z^{1/2}_{m,j}$
  commutes with $ \widehat{h} \left( \sum_{k=1}^{\infty} r_k T_1
    \otimes v_{\Lambda}
  \right)$.

  Hence,
  defining
  \begin{equation*}
    y_2 \defeq y_1 \sum_{j=1}^m 1_{\Mul(\B)} \otimes z^{1/2}_{m,j},
  \end{equation*}
  we have that
  \begin{equation*}
    (I) = y_2 \widehat{h} \left(\sum_{k=1}^{\infty} r_k T_1 \otimes v_{\Lambda}
    \right) y_2^*.
  \end{equation*}

  Since $y_2 \in \B \otimes \Z$, we can choose $M_1 > L_2$ so that
  \begin{equation*}
    y_2 \approx_{\frac{\epsilon}{1000K}} y_2 (r_{M_1, L_1} \otimes 1_{\Z}),
  \end{equation*}
  and so that
  \begin{equation*}
    (I) \approx_{\frac{\epsilon}{200}} y_3  \widehat{h}
    \left(\sum_{k=1}^{\infty} r_k T_1 \otimes v_{\Lambda}
    \right) y_3^*,
  \end{equation*}
  where
  \begin{equation*}
    y_3 \defeq
    y_2 (r_{M_1, L_1} \otimes 1_{\Z}).
  \end{equation*}

  Noting that $r_{M_1, L_1} \otimes 1_{\Z}$ commutes with
  $1_{\Mul(\B)} \otimes v_{\Lambda}$,
  we can find
  $M > M_1$ so that
  \begin{equation}
    \label{eq:Jun03202112PM}
    y_3 \widehat{h}\left(\sum_{k=1}^{\infty} r_k T_1 \otimes v_{\Lambda}
    \right) y_3^* \approx_{\frac{\epsilon}{200}}
    y_3 \widehat{h}\left(\sum_{k=1}^{M} r_k T_1 \otimes v_{\Lambda}
    \right) y_3^*.
  \end{equation}
  Increasing $M$ if necessary, we may assume that $M$ satisfies \Cref{lem:16u}\ref{item:Jun0320213PM} (for $y = s_{M_1,L_1}$, $X = T$ and for $\frac{\epsilon}{100}$).
  

  For all $l$, by \eqref{eq:Jun03202112PM},
  \begin{align*}
    (I)
    &\approx_{\frac{\epsilon}{200}}
                                       y_3 u^*_l \widehat{h}\left( u_l(1 \otimes v_{\Lambda})
                                       u^*_l u_l \left(\sum_{k=1}^{\infty} r_k \otimes 1 \right) u^*_l u_l(T_1 \otimes 1)
                                       u_l^* \right) u_l y_3^* \\
    &\approx_{\frac{\epsilon}{100}}
                                       y_3 u^*_l \widehat{h}\left( u_l(1 \otimes v_{\Lambda})
                                       u^*_l u_l \left(\sum_{k=1}^{M} r_k \otimes 1 \right) u^*_l u_l(T_1 \otimes 1)
                                       u_l^* \right) u_l y_3^*.\\
  \end{align*}

  But as $l \rightarrow \infty$,
  \begin{equation*}
    u_l \left( \sum_{k=1}^{M} r_k \otimes 1 \right) u_l^* \rightarrow
    \sum_{k=1}^{M} s_k \makebox{   in norm,}
  \end{equation*}
  and by \Cref{lem:JiangSuMapsStrictExtension},
  \begin{equation*}
    u_l (T_1 \otimes 1) u_l^* \rightarrow T \makebox{  strictly.}
  \end{equation*}
  Since $\widehat{h}$ is uniformly continuous, we can choose $L' \geq L_2$ such that for all $l \geq L'$,
  \begin{align*}
    (I) &\approx_{\frac{\epsilon}{60}} 
                                          y_3 u^*_l \widehat{h}\left( u_l(1 \otimes v_{\Lambda})
                                          u^*_l u_l \left(\sum_{k=1}^{M} r_k \otimes 1 \right) u^*_l
          T \right) u_l y_3^*
  \end{align*}

  Hence, by the definition of (I), for all $l \geq L'$,
  \begin{equation*}
    a_0 \approx_{\frac{\epsilon}{20}}
    y_3 u^*_l \widehat{h}\left( u_l \left(\sum_{k=1}^{M}
        r_k \otimes
        v_{\Lambda} \right) u^*_l
      T \right) u_l y_3^*.
  \end{equation*}

  Note that $\dist(u_l(r_k \otimes v_{\Lambda})u_l^{*}, \her_{\B \otimes \Z}(s_k)) \to 0$ as $l \to \infty$.
  By uniform continuity of $\widehat{h}$ and by \eqref{eq:Jun0320212PM} and the remark following it, there is an $L'' > L'$ such that for all $l \ge L''$, 
  \begin{equation*}
    a_0 \approx_{\frac{3\epsilon}{50}} y_3 u_l^{*} \widehat{h} \left( \sum_{k=1}^{M} a_k T \right) u_l y_3^{*}
  \end{equation*}
  for all $\sum_{k=1}^{M} a_k \in \mathfrak{S}$ for which $a_k = a''_k$ for $1 \le k \le N$ and $a_k = u_l(r_k \otimes v_{\Lambda}) u_l^{*}$ (whose distance to $\her_{\B \otimes \Z}(s_k)$ is sufficiently small, especially very much less than $\epsilon_k$) for $N < k \le M$.

  Note that $y_3 u_l^{*} = y_2 u_l^{*} u_l (r_{M_1,L_1} \otimes 1) u_l^{*}$ and $u_l (r_{M_1,L_1} \otimes 1) u_l^{*} \to s_{M_1,L_1}$ as $l \to \infty$.
  
  Hence by \Cref{lem:16u}\ref{item:Jun0320213PM} and our choice of $M$ we have
  \begin{equation*}
    a_0 \approx_{\epsilon} x \widehat{h}\left( \sum_{k=1}^{\infty} a_k T \right) x^{*} 
  \end{equation*}
  for every $\sum_k a_k \in \mathfrak{S}$ where $a_k = a''_k$ for $1 \le k \le N$ and $a_k = u_l (r_k \otimes v_{\Lambda}) u_l^{*}$ for $L \le k \le M$, where $x = y_3 u_l^{*} s_{M_1,L_1}$ for large enough $l > L''$.
\end{proof}

The following lemma is the analogue of \cite[Lemma~4.6]{LN-2020-IEOT}.
While the proof has some similarities, there are many nontrivial
technical modifications and additions.

\begin{lemma}
  Suppose, in addition, that $\B$ is stably finite.
  Let $\A$ be a unital separable \cstar-algebra and
  $\phi : \A \rightarrow \Mul(\B \otimes \Z)$  a unital *-homomorphism.

  Then there exist a subsequence $\{ n_l \}$ of the positive integers
  and a sequence $\{ v_l \}$ of unitaries in $\Z$ such that
  $\pi(\sum_{l=1}^{\infty} u_{n_l}(r_l \otimes v_l) u_{n_l}^*)$
  is a unitary in the connected component of $1$
  in $(\pi\circ \phi(\A))'$, and
  the unitary
  $\pi(\sum_{l=1}^{\infty} u_{n_l} (r_l \otimes v_l) u_{n_l}^*)U$
  is a strongly full element
  of $\C(\B \otimes \Z)$.
  \label{lem:PreTechnicalLemma}
\end{lemma}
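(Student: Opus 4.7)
I would construct $Z = \sum_l u_{n_l}(r_l \otimes v_l) u_{n_l}^{*}$ by an inductive block construction, partitioning $\nat$ into consecutive ``content blocks'' $B_1, B_2, \ldots$ separated by short bridge segments. Enumerate a countable dense collection of pairs $(\widehat{h}_i, \Lambda_i)$, with $\widehat{h}_i$ a Laurent polynomial $(1/1000)$-close to a continuous $h_{1,i} \in C(S^1)_+$ from a dense nonzero subset, and $\Lambda_i = (\lambda_1, \ldots, \lambda_{m_i}) \in [0,1)^{m_i}$ (together with auxiliary $h_{2,i}, h_{3,i}, \delta_{1,i}$) chosen to satisfy the hypotheses of \Cref{lem:MainTechnicalLemma}, and so that each pair recurs infinitely often. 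At stage $i$: (1) apply \Cref{lem:Apr120213AM} to $\F = \{T_1', \ldots, T_i'\}$ and $\epsilon = 2^{-i}$ to get a threshold $N_i$ past which quasicentralization holds; (2) feed $\widehat{h}_i, \Lambda_i, \epsilon = 2^{-i}$ and a starting index $L$ into \Cref{lem:MainTechnicalLemma} to obtain an endpoint $M_i$ and a large $n \ge N_i$, and set $n_l \defeq n$, $v_l \defeq v_{\Lambda_i}$ on $B_i = [L, M_i]$; (3) insert the preceding bridge: finitely many indices stepping $v_l$ along $v_{\Lambda_{i-1}, t}$ from $v_{\Lambda_{i-1}}$ to $1$ at discretization $<2^{-i}$ (with $n_l$ unchanged from the previous stage), then a single index with $v_l = 1 \in \complex 1_{\Z}$ on which we switch $n_l$ to the new large value, then a symmetric stepping from $1$ up to $v_{\Lambda_i}$.

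\textbf{Unitarity, centrality, homotopy.} Inside content blocks and along the finely discretized bridge paths the ``close neighbors'' branch of \Cref{lem:AUUnitary} applies; on the single $\complex 1_{\Z}$-valued switching index the other branch applies. Hence $\pi(Z)$ is a unitary in $\C(\B \otimes \Z)$. The choice $n_l \ge N_i$ together with \Cref{lem:Apr120213AM} controls the commutator of each block-tail of $Z$ with each $T_j'$ by a summable series, placing $[\pi(Z), \pi \circ \phi(a)] = 0$ for every $a \in \A$, i.e., $\pi(Z) \in \paschkedual{\A}{\B}$. For the homotopy to $1$, replace each $v_l$ by $v_{l,s}$ using the canonical path $v_{\Lambda_i, s} = e^{2\pi i (1-s) h_{\Lambda_i}}$ on content blocks, with rescaled paths on bridges and $v_l = 1$ fixed on the switching index. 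Then $Z_s \defeq \sum_l u_{n_l}(r_l \otimes v_{l,s}) u_{n_l}^{*}$ is at each $s$ a unitary in $\paschkedual{\A}{\B}$ by the same two arguments, and at $s=1$ the difference $Z_1 - \sum_l s_l$ is norm-summable and lies in $\B \otimes \Z$, giving $\pi(Z_1) = 1$.

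\textbf{Strong fullness.} For any nonzero $h_1 \in C(S^1)_+$, pick $i$ with $\widehat{h}_i$ approximating $h_1$ within $\epsilon / 1000$. Inside block $B_i$ the tail $\sum_{l \in B_i} u_n(r_l \otimes v_{\Lambda_i}) u_n^{*}$ matches the hypothesis of \Cref{lem:MainTechnicalLemma} exactly, so for every positive contraction $a_0 \in \B \otimes \Z$ there is $x \in \B \otimes \Z$ with $x\,\widehat{h}_i(ZT)\,x^{*} \approx_\epsilon a_0$ and $x^{*}x \in \her(s_{L, M_i})$. Running through the infinitely many copies of $(\widehat{h}_i, \Lambda_i)$ built into the enumeration, and letting $a_0$ range over positive contractions supported in pairwise disjoint hereditary subalgebras of $\B \otimes \Z$, the hereditary-support control makes the resulting $x_k$'s have pairwise orthogonal supports. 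Strict summation combined with strict comparison (from $\Z$-stability) then forces $d_\tau(\widehat{h}_i(\pi(Z)U)) = \infty$ for every densely defined lower-semicontinuous trace $\tau$ on $\B \otimes \Z$; under the tracial description of ideals of $\C(\B \otimes \Z)$ this is fullness of $\widehat{h}_i(\pi(Z)U)$, hence of $h_1(\pi(Z)U)$, in $\C(\B \otimes \Z)$. Since $h_1$ ranges over a dense nonzero family, $\pi(Z)U$ is strongly full.

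\textbf{Main obstacle.} The most delicate step is the strong-fullness argument: the output of \Cref{lem:MainTechnicalLemma} lives in $\B \otimes \Z$, which is the kernel of $\pi$, so promoting it to fullness of $h_1(\pi(Z)U) \in \C(\B \otimes \Z)$ must go through the hereditary-support control $x^{*}x \in \her(s_{L,M})$ to make iterated outputs tracially independent, and then use strict comparison to turn a tracial-mass statement into an ideal-theoretic one. A secondary difficulty is the bookkeeping between the thresholds $N_i$ from \Cref{lem:Apr120213AM} and the block lengths from \Cref{lem:MainTechnicalLemma}, interlaced with \Cref{lem:AUUnitary}'s ``close neighbors'' condition across all bridges and uniformly in the homotopy parameter $s$: the inductive scheme must be arranged so that none of these three invocations invalidates the others, while also ensuring the homotopy $Z_s$ meets the same three conditions at every $s \in [0,1]$.
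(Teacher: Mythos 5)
Your construction mirrors the paper's in broad outline — the block/bridge inductive scheme interlacing \Cref{lem:Apr120213AM}, \Cref{lem:MainTechnicalLemma}, and \Cref{lem:AUUnitary} is essentially the paper's (*) scaffold, and your unitarity/centrality/homotopy argument is on target.

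The gap is in the strong-fullness step. You propose to conclude fullness of $\widehat{h}_i(\pi(Z)U)$ in $\C(\B \otimes \Z)$ by showing $d_\tau$ of a lift is infinite for all $\tau \in T(\B \otimes \Z)$ and then invoking a ``tracial description of ideals of $\C(\B \otimes \Z)$.'' No such description is available in the generality you need: when $T(\B)$ is an arbitrary metrizable Choquet simplex, the ideal structure of $\Mul(\B\otimes\Z)$ and of its corona is not captured by densely defined traces, and in fact the whole point of routing through $\Z$-stability in this paper is to bypass exactly this (the predecessor \cite{LN-2020-IEOT} had to assume $T(\B)$ has finite extreme boundary precisely so a trace-to-ideal argument could be run). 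Strict comparison here is a property of $\B\otimes\Z$, not of $\Mul(\B\otimes\Z)$ or $\C(\B\otimes\Z)$, and it is used inside \Cref{lem:Apr720216AM} to manufacture the maps $x$, not to translate a tracial-mass statement about a corona element into fullness.

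What the paper does instead is explicitly constructive. Before the induction it fixes a sequence $\{c_k\}$ of pairwise orthogonal positive contractions in $\B\otimes\Z$ with the property that \emph{for every subsequence} $\{k_l\}$ there is a contraction $Y\in\Mul(\B\otimes\Z)$ with $Y\bigl(\sum_l c_{k_l}\bigr)Y^* = 1_{\Mul(\B\otimes\Z)}$ (such a sequence exists because $\B$ is stable). The inductive (*) conditions then force $x_k \widehat{g}_k(WT)x_k^* \approx c_k$ and make the cross terms $x_{k_j}\widehat{g}_k(WT)x_{k_s}^*$ summably small, so $X := \sum_j x_{k_j}$ (strictly convergent, norm $\le 2$ by orthogonality of the $x_{k_j}^*x_{k_j}$) satisfies $X\widehat{g}_k(WT)X^* \approx \sum_j c_{k_j}$, and conjugating by $Y$ gives $\pi(YX)\widehat{g}_k(\pi(W)U)\pi(YX)^* \approx 1_{\C(\B\otimes\Z)}$ with error strictly less than $1$. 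This is a direct witness of fullness — no appeal to traces on the corona. Your ``hereditary-support control'' intuition is the right instinct, but it must be organized through this auxiliary sequence and its operator $Y$; letting $a_0$ ``range over positive contractions in pairwise disjoint hereditary subalgebras'' is not by itself enough, because you still need the disjoint targets to sum to something full in $\Mul(\B\otimes\Z)$ and you need a uniform conjugator to $1$. Your secondary concern about interlacing the three lemmas and keeping the homotopy well-defined is real but is handled correctly in your sketch; the fullness step is the one you would need to repair.
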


\begin{proof}
  For every $n \geq 1$,
  let $h_{n, 1, j}, h_{n, 2, j}, h_{n, 3, j} : S^1 \rightarrow [0,1]$
  be continuous functions, $\lambda_{n,j} \in [0,1)$ (for $1 \leq j \leq n$),
  and
  $\delta_n > 0$ be such that
  \begin{equation*}
    \sum_{j=1}^n h_{n, 3, j}
  \end{equation*}
  is a full element of  $C(S^1)$.
  Hence for every $k$,
  \begin{equation*}
    \sum_{j=1}^n h_{n, 3, j}(e^{2\pi i \lambda_{n,k}} \lambda)
  \end{equation*}
  is a full element of $C(S^1)$.
  \begin{equation*}
    \overline{\osupp(h_{n, 3, j})} \subset \osupp((h_{n, 2, j} - \delta_n)_+),
  \end{equation*}
  \begin{equation*}
    h_{n, 1, j} h_{n, 2, j} = h_{n, 2, j},
  \end{equation*}
  \begin{equation*}
    h_{n,1,j}(\lambda) = h_{n, 1, 1}(e^{2\pi i\lambda_{n,j}} \lambda) \makebox{  for all  } \lambda \in S^1,
  \end{equation*}
  \begin{equation*}
    h_{n, 3, j}(\lambda) = h_{n, 3, 1}(e^{2\pi i\lambda_{n,j}} \lambda)  \makebox{  for all  } \lambda \in S^1,
  \end{equation*}
  there are infinitely many $1 \leq j' \leq n'$ for which
  \begin{equation*}
    h_{n, 1, j} = h_{n', 1, j'}
  \end{equation*}
  for all $1 \leq j \leq n$, and
  \begin{equation*}
    \liminf_{n \rightarrow \infty} \diam(\osupp(h_{n, 1, 1})) = 0.
  \end{equation*}

  In addition, we impose
  the following conditions on the scalars $\lambda_{n,j}$:

  \begin{equation*}
    | \lambda_{n, j+1} - \lambda_{n,j} | < \frac{10 \pi}{n},
  \end{equation*}
  for all $1 \leq j \leq n - 1$.

  And we may assume that for all $n \geq 1$,
  \begin{equation*}
    0 = \lambda_{n,1} < \lambda_{n, 2} < ... < \lambda_{n,n} < 1.
  \end{equation*}

  We denote the above statements by ``(+)".

  Let $\{ g_k \}_{k=1}^{\infty}$ be the sequence of continuous functions
  from $S^1$ to $[0,1]$ and $\{ \Lambda_k \}_{k=1}^{\infty}$ be the sequence
  of vectors (of varying dimensions and with entries in $S^1$) given by
  \begin{equation*}
    g_k = h_{n, 1, j}
  \end{equation*}
  and
  \begin{equation*}
    \Lambda_k \defeq (\lambda_{n,1}, ..., \lambda_{n,n} )
  \end{equation*}
  when
  \begin{equation*}
    k = \frac{(n-1)n}{2} + j \makebox{ and  } 1 \leq j \leq n.
  \end{equation*}
  Note that each term in $\{ g_k \}$ reappears in the sequence infinitely many times.
  Also, for each $k$, we will be considering the unitary
  $v_{\Lambda_k} \in \dimdrop
  \subset \Z$.  (Recall that $v_{\Lambda_k}$ is norm-path-connected to $1$
  via a continuous path of unitaries with length at most $3 \pi$.)

  Let $\{ c_k \}$ be a sequence of pairwise orthogonal contractions in
  $(\B \otimes \Z)_+$ such
  for every subsequence $\{ k_l \}$ of the positive integers, there exists a
  contraction $Y \in \Mul(\B \otimes \Z)$ for which
  \begin{equation*}
    Y\left(\sum_{l=1}^{\infty} c_{k_l} \right) Y^* = 1_{\Mul(\B)}
  \end{equation*}
  where the sum converges strictly.

  Let $\{ \epsilon_{k,l} \}$ be a (decreasing in $k+l$)
  biinfinite sequence in $(0,1)$ such that
  \begin{equation*}
    \sum_{1 \leq k, l < \infty} \epsilon_{k,l} < \infty.
  \end{equation*}
  We may assume that $\epsilon_{k,l} = \epsilon_{l,k}$ for all $k, l$.

  Let $\{ \epsilon_l \}_{l=1}^{\infty}$ be the strictly
  decreasing sequence in $(0,1)$
  from the definition of $\mathfrak{S}$.
  (This is from the fixed notation before \Cref{lem:Apr720216AM}.
  Recall that $\sum_{l=1}^{\infty} \epsilon_l < 1$.)

  Let $S_0 \subset \phi(\A)$ be a countable dense set in the closed unit
  ball of $\phi(\A)_+$.  Say that we have enumerated $S_0$ as
  $S_0 = \{ T_k' \}_{k=1}^{\infty}$.  Recall that by our sectional
  notational conventions (established
  after the proof of \Cref{lem:Apr120213AM}), we have $N_k'$ obtained by plugging $\set{T'_1,\ldots,T'_k}$ into \Cref{lem:Apr120213AM},
  and the sequence $\{ e_n \}$
  quasicentralizes $S \defeq \Phi^{-1}(S_0) = \{ \Phi^{-1}(T_k') \}$
  as in \Cref{lem:QuasicentralAU}.
  Notice that we can increase $N'_k$ and still satisfy the conditions of \Cref{lem:Apr120213AM}.

  By an inductive construction on $k$ (a variable ranging over the positive
  integers), we construct subsequences $\{ n_l \}_{l=1}^{\infty}$,
  $\{ L_k \}_{k=1}^{\infty}$,
  $\{ M_k \}_{k=1}^{\infty}$ and  $\{ N_k \}_{k=1}$ of the integers,
  a sequence $\{ x_k \}_{k=1}^{\infty}$ of elements of $\B \otimes \Z$ with
  at most norm $2$, a sequence
  $\{ \widehat{g}_k \}_{k=1}^{\infty}$ of Laurent polynomials,
  and a sequence $\{ v_l \}_{l=1}^{\infty}$ of unitaries
  in $\dimdrop \subset \Z$.  (The relationship between the variable $l$, in
  $\{ n_l \}$ and $\{ v_l \}$, and the induction variable
  $k$ will be explained below).
  In this construction, we apply (for some lemmas, repeatedly)
  \Cref{lem:AUUnitary,lem:Apr120213AM,lem:17b,lem:MainTechnicalLemma}.  The inductive construction (in $k$) would
  then obtain the following statements:

  \begin{enumerate}
  \item $N_{k-1} < L_k < M_k < N_k' < N_k$ for all $k \geq 1$.
  \item For all $k \geq 1$ and $L_k \leq l \leq M_k$,
    $v_l = v_{\Lambda_k}$. \label{e1}
  \item For all $k \geq 1$, $v_{N_k} = v_{N_k+1} = 1_{\Z}$. \label{e2}
  \item For all $k \geq 1$ and
    $L_k \leq l \leq L_{k+1}$,
    $\norm{ v_{l+1} - v_l } < \epsilon_k$. \label{e3}
  \item For all $k \geq 1$ and for all $N_{k-1} + 1 \leq l \leq N_k$,
    $u_{n_l} = u_{n_{N_{k}}}$. \label{e4}
  \item For all $l \geq 1$,
    $\dist(u_{n_l}(r_l \otimes 1_{\Z}) u_{n_l}^*, s_l)
    < \epsilon_l$. \label{e5}
  \item For all $k \geq 1$, for all $1 \leq j \leq k$,
    and for any contractions $z_l$ for $N_{k-1} + 1 \leq l \leq N_k$,
    \begin{equation*}
      \left[ u_{n_{N_k}} \left( \sum_{l= N_{k-1} + 1}^{N_k}
          (r_l \otimes z_l) \right) u_{n_{N_k}}^*, T_j'
      \right] \approx_{\epsilon_k} 0.
    \end{equation*} \label{e6}
  \item For all $l \geq 1$, each $v_l$ is path-connected to $1_{\Z}$ via
    a norm-continuous path $\{ v_l(t) \}_{t \in [0,1]}$ of unitaries in $\dimdrop$ with length at
    most $3 \pi$. (So $v_l(0) = v_l$ and $v_l(1) = 1_{\Z}$.)
    Moreover, we can choose these paths so that
    for all $k \geq 1$,
    $v_{N_k}(t) = v_{N_k +1}(t) = 1_{\Z}$ for all $t \in [0,1]$. \label{e7}
  \item For all $k_1, k_2 \geq 1$,
    \begin{equation*}
      \widehat{g}_{k_1} = \widehat{g}_{k_2} \makebox{  if and only if  } g_{k_1} = g_{k_2}.
    \end{equation*}
    Moreover,
    \begin{equation*}
      \max_{\lambda \in S^1} |g_k(\lambda) - \widehat{g}_k(\lambda)| <
      \frac{\epsilon_{k_0}}{10000},
    \end{equation*}
    where $k_0$ is the least integer for which $g_k = g_{k_0}$.\label{e10}
  \item For all $k \geq 1$,
    $x_k^* x_k \in \her_{\B \otimes \Z}(s_{L_k, M_k})$,
    $x_k x_k^* \in \her_{\B \otimes \Z}(c_k)$
    and
    \begin{equation*}
      x_k \widehat{g}_k\left(\sum_{j=1}^{\infty} u_{n_l}(r_l \otimes v_l)
        u_{n_l}^* T
      \right) x_k^* \approx_{\frac{\epsilon_{k_0}}{10}} c_k
    \end{equation*}
    where $k_0$ is the least integer for which $g_k = g_{k_0}$.  \label{e8}
  \item For all $k \geq 1$, for all $1 \leq l \leq k-1$,
    \begin{equation*}
      x_k \widehat{g}_k\left(\sum_{j=1}^{\infty}
        u_{n_l}(r_l \otimes v_l) u_{n_l}^* T
      \right) x_l^* \approx_{\epsilon_{k,l}} 0
    \end{equation*}
    and
    \begin{equation*}
      x_l \widehat{g}_k\left(\sum_{j=1}^{\infty} u_{n_l}(r_l \otimes v_l)
        u_{n_l}^* T
      \right) x_k^* \approx_{\epsilon_{k,l}} 0
    \end{equation*} \label{e9}
  \end{enumerate}
  We denote the above statements by ``(*)".

  By \Cref{lem:AUUnitary} and by (*) statements
  \ref{e1}, \ref{e2}, \ref{e3}, \ref{e4} and \ref{e5},
  the sum
  \begin{equation*}
    W \defeq \sum_{l=1}^{\infty} u_{n_l}(r_l \otimes v_l) u_{n_l}^*
  \end{equation*}
  converges in the strict topology on $\Mul(\B \otimes \Z)$,
  and also, $\pi(W)$ is a unitary in $\C(\B \otimes \Z)$.

  By \Cref{lem:Apr120213AM} and by (*) statements
  \ref{e6}, \ref{e7} and \ref{e5},
  $\pi(W) \in \pi(\A)'$, and $\pi(W)$ is path-connected to
  $1_{\C(\B \otimes \Z)}$ via a norm-continuous path of unitaries
  in $\pi(\A)'$.

  Let $k \geq 1$ be given.
  We will now show that $g_k(\pi(W) U)$ is full
  in $\C(\B \otimes \Z)$.
  Let $\frac{1}{10} > \epsilon > 0$ be given.
  Since each term of the sequence $\{g_l\}_{l=1}^{\infty}$ is repeated infinitely many times, we may assume that $k$ is large enough so that

  \begin{equation}
    2\sum_{l \geq k,  n \geq 1} \epsilon_{l, n}
    < \frac{\epsilon}{10}.
    \label{equ:epsilon_ks}
  \end{equation}

  Let $k_0$ be the first integer for which such that $g_{k_0} = g_k$.
  Let $\{ k_j \}_{j=1}^{\infty}$ be a subsequence of the positive
  integers such that
  \begin{equation*}
    k_1 = k
  \end{equation*}
  and
  \begin{equation*}
    g_{k_j} = g_k \makebox{  for all  } j \geq 1.
  \end{equation*}

  For all $j < s$,
  \begin{align*}
    x_{k_s} \widehat{g}_k( W T) x_{k_s}^{*}
    &= x_{k_s} \widehat{g}_{k_s} (WT) x_{k_s}^{*} \\
    &\approx_{\frac{\epsilon_{k_0}}{10}}  c_{k_s} \makebox{  (by (*) statement \ref{e8})}
  \end{align*}
  and
  \begin{align*}
    x_{k_j} \widehat{g}_k( W T) x_{k_s}^{*}
    &= x_{k_j} \widehat{g}_{k_s} (WT) x_{k_s}^{*} \\
    &\approx_{\epsilon_{j,s}} 0 \makebox{  (by (*) statement \ref{e9}).}
  \end{align*}
  Similarly,
  \begin{equation*}
    x_{k_s} \widehat{g}_k( W T) x_{k_j}^{*} \approx_{\epsilon_{j,s}}
    0.
  \end{equation*}

  By (*) statement \ref{e8} and since $\norm{x_n} \le 2$ for every $n$, and since $x_n^{*} x_m = 0$ for every $n \not= m$,
  \begin{equation*}
    X \defeq \sum_{j=1}^{\infty} x_{k_j}
  \end{equation*}
  converges strictly to an element of $\Mul(\B \otimes \Z)$ with
  norm at most $2$.

  Hence, by \eqref{equ:epsilon_ks},
  \begin{align*}
    X \widehat{g}_k(WT) X^*
    &= \sum_{j=1}^{\infty} x_{k_j} \widehat{g}_k(WT) x_{k_j}^*
          + \sum_{j \neq s} x_{k_j} \widehat{g}_k (WT) x_{k_s}^*\\
    &\approx_{\frac{\epsilon_{k_0} + \epsilon}{10}}  \sum_{j=1}^{\infty} c_{k_j}.
  \end{align*}

  But by the definition of $\{ c_l \}_{l=1}^{\infty}$, we can find
  a contraction $Y \in \Mul(\B \otimes \Z)$ for which
  \begin{equation*}
    Y \left( \sum_{j=1}^{\infty} c_{k_j} \right) Y^* = 1_{\Mul(\B \otimes \Z)}.
  \end{equation*}

  Hence,
  \begin{equation*}
    Y X \widehat{g}_k(W T) X^* Y^* \approx_{\frac{\epsilon_{k_0} + \epsilon}{10}}
    1_{\Mul(\B \otimes \Z)}.
  \end{equation*}

  Hence,
  \begin{equation*}
    \pi(YX) \widehat{g}_k(\pi(W) U) \pi(X^* Y^*)
    \approx_{\frac{\epsilon_{k_0} + \epsilon}{10}}
    1_{\C(\B \otimes \Z)}.
  \end{equation*}

  Therefore, since $\norm{ Y X } \leq 2$, and by (*) statement
  \ref{e10},
  \begin{equation*}
    \pi(YX) g_k(\pi(W) U) \pi(X^* Y^*)
    \approx_{\frac{\epsilon_{k_0} + \epsilon}{2}} 1_{\C(\B \otimes \Z)}.
  \end{equation*}
  Since $\epsilon_{k_0}, \epsilon  < 1$,
  $g_k(\pi(W)U)$ is full in $\C(\B \otimes \Z)$.
  Since $k$ was arbitrary, we have shown that for all $k \geq 1$,
  $g_k(\pi(W)U)$ is full in $\C(\B \otimes \Z)$.

  Now, by the definition of the sequence
  $\{ g_k \}$, we claim that the unitary $\pi(W)U$
  is a strongly full element of $\C(\B)$.

  To see this, note that every nonnegative continuous function $f \in C(S^1)$ has some $g_k$ which is in the ideal generated by $f$.
  Indeed, there is some arc of positive width $\eta$ centered at $s \in S^1$ on which $f$ is greater than some $\zeta > 0$.
  Since $\liminf_{n \rightarrow \infty} \max_{1 \le j \le n}
  \diam (\osupp (h_{n,1,j})) = 0$, there is some $n$ such that the maximum of these diameters is less than $\frac{\eta}{3}$.
  Moreover, since $\sum_{j=1}^n h_{n,3,j}$ is full in $C(S^1)$, there is some $1 \le j \le n$ such that $h_{n,1,j}(s) \not= 0$.
  Then, because
  \begin{equation*}
    \diam(\osupp(h_{n,1,j})) < \frac{\eta}{3},
  \end{equation*}
  the support of
  $h_{n,1,j}$ is entirely contained within the arc on which $f \ge \zeta > 0$.
  Therefore $h_{n,1,j}$ is in the ideal generated by $f$.
  Finally, by the definition of $\{ g_k \}_{l=1}^{\infty}$, there is some $k$
  for which $g_k = h_{n,1,j}$ (in fact, there are infinitely many such $k$).
\end{proof}

Having laid the groundwork, we are finally in a position to prove our main theorem concerning the $\ko$-injectivity of the Paschke dual algebra $\paschkedual{\A}{\B}$.\\

\textbf{For the convenience of the reader, we state the full assumptions in our main theorems below, which were standing assumptions starting in the present Subsection~\ref{sec:technical-lemmas}.}\\

\begin{theorem}
  \label{thm:k1-injectivity-simple-nuclear-strict-comparison}
  Suppose that $\A,\B$ are separable simple \cstar-algebras with $\A$ unital and nuclear, and $\B$ stable and $\Z$-stable.

  Then $\paschkedual{\A}{\B}$ is $\ko$-injective.
  Moreover, for each $n \in \nat$, the map
  \begin{equation*}
    U(\mat_n \otimes \paschkedual{\A}{\B})/U(\mat_n \otimes \paschkedual{\A}{\B})_0 \to U(\mat_{2n} \otimes \paschkedual{\A}{\B})/U(\mat_{2n} \otimes \paschkedual{\A}{\B})_0
  \end{equation*}
  given by
  \begin{equation*} [u] \mapsto [u \oplus 1] \end{equation*}
  is injective.
\end{theorem}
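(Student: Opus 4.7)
The plan is to verify the single hypothesis of \Cref{thm:k1-injectivity-generic}: that every unitary in $\paschkedual{\A}{\B}$ is homotopic, inside $\paschkedual{\A}{\B}$, to a unitary which is strongly full in $\C(\B)$. Once this is in place, both the $\ko$-injectivity of $\paschkedual{\A}{\B}$ and the stronger assertion that $[u] \mapsto [u \oplus 1]$ is injective at every matrix level are immediate from the conclusion of that theorem. As a preliminary, I would note that a separable simple stable $\Z$-stable \cstar-algebra has strict comparison (using the standing assumption that quasitraces are traces), and hence the corona factorization property \cite{OPR-2011-TAMS,OPR-2012-IMRN}, so the hypotheses of \Cref{thm:k1-injectivity-generic} are satisfied.

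Next, by the Jiang--Su dichotomy, $\B$ is either purely infinite or stably finite. In the purely infinite case, \Cref{prop:paschke-dual-purely-infinite} gives that $\paschkedual{\A}{\B}$ is itself simple purely infinite, so every nonzero element of it is full in $\C(\B)$ and the strong fullness condition is automatic; alternatively one may simply invoke \Cref{lem:PIK1Injective}. For the stably finite case I would use $\B \cong \B \otimes \Z$ and fix a unital trivial absorbing extension $\phi : \A \to \Mul(\B \otimes \Z)$ realizing $\paschkedual{\A}{\B} = (\pi \circ \phi(\A))'$. Given an arbitrary unitary $U \in \paschkedual{\A}{\B}$, view it as a unitary in $\C(\B \otimes \Z)$ and apply \Cref{lem:PreTechnicalLemma} to produce $W \in \Mul(\B \otimes \Z)$ so that $\pi(W) \in \paschkedual{\A}{\B}$ is a unitary in the connected component of $1$ and $\pi(W)U$ is strongly full in $\C(\B \otimes \Z)$. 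Right-multiplying by $U$ a norm-continuous path of unitaries in $\paschkedual{\A}{\B}$ joining $1$ to $\pi(W)$ yields a norm-continuous path in $\paschkedual{\A}{\B}$ joining $U$ to $\pi(W) U$, so $U$ is homotopic in the Paschke dual to a strongly full unitary of $\C(\B)$, as required.

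At the level of this theorem itself there is no real obstacle: the deduction from \Cref{thm:k1-injectivity-generic} and \Cref{lem:PreTechnicalLemma} is a couple of lines. The main difficulty has already been absorbed into \Cref{lem:PreTechnicalLemma} upstream, whose construction of $W$ as a strict sum $\sum_l u_{n_l}(r_l \otimes v_l) u_{n_l}^{*}$ exploits $\Z$-stability to distribute tracial mass in an extra ``horizontal'' direction along $\Z$ --- the mechanism that removes the finite-extreme-boundary restriction on $T(\B)$ present in \cite{LN-2020-IEOT}.
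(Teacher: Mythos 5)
Your proof is correct and takes essentially the same route as the paper's: the Jiang--Su dichotomy into purely infinite versus stably finite, with the stably finite case resolved by \Cref{lem:PreTechnicalLemma} feeding the strong-fullness hypothesis of \Cref{thm:k1-injectivity-generic}. The only cosmetic difference is that the paper disposes of the purely infinite case by citing \cite[Theorem~2.5]{LN-2020-IEOT} directly, whereas you verify the hypothesis of \Cref{thm:k1-injectivity-generic} uniformly via \Cref{prop:paschke-dual-purely-infinite}; both are fine.
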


\begin{proof}
  Since $\B$ is $\Z$-stable, it is either purely infinite or stably finite.
  The case when $\B$ is purely infinite follows from \cite[Theorem~2.5]{LN-2020-IEOT}.
  So we may assume that $\B$ is stably finite.
  Hence $\B$ also has the corona factorization property since it is $\Z$-stable.
  Then the conclusion follows directly from \Cref{lem:PreTechnicalLemma} and \Cref{thm:k1-injectivity-generic}.
\end{proof}

Now we obtain our primary generalization of the BDF essential codimension result.

\maintheorem

\begin{proof}
  Since $\B$ has strict comparison, $\B$ has the corona factorization property.
  Hence, $\phi$ and $\psi$ are both absorbing extensions.

  By \Cref{thm:k1-injectivity-simple-nuclear-strict-comparison}, $\A_{\B}$ is $\ko$-injective.  Hence, the
  result follows from \Cref{thm:MainUniqueness}.
\end{proof}

\begin{remark}
  We note that \cite[Theorem~2.5]{LN-2020-IEOT} applies whenever $\B$ is simple purely infinite (or $\B = \K$), and in this case $\Z$-stability of $\B$ is not required to obtain the conclusion of \Cref{thm:main-theorem}.
\end{remark}

\subsection{Removing Jiang--Su stability}

From this subsection on, we will not longer follow the standing assumptions and notation that were made in Subsection~\ref{sec:technical-lemmas}.

$\Z$-stability is a natural regularity assumption for nonelementary simple
nuclear \cstar-algebras.  Among other things, it is conjectured that for any
separable simple nuclear \cstar-algebra, having $\Z$-stability is equivalent to
having strict comparison for positive elements.  Unfortunately, $\Z$-stability
is no longer prominent outside of the nuclear case.  For instance,
$C^*_r(\mathbb{F}_{\infty})$ is an example of a simple (nonnuclear) \cstar-algebra
with strict comparison that is not $\Z$-stable.

Our present techniques allow for the replacement of $\Z$-stability with
strict comparison, but with the
additional restriction of finitely many extreme traces.  This is still
interesting as $C^*_r(\mathbb{F}_{\infty})$ and a number of similar \cstar-algebras
have a unique tracial state.

\begin{lemma}
  Let $\B$ be a nonunital separable simple \cstar-algebra.

  Suppose that
  $\{ e_n \}_{n=1}^{\infty}$ is an approximate unit for
  $\B$ such that
  \begin{equation*}
    e_{n+1} e_n = e_n \makebox{  for all  } n\geq 1.
  \end{equation*}

  Suppose that $\{ \alpha_n \}_{n=1}^{\infty}$ is a sequence in
  $S^1$ such that for all $\epsilon > 0$, there exists an $N \geq 1$
  for which
  \begin{equation*}
    | \alpha_{n+1} - \alpha_n | < \epsilon  \makebox{  for all  }
    n \geq N.
  \end{equation*}

  Then $\pi\left( \sum_{n=1}^{\infty} \alpha_n (e_n - e_{n-1}) \right)$
  is a unitary.
  (We use the convention $e_0 \defeq 0$.)
  \label{lem:Apr2720216AM}
\end{lemma}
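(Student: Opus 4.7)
The plan is to obtain this lemma as an immediate specialization of \Cref{lem:AUUnitary}. Apply that lemma with $\E \defeq \complex$, $f_k \defeq e_k$, $u_k \defeq 1_{\Mul(\B)}$, $z_k \defeq \alpha_k \in S^1 \subset \complex$, and any summable sequence $\{\epsilon_k\} \subset (0,1)$ (e.g.\ $\epsilon_k = 2^{-k}$). Under this specialization, $\B \otimes \E = \B$ and the approximation hypothesis $u_k(e_j \otimes 1) u_k^* \approx_{\epsilon_k} f_j$ becomes the trivial equality $e_j = e_j$. The ``close neighbors'' hypothesis of \Cref{lem:AUUnitary} reduces to: for every $\epsilon > 0$ there exists $N$ with $u_k = u_{k+1}$ (automatic) and $|\alpha_{k+1} - \alpha_k| < \epsilon$ for all $k \ge N$, which is exactly our standing hypothesis. \Cref{lem:AUUnitary} then yields that $\pi\bigl(\sum_{k=1}^\infty \alpha_k(e_k - e_{k-1})\bigr)$ is a unitary in $\C(\B)$.

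Alternatively, one can give a direct short proof tailored to this scalar special case, which is instructive. Set $r_n \defeq e_n - e_{n-1}$ and $z \defeq \sum_n \alpha_n r_n$; the sum converges strictly in $\Mul(\B)$ since $\{e_n\}$ is an approximate unit and $|\alpha_n| = 1$. The hypothesis $e_{n+1} e_n = e_n$, together with positivity of $e_n$, gives by induction that $e_k e_n = e_k = e_n e_k$ whenever $k \le n$; hence the $r_n$ pairwise commute and satisfy $r_n r_m = 0$ whenever $|n-m| \ge 2$. Expanding the strict identity $(\sum_n r_n)^2 = 1$ produces $\sum_n r_n^2 + \sum_n(r_n r_{n+1} + r_{n+1} r_n) = 1$, while a similar expansion gives
\begin{equation*}
  z z^* = \sum_n r_n^2 + \sum_n \bigl( \alpha_n \overline{\alpha_{n+1}} r_n r_{n+1} + \alpha_{n+1}\overline{\alpha_n} r_{n+1} r_n \bigr),
\end{equation*}
so after subtracting,
\begin{equation*}
  zz^* - 1 = \sum_n (\alpha_n\overline{\alpha_{n+1}} - 1) r_n r_{n+1} + (\alpha_{n+1}\overline{\alpha_n} - 1) r_{n+1} r_n.
\end{equation*}

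To see this lies in $\B$, note that $|\alpha_n\overline{\alpha_{n+1}} - 1| = |\alpha_n - \alpha_{n+1}|$, so the tail norms are controlled by the Cauchy-type hypothesis. The remaining point is the orthogonality structure: commutativity of the $r_n$ together with $r_n r_m = 0$ for $|n-m| \ge 2$ shows that the products $r_n r_{n+1}$ and $r_m r_{m+1}$ kill each other (on both sides) whenever $n \ne m$, so splitting the above tail sum by parity of $n$ presents it as two norm-convergent sums of pairwise orthogonal terms, each with norm at most $\sup_{n \ge N}|\alpha_{n+1} - \alpha_n|$. Hence $zz^* - 1$ is a norm limit of finite sums from $\B$, and the same argument applied to $z^*z$ shows $\pi(z)$ is unitary. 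The only real bookkeeping obstacle is justifying the expansion in the strict topology, which is handled cleanly by the commutativity and the three-step orthogonality of the $r_n$'s.
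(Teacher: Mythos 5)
Your first paragraph is exactly the paper's proof: the paper notes the lemma follows by applying \Cref{lem:AUUnitary} with $\E = \complex$ and $u_k = 1$ for all $k$, which is precisely your specialization. Your alternative direct argument is also sound (and is what the paper's remark ``in fact easier than \Cref{lem:AUUnitary}'' alludes to) --- the orthogonality $r_n r_{n+1}\cdot r_m r_{m+1} = 0$ for $n \neq m$ does hold by commutativity and three-step disjointness, so the parity split, while harmless, is not even needed here.
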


\begin{proof}
  The proof is similar to (in fact easier than) that of \Cref{lem:AUUnitary}.
  However, one can also just apply \Cref{lem:AUUnitary} by taking $\E = \complex$ and $u_k = 1$ for all $k$.
\end{proof}

\begin{lemma}
  Let $\B$ be a separable simple stable stably finite \cstar-algebra with strict
  comparison of positive elements such that
  $T(\B)$ has finitely many extreme points, and let $\A$ be a unital separable
  nuclear \cstar-algebra.
  Let $\phi : \A \rightarrow \Mul(\B)$ be a unital *-homomorphism and
  let $S \defeq \{ T_k : k \geq 1\}$ be a countable dense subset of
  the closed unit ball of $\phi(\A)$.

  Let $\{ e_n \}$ be an approximate unit for $\B$ which quasicentralizes
  $\phi(\A)$ so that
  \begin{equation*}
    e_{n+1} e_n = e_n \makebox{  for all  } n \geq 1
  \end{equation*}
  and
  \begin{equation*}
    \norm{ e_n T_j - T_j e_n } < \frac{1}{2^n} \makebox{  for all  } 1 \leq
    j \leq n.
  \end{equation*}

  Let $U \in \pi \circ \phi(\A)'$ be a unitary.  Then there exists a sequence
  $\{ \alpha_n \}$ in $S^1$ such that
  $\pi\left( \sum_{n=1}^{\infty} \alpha_n(e_n - e_{n-1}) \right)$
  is a unitary in
  the connected component of $1$ in $\pi \circ \phi(\A)'$, and
  the unitary $\pi\left(\sum_{n=1}^{\infty}
    \alpha_n (e_n - e_{n-1})\right) U$ is a strongly full element of
  $\C(\B)$.
  \label{lem:Apr2720217AM}
\end{lemma}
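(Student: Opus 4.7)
The approach is to carry out the non--Jiang--Su-stable analogue of \Cref{lem:PreTechnicalLemma}, using the finite extreme boundary of $T(\B)$ in place of $\Z$-stability. The operator $W$ we construct will be diagonal, $W = \sum_{n=1}^{\infty} \alpha_n (e_n - e_{n-1})$, which simplifies the setup significantly: \Cref{lem:Apr2720216AM} gives unitarity of $\pi(W)$ as soon as $|\alpha_{n+1} - \alpha_n| \to 0$, \Cref{lem:QuasicentralAU} gives $\pi(W) \in \pi \circ \phi(\A)'$ for any bounded sequence, and inserting indices $N_k$ with $\alpha_{N_k} = 1$ at which the path can be ``cut'' provides the homotopy of $\pi(W)$ to $1$ inside $\pi \circ \phi(\A)'$ by continuously deforming each $\alpha_n$ along the short arc of $S^1$ to $1$.

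The core work is arranging strong fullness of $\pi(W) U$. Fix a contractive lift $T \in \Mul(\B)$ of $U$. Choose families of continuous functions $h_{n,1,j}, h_{n,2,j}, h_{n,3,j} : S^1 \to [0,1]$ and angles $\lambda_{n,j} \in [0,1)$ with the same properties denoted $(+)$ in the proof of \Cref{lem:PreTechnicalLemma} (in particular, $\sum_{j=1}^n h_{n,3,j}$ full in $C(S^1)$, $\overline{\osupp h_{n,3,j}} \subset \osupp((h_{n,2,j}-\delta_n)_+)$, $h_{n,1,j} h_{n,2,j} = h_{n,2,j}$, and $\liminf_n \diam(\osupp h_{n,1,1}) = 0$). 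Enumerate these as a single sequence $\{g_k\}$, with each $h_{n,1,j}$ repeating infinitely often, and associate tuples $\Lambda_k = (\lambda_{n,1}, \dots, \lambda_{n,n})$. Pick Laurent polynomial approximants $\widehat{g}_k$ uniformly close to $g_k$ on $S^1$, and pick pairwise orthogonal contractions $c_k \in \B_+$ such that any strict subsum $\sum_j c_{k_j}$ can be compressed by a contraction to $1_{\Mul(\B)}$.

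Inductively on $k$, build indices $N_{k-1} < L_k < M_k < N_k$ and scalars $\alpha_n \in S^1$ so that on each block $[L_k, M_k]$ the $\alpha_n$ cycle slowly through the roots $\{e^{2\pi i \lambda_{n_k,j}}\}$ (with consecutive differences at most $\epsilon_k$ so the union over $k$ satisfies the close-neighbors condition of \Cref{lem:Apr2720216AM}), and with $\alpha_n = 1$ near each $N_k$. Simultaneously construct $x_k \in \B$ with $\|x_k\| \leq 2$, $x_k^{*} x_k \in \her(e_{M_k} - e_{L_k - 1})$, $x_k x_k^{*} \in \her(c_k)$, satisfying
\begin{equation*}
  x_k \, \widehat{g}_k\!\left(\sum_{n=1}^{\infty} \alpha_n (e_n - e_{n-1}) \, T\right) x_k^{*} \approx c_k,
\end{equation*}
together with the analogue cross-term smallness conditions (as in (*)\ref{e9} of \Cref{lem:PreTechnicalLemma}). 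Summing the $x_k$'s strictly along a subsequence $k_j$ with $g_{k_j} = g_k$, the orthogonality of supports gives $X \widehat{g}_k(WT) X^{*} \approx \sum_j c_{k_j}$, and conjugating by the compression witness yields $\pi$-image close to $1_{\C(\B)}$. Hence $g_k(\pi(W) U)$ is full for every $k$, and the diameter-shrinking property of $\osupp h_{n,1,j}$ shows that the ideal generated by any nonzero $f \in C(S^1)_+$ already contains some $g_k$, so $\pi(W) U$ is strongly full.

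The main obstacle is the inductive construction of $x_k$, i.e., the analogue of \Cref{lem:Apr720216AM,lem:MainTechnicalLemma} without invoking $\Z$-stability. The substitute input is the hypothesis that $T(\B)$ has finitely many extreme points: for the finitely many extreme traces $\tau_1, \dots, \tau_m$, choose $n_k$ and the $\lambda_{n_k,j}$ so that the telescoped spectral mass of $\sum_{n=L_k}^{M_k} \alpha_n (e_n - e_{n-1}) T$ distributes over $S^1$ with $d_{\tau_i}\big(\widehat{g}_k(\cdot)\big) = \infty$ for every $i$; since there are only finitely many such $\tau_i$, this can be achieved by choosing $L_k, M_k$ large enough (using quasicentrality of $\{e_n\}$ for $\phi(\A)$ and density of $\phi(\A)$ in $\pi \circ \phi(\A)' \cap \Mul(\B)$-level computations). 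Then strict comparison of positive elements of $\B$ gives $c_k$ Cuntz-below the truncated hereditary subalgebra, and a polar-decomposition/cut-down argument (using \Cref{lem:Oct1920186AM} to deal with the fact that $e_n$ are not projections) produces the required $x_k$. Everything else is bookkeeping along the lines of \cite[Lemma~4.6]{LN-2020-IEOT}.
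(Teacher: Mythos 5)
Your proposal is correct and follows essentially the same route as the paper: the paper's proof is a pointer to \cite[Lemmas~4.4--4.7]{LN-2020-IEOT} with $p_j$ replaced by the positive elements $e_j - e_{j-1}$ (and $e_{L_1}^{\perp}$ by $1 - e_{L_1}$, etc.), and your sketch reconstructs precisely that argument by viewing it as the diagonal, finite-extreme-boundary analogue of \Cref{lem:PreTechnicalLemma}, with \Cref{lem:Apr2720216AM} and \Cref{lem:QuasicentralAU} handling unitarity and commutation for non-projection $e_n$, \Cref{lem:Oct1920186AM} in place of the $\Z$-stable compression lemma, and mass distribution over the finitely many extreme traces replacing the $\dimdrop$-unitaries $v_{\Lambda}$.
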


\begin{proof}
  The proof is exactly the same as that of \cite[Lemmas~4.6 and 4.7]{LN-2020-IEOT}
   (and hence, we need to replicate \cite[Lemmas~4.4 and
  4.5]{LN-2020-IEOT}).  The main difference, is that, we now do not need $\{ e_n \}$ to
  be a sequence of projections and we do not need $\phi$ to have a special
  form, since we can use (our paper) \Cref{lem:Apr2720216AM,lem:QuasicentralAU} to ensure that
  $\pi\left(\sum_{n=1}^{\infty} \alpha_n (e_n - e_{n-1})\right)$ is
  a unitary in $\pi \circ \phi(\A)'$.  E.g., thus,
  in \cite[Lemma~4.6]{LN-2020-IEOT}
  (and Lemmas 4.4 and 4.5), we replace the (pairwise orthogonal)
  projections $p_j$ with the (not necessarily pairwise orthogonal) positive
  elements $e_j - e_{j-1}$.

  We also make other obvious modifications.  E.g., we replace
  the projection $e_{L_1}^{\perp}$ in \cite[Lemma~4.4]{LN-2020-IEOT} with
  (our paper's) the positive element $1 - e_{L_1}$, and we replace
  the projection
  $p$ in \cite[Lemma~4.5]{LN-2020-IEOT} with a positive element.  With these and other
  trivial and straightforward modifications, the proofs all follow through
  smoothly, because of the considerations of the first paragraph.
\end{proof}

\begin{theorem}
  Suppose that $\A,\B$ are separable simple \cstar-algebras with $\A$ unital and nuclear, $\B$ stable and stably finite and having strict comparison of positive elements, and
  $T(\B)$ having finitely many extreme points.

  Then $\paschkedual{\A}{\B}$ is $\ko$-injective.
  Moreover, for each $n \in \nat$, the map
  \begin{equation*}
    U(\mat_n \otimes \paschkedual{\A}{\B})/U(\mat_n \otimes \paschkedual{\A}{\B})_0 \to U(\mat_{2n} \otimes \paschkedual{\A}{\B})/U(\mat_{2n} \otimes \paschkedual{\A}{\B})_0
  \end{equation*}
  given by
  \begin{equation*} [u] \mapsto [u \oplus 1] \end{equation*}
  is injective.
  \label{thm:Apr2720218AM}
\end{theorem}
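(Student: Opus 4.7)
The plan is to reduce to Theorem~\ref{thm:k1-injectivity-generic} (the abstract $\ko$-injectivity criterion) and supply its hypothesis using Lemma~\ref{lem:Apr2720217AM}, which is the technical heart of this subsection.

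First I would record that $\B$ has the corona factorization property: since $\B$ is a separable simple stable \cstar-algebra with strict comparison of positive elements, this is by the results of Ortega--Perera--R\o rdam (\cite{OPR-2011-TAMS,OPR-2012-IMRN}) cited in the remark preceding Theorem~\ref{thm:k1-injectivity-generic}. Thus the standing hypotheses on $\B$ in Theorem~\ref{thm:k1-injectivity-generic} are met, and it suffices to show that every unitary in $\paschkedual{\A}{\B}$ is homotopic in $\paschkedual{\A}{\B}$ to a unitary which is strongly full in $\C(\B)$.

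Next, fix a unital absorbing trivial extension $\phi : \A \rightarrow \Mul(\B)$, so that $\paschkedual{\A}{\B} = (\pi \circ \phi(\A))'$. Let $S = \{T_k\}_{k \geq 1}$ be a countable dense subset of the closed unit ball of $\phi(\A)$. Using \cite[Theorem~2.2]{Ped-1990} (equivalently \Cref{lem:QuasicentralAU}), choose an approximate unit $\{e_n\}$ of $\B$ with $e_{n+1} e_n = e_n$ and $\norm{e_n T_j - T_j e_n} < 1/2^n$ for $1 \leq j \leq n$. Given any unitary $U \in \paschkedual{\A}{\B}$, apply \Cref{lem:Apr2720217AM} to obtain a sequence $\{\alpha_n\} \subseteq S^1$ such that the element
\begin{equation*}
  V \defeq \pi\!\left( \sum_{n=1}^{\infty} \alpha_n (e_n - e_{n-1}) \right)
\end{equation*}
is a unitary in the connected component of $1$ in $\paschkedual{\A}{\B}$, and the product $VU$ is strongly full in $\C(\B)$. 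Since $V \sim_h 1$ in $\paschkedual{\A}{\B}$, it follows that $U \sim_h VU$ in $\paschkedual{\A}{\B}$, and $VU$ is the required strongly full unitary.

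Finally, invoking \Cref{thm:k1-injectivity-generic} yields both the $\ko$-injectivity of $\paschkedual{\A}{\B}$ and the ``moreover'' statement about injectivity of the stabilization map $[u] \mapsto [u \oplus 1]$ at each level $n$. There is no real obstacle to overcome here: all the technical difficulty has been absorbed into \Cref{lem:Apr2720217AM}, where the finite-extreme-boundary hypothesis on $T(\B)$ takes over the role that Jiang--Su stability played in the proof of \Cref{thm:k1-injectivity-simple-nuclear-strict-comparison}. Intuitively, without $\Z$-stability one loses the ``horizontal direction'' in which to distribute tracial mass (realized in the previous subsection via the unitaries $z_k$ in the Jiang--Su factor), and the compensating assumption that $T(\B)$ has finite extreme boundary is precisely what permits distributing tracial mass only ``vertically'' through the scalars $\alpha_n$ while still achieving strong fullness of $VU$.
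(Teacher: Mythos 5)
Your proof is correct and takes essentially the same approach as the paper's: the paper's own proof simply says to repeat the argument of Theorem~\ref{thm:k1-injectivity-simple-nuclear-strict-comparison} with Lemma~\ref{lem:Apr2720217AM} substituted for Lemma~\ref{lem:PreTechnicalLemma}, and you have spelled out exactly those steps (corona factorization property via strict comparison and Ortega--Perera--R\o rdam, then Lemma~\ref{lem:Apr2720217AM} supplying the strongly-full-after-homotopy hypothesis, then Theorem~\ref{thm:k1-injectivity-generic}).
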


\begin{proof}
  The proof is exactly the same as that of 
  \Cref{thm:k1-injectivity-simple-nuclear-strict-comparison},
  except that we replace \Cref{lem:PreTechnicalLemma}
  with \Cref{lem:Apr2720217AM}.
\end{proof}

\begin{theorem}
  \label{thm:uniqueness-finite-extreme-boundary}
  Let $\A$, $\B$ be separable simple \cstar-algebras, with $\A$ unital and nuclear,
  $\B$ stable and stably finite and having strict comparison for positive elements, and $T(\B)$
  having finitely many extreme points.

  Let $\phi, \psi : \A \rightarrow \Mul(\B)$ be two unital full trivial extensions with
  $\phi(a) - \psi(a) \in \B$ for all $a \in \A$.

  Then $[\phi, \psi] = 0$ in $\kk(\A, \B)$ if and only if $\phi \approxeq \psi$.
\end{theorem}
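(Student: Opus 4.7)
The plan is to directly combine the $\ko$-injectivity result \Cref{thm:Apr2720218AM} with the general uniqueness machinery \Cref{thm:MainUniqueness}, precisely mirroring the strategy used in the proof of \Cref{thm:main-theorem}.

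First, I would verify the absorption hypothesis needed to invoke \Cref{thm:MainUniqueness}. Since $\B$ is a separable simple stable \cstar-algebra with strict comparison of positive elements, $\B$ has the corona factorization property (as noted in the discussion preceding \Cref{thm:k1-injectivity-generic}, with reference to \cite{OPR-2011-TAMS,OPR-2012-IMRN}). Since $\A$ is unital, separable, and nuclear, and $\phi,\psi$ are unital full extensions of $\B$ by $\A$, it follows from \cite{KN-2006-HJM} that $\phi$ and $\psi$ are both unital absorbing trivial extensions. Moreover, the Paschke dual algebra can be realized as $\paschkedual{\A}{\B} = (\pi \circ \phi(\A))' = (\pi \circ \psi(\A))'$ via conjugation by the unitary implementing $\phi \sim \phi \oplus \psi \sim \psi$.

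Next, I would apply \Cref{thm:Apr2720218AM} to conclude that $\paschkedual{\A}{\B}$ is $\ko$-injective; this is exactly where the hypotheses of strict comparison and finite extreme boundary of $T(\B)$ are used (replacing the $\Z$-stability hypothesis used in \Cref{thm:k1-injectivity-simple-nuclear-strict-comparison}).

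Finally, with $\A$ separable nuclear, $\B$ separable stable, $\A$ unital, $\paschkedual{\A}{\B}$ verified $\ko$-injective, and $\phi,\psi : \A \to \Mul(\B)$ unital absorbing trivial extensions satisfying $\phi(a) - \psi(a) \in \B$ for all $a \in \A$, all the hypotheses of \Cref{thm:MainUniqueness} are met. Invoking that theorem yields $[\phi,\psi] = 0$ in $\kk(\A,\B)$ if and only if $\phi \paue \psi$, which is the desired conclusion.

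There is essentially no obstacle in this proof since both of the substantive ingredients --- the $\ko$-injectivity of $\paschkedual{\A}{\B}$ and the implication from $\ko$-injectivity to asymptotic unitary equivalence --- have already been established earlier in the paper. The only minor bookkeeping is noting that strict comparison yields the corona factorization property, which in turn guarantees that unital full trivial extensions are absorbing. This lets us pass from the hypothesis that $\phi,\psi$ are unital full trivial extensions to the hypothesis, required by \Cref{thm:MainUniqueness}, that they are unital absorbing trivial extensions.
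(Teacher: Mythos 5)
Your proposal is correct and follows essentially the same approach as the paper's own proof, which simply notes that the argument of \Cref{thm:main-theorem} goes through verbatim once \Cref{thm:k1-injectivity-simple-nuclear-strict-comparison} is replaced by \Cref{thm:Apr2720218AM}. Your added detail on the corona factorization property and absorption matches what the paper already uses in the proof of \Cref{thm:main-theorem}.
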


\begin{proof}
  The proof is exactly the same as that of \Cref{thm:main-theorem},
  except that \Cref{thm:k1-injectivity-simple-nuclear-strict-comparison}
  is replaced with \Cref{thm:Apr2720218AM}.
\end{proof}

\section*{Appendix}

\begin{lemma}
  If $\B$ is a $\sigma$-unital
  stable \cstar-algebra, then $\C(\B)$ is $\ko$-injective.
  \label{lem:C(B)Injective}
\end{lemma}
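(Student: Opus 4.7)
The plan is to invoke the six-term exact sequence in $\k$-theory for the extension $0 \to \B \to \Mul(\B) \to \C(\B) \to 0$, combined with two standard facts about $\Mul(\B)$ when $\B$ is $\sigma$-unital and stable: (i) $\ko(\Mul(\B)) = 0$, so the index map $\idx \colon \ko(\C(\B)) \to \kz(\B)$ is injective; and (ii) the unitary group $\mathcal{U}(\Mul(\B))$ is path-connected (the stable multiplier analogue of Kuiper's theorem). Both rely on the stability of $\B$, which supplies a sequence $\{s_n\} \subset \Mul(\B)$ of pairwise orthogonal isometries with $\sum_n s_n s_n^{*} = 1$ converging strictly.

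Given $u \in \mathcal{U}(\C(\B))$ with $[u] = 0$ in $\ko(\C(\B))$, I would first lift $u$ to a partial isometry $V \in \Mul(\B)$ such that $p \defeq 1 - V^{*}V$ and $q \defeq 1 - VV^{*}$ are projections in $\B$. This is routine: any contractive lift $X$ of $u$ satisfies $X^{*}X - 1, XX^{*} - 1 \in \B$, and applying continuous functional calculus to $X^{*}X$ to extract the polar part of $X$ yields such a $V$.

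By the standard formula for the index, $\idx[u] = [p] - [q]$ in $\kz(\B)$, and injectivity of $\idx$ forces $[p] = [q]$. Stability of $\B$ (together with $\sigma$-unitality) upgrades this $\kz$-equivalence to Murray--von Neumann equivalence, providing a partial isometry $W \in \B$ with $W^{*}W = p$ and $WW^{*} = q$. Since $W = (1 - VV^{*}) W (1 - V^{*}V)$, the cross-terms $V^{*}W$ and $WV^{*}$ vanish, and a direct computation shows that $U \defeq V + W$ is a unitary in $\Mul(\B)$ with $\pi(U) = u$. Applying (ii), $U \sim_h 1$ in $\Mul(\B)$, and pushing the homotopy through $\pi$ gives $u \sim_h 1$ in $\C(\B)$, as required.

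The main obstacle is fact (ii). Fortunately, for this application only path-connectedness (not full contractibility) is required, and this can be obtained elementarily via an Eilenberg swindle using the isometries $\{s_n\}$ together with Whitehead's rotation trick applied under the canonical unital isomorphism $\mat_2(\Mul(\B)) \cong \Mul(\B)$ coming from stability; fact (i) then drops out as a byproduct, since any unitary in $\mat_n(\Mul(\B))$ can be realized as a unitary in $\Mul(\B)$ via the same isomorphism.
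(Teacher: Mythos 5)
Your proposal follows a genuinely different route from the paper's, which instead shows that below any two full properly infinite projections in $\C(\B)$ one can find homotopic full properly infinite subprojections, and then cites \cite[Proposition~5.1]{BRR-2008-JNG} to conclude $\ko$-injectivity — a structural argument that never computes anything in $\k$-theory. Your six-term-sequence approach is natural, but it has two genuine gaps, and the first is serious enough that the argument cannot be repaired in the way you suggest.

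\textbf{Gap 1 (the lift).} You claim it is ``routine'' to lift $u$ to a partial isometry $V \in \Mul(\B)$ with $1 - V^{*}V$ and $1 - VV^{*}$ \emph{projections} in $\B$, by taking a contractive lift $X$ and applying continuous functional calculus to $X^{*}X$. If $0 \in \operatorname{sp}(X^{*}X)$ this simply does not work: the support projection of $X^{*}X$ lives in the bidual, not in $\Mul(\B)$, and continuous functional calculus alone cannot produce a projection $1 - V^{*}V$ lying in $\B$. In the Calkin case ($\B = \K$) this is rescued by the discreteness of $\operatorname{sp}(X^{*}X)$ away from $1$, but for general stable $\B$ nothing forces $0$ to be isolated. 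Worse, $\B$ may have \emph{no nonzero projections at all} — take $\B = C_0(\real)\otimes\K$, which is separable, stable and projectionless — and then any lift of the required form must have $1-V^{*}V = 1-VV^{*} = 0$, i.e.\ $V$ must be a unitary in $\Mul(\B)$. But a unitary $u \in \C(\B)$ lifts to a unitary in $\Mul(\B)$ if and only if $u \in \mathcal{U}_0(\C(\B))$ (lift a path; conversely $\mathcal{U}(\Mul(\B))$ is connected). So in this example the existence of the lift you want is exactly equivalent to the conclusion you are trying to prove, and the argument is circular.

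\textbf{Gap 2 (cancellation).} You assert that stability of $\B$ ``upgrades this $\kz$-equivalence to Murray--von Neumann equivalence.'' This is false: stable $\sigma$-unital \cstar-algebras do not in general have cancellation. For example, with $\B = C(S^5)\otimes\K$ the trivial and nontrivial rank-two bundles over $S^5$ (classified by $\pi_4(U(2)) \cong \ints/2$) give projections $p,q \in \B$ with $[p] = [q]$ in $\kz(\B) \cong K^0(S^5) \cong \ints$ but $p \not\sim q$. The extra constraint $1-p \sim 1-q$ in $\Mul(\B)$ that your defect projections would satisfy does not help here, since $\kz(\Mul(\B)) = 0$ makes that equivalence automatic for any such pair. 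Thus even granting the lift, the step $[p]=[q] \Rightarrow p \sim q$ fails, and with it the construction of the unitary lift $U = V + W$. The paper avoids both problems entirely by working with properly infinite projections in $\C(\B)$ and the Blanchard--Rohde--R\o{}rdam criterion rather than with the index map and lifts.
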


\begin{proof}
  Let $p, q \in \C(\B)$ be full properly infinite projections.
  Hence, let $x \in \C(\B)$ be such that
  \begin{equation*}
    x p x^* = 1_{\C(\B)}.
  \end{equation*}
  Let $A \in \Mul(\B)_+$ such that $\pi(A) = p$.
  Since $\B$ is stable, there exists $Y \in \Mul(\B)$ such that
  \begin{equation*}
    Y A Y^* = 1_{\Mul(\B)}.
  \end{equation*}

  Hence, there exists a projection $R \in \overline{A \Mul(\B) A}$ for which
  \begin{equation*}
    R \sim 1 \makebox{  and  } 1 - R \sim 1.
  \end{equation*}

  Similarly, there exists $A_1 \in \Mul(\B)_+$
  with
  \begin{equation*}
    \pi(A_1) = q
  \end{equation*}
  and there exists a projection $S \in \overline{A_1 \Mul(\B) A_1}$ such that
  \begin{equation*}
    S \sim 1 \makebox{  and  } 1 - S \sim 1.
  \end{equation*}

  Hence, there exists a unitary $U \in \Mul(\B)$ for which
  \begin{equation*}
    U R U^* = S.
  \end{equation*}
  Since $\B$ is stable, $\mathcal{U}(\Mul(\B))$ is (norm) contractible, so
  \begin{equation*}
    R \sim_h S.
  \end{equation*}
  Then
  \begin{equation*}
    r \defeq \pi(R) \makebox{  and  } s \defeq \pi(S)
  \end{equation*}
  are full properly infinite
  projections in $\C(\B)$ with $r \leq p$, $s \leq q$, and
  \begin{equation*}
    r \sim_h s.
  \end{equation*}
  Since $p, q$ were arbitrary, it follows, by \cite[Proposition~5.1]{BRR-2008-JNG}, that $\C(\B)$ is $\ko$-injective.
\end{proof}

\bibliographystyle{amsalpha}
\bibliography{references.bib}

\end{document}